\newtheorem{theorem}{Theorem}[section]
\newtheorem{lemma}[theorem]{Lemma}
\newtheorem{proposition}[theorem]{Proposition}
\theoremstyle{definition}
\numberwithin{equation}{section}
\newcommand{\ba}{\begin{array}}
	\newcommand{\ea}{\end{array}}
\begin{document}
	\date{}
	\title{ \bf\large{ Dynamic Coupling of Infiltration-Soil Moisture Feedback: Emergent Vegetation Patterns in a Water-Vegetation Model}\footnote{This work was partially supported by grants from National Science Foundation of China (12371503, 12071382), Natural Science Foundation of
Chongqing (CSTB2022NSCQ-MSX0284, CSTB2024NSCQ-MSX0992).}
	}
	\author{Juan Yan \textsuperscript{a},\ \ Xiaoli Wang \textsuperscript{a},\footnote{Corresponding Author. Email: wxl711@swu.edu.cn }\ \ Guohong Zhang \textsuperscript{a},\ \ Yuan Yuan\textsuperscript{b}
		\\
		{\small \textsuperscript{a}School of Mathematics and Statistics, Southwest
University, Chongqing 400715,  China\hfill{\ }}\\
{\small \textsuperscript{b}Department of Mathematics and Statistics, Memorial University of Newfoundland,\hfill{\ }}\\
\ \ {\small  St. John's,  Newfoundland, A1C 5S7, Canada \hfill{\ }}}
	
	\maketitle
	
	\noindent
	\begin{abstract}
		{  We present a modified water-vegetation  model to investigate the mechanistic relationship between infiltration-soil moisture feedback and vegetation pattern  in arid/semi-arid ecosystems. Employing Turing pattern formation theory, we drive conditions for  diffusion-induced instability and analyze  spatiotemporal dynamics  near Turing-Hopf bifurcation points. Our key findings include: (i) The system exhibits rich dynamics including multiple stable equilibria, supercritical/subcritical Hopf bifurcations, bubble loops of limit cycles and homoclinic bifurcations. % A progressive rainfall reduction induces regime shifts from monostable to oscillatory or multistable states, which may serve as an early-warning indicator of ecosystem collapse.
(ii) The system admits Turing-Hopf bifurcation. Using normal form theory, we establish the existence of quasiperiodic solutions and mixed-mode oscillations near critical thresholds, providing a mathematical framework for predicting nonlinear ecological regime shifts. (iii) Soil moisture feedbacks govern critical transitions between three distinct ecosystem states: uniform vegetation covering, self-organized spatial patterns (labyrinth/gapped vegetation), and bare soil state, which demonstrates that soil moisture thresholds control the final state selection in this system.}

		\noindent{\emph{Keywords}}: Water-vegetation model; Infiltration-soil moisture feedback; Turing pattern; Turing-Hopf bifurcation; Normal form.
	\end{abstract}
	\section{Introduction}
Vegetation, often referred to as an \lq\lq ecosystem engineer", plays an irreplaceable role in maintaining ecological balance \cite{jones1994}. Through photosynthesis, vegetation provides energy and sustains ecosystems. Its ground cover mitigates wind erosion, stabilizes sand, and reduces soil erosion. Additionally, transpiration facilitates the transfer of soil moisture to the atmosphere, contributing to global hydrological balance \cite{Lemordant2018,gallagher2019,danielsen2005}.
However, global climate change and the expansion of human activities have led to severe degradation of vegetation ecosystems, intensifying the process of desertification. This issue is particularly pronounced in arid and semi-arid areas, where water scarcity further exacerbates vegetation loss \cite{d2013,sun2022,wang1999}.

Vegetation degradation during desertification does not occur uniformly; instead, it manifests in distinct spatial patterns such as stripes, spots, and labyrinths, each of which has profound ecological implications \cite{saco2007,sherratt2005,sun2018}. For instance, stripe-like vegetation patterns are commonly observed in semi-arid landscapes \cite{sherratt2012}. To elucidate the mechanisms driving these spatial patterns, researchers have developed various water-vegetation models.
In 1999, Klausmeier \cite{klausmeier1999} introduced a seminal vegetation-water model to explain the formation of vegetation stripes on slopes and irregular mosaics on flat terrain, emphasizing the role of nonlinear dynamics in shaping plant community structures. Von Hardenberg \cite{von2001} later extended Klausmeier model by incorporating root competition for water resources, enhancing its ecological realism. Gilad et al. \cite{gilad2007} formulated a mathematical model to analyze woody plant ecosystems in arid regions, capturing multiple feedback mechanisms between biomass and water availability. Recognizing that Klausmeier model did not account for water diffusion in the soil, Vander Stelt  \cite{van2013rise} incorporated this factor into Klausmeier model. Zelnik  \cite{zelnik2015gradual} simplified Gilad's model and utilized empirical data to investigate the Namibian fairy circle ecosystem, demonstrating that spatially expanding ecosystems undergo gradual pattern transitions. Additionally, numerous studies have examined the mechanisms underlying vegetation pattern formation, including feedback effects \cite{sun2022spatial,sun2018,wang2017interaction} and non-local interactions \cite{eigentler2018analysis,liang2022nonlocal,guo2024pattern,liu2025spatiotemporal,xue2020spatiotemporal,yuan2023}.

Among the various factors influencing vegetation pattern formation, soil moisture plays a crucial role \cite{duan2019spatial}. It not only regulates vegetation productivity but also impacts terrestrial carbon sequestration capacity \cite{chen2014using}. Research has shown that fluctuations in soil moisture significantly affect vegetation coverage, with higher moisture levels generally promoting denser vegetation growth \cite{nega2023investigating}. In semi-arid regions, soil moisture availability determines ecosystem sustainability, influencing vegetation recovery and ecosystem services \cite{yang2014response}.

To further explore the role of soil moisture in vegetation pattern formation, we propose an improved water-vegetation model as follows:
	 \begin{equation}\label{eq1.1}
		\begin{cases}
			\displaystyle \frac{\partial w}{\partial t}=d_1 \Delta w+R-aw-\delta wb, & x \in \Omega ,t>0,  \\
			\displaystyle \frac{\partial b}{\partial t}=d_2 \Delta b+\rho b(1-\frac{b}{w})-\mu(w,b)b, & x \in \Omega ,t>0,   \\
			\displaystyle \frac{\partial w}{\partial \nu}=\frac{\partial b}{\partial \nu}=0,&  x\in \partial \Omega, t>0,\\
			\displaystyle w(x,0)=w_0(x) > 0,b(x,0)=b_0(x) \ge 0,& x \in \Omega,
		\end{cases}
	\end{equation}
where $w$ and $b$ represent the densities of water and vegetation, respectively;  $d_1$ and $d_2$ are coefficients of diffusion rate of water and  vegetation, respectively;
 $R$ denotes the mean annual rainfall rate; $a$ is the evaporation rate of soil water;  $\delta$ stands for the consumption rate of water by the vegetation; the term $\mu(w,b)$ is the vegetation mortality rate; $\Omega$ represents the spatial domain, $\partial \Omega$ is the boundary of $\Omega$, and  $\bar{\Omega}= \Omega \cup \partial \Omega$
is the spatial domain that includes its boundary $\partial \Omega$;
$\nu$ stands for the outward normal vector at $\partial \Omega$. Here, the vegetation growth term is $\rho b(1-\frac{b}{w})$ inspired by the Holling-Tanner model\cite{tanner1975stability}, where $\rho$ represents the intrinsic growth rate of the vegetation biomass. Unlike standard logistic growth, the carrying capacity is not fixed but is dynamically determined by the available water $w(x,t)$. Biologically, this vegetation growth formulation captures the resource-dependent growth of biomass, reflecting how vegetation density is regulated by local water availability. Specially, when $w$ is low, the system supports less vegetation, while greater water availability allows for increased vegetation growth.

To capture the effects of the inﬁltration feedback \cite{gilad2007} and soil moisture simultaneously on vegetation, we consider the following specific form for the vegetation mortality rate:
\begin{equation}\label{dr}
\mu(w,b)=\frac{\mu}{1+\theta_1 b+\theta_2 w},
\end{equation}
 where $\mu$ indicates mortality due to human
factors, $\theta_1$ describes the infiltration feedback of vegetation, and $\theta_2$ represents the effect of soil moisture on vegetation. When $\theta_1=\theta_2=0$, the vegetation mortality rate \eqref{dr} is a constant death rate. When $\theta_1>0$ and $\theta_2=0$, the authors in \cite{wang2017interaction} have found the rich dynamics of a simple water-biomass model. When $\theta_1>0, \theta_2>0$,
the vegetation mortality function \eqref{dr} exhibits a monotonically decreasing relationship with respect to both vegetation biomass and water availability. This can be ecologically explained as follows. On one hand, vegetation enhances the surrounding soil environment by promoting water infiltration, thereby increasing water uptake and reducing mortality. On the other hand, higher water availability leads to elevated soil moisture levels, which in turn suppresses vegetation mortality.

It is well known that in many reaction-diffusion models, complex spatiotemporal patterns can emerge through Turing-Hopf (TH) bifurcations \cite{song2019turing,song2020spatio,wu2020spatiotemporal,yi2009bifurcation,xing2024turing,lv2024turing}. A TH bifurcation is a codimension-two bifurcation with the characteristic equation having a pair of simple
purely imaginary roots and a simple zero root. The normal form method is a powerful tool for analyzing the spatiotemporal dynamics of a system near a TH bifurcation point. The main idea is to transform the  partial differential equations at the equilibrium near the TH bifurcation point into a topologically conjugate normal form.
Faria et al. \cite{faria1995normal} computed normal forms for delay functional differential equations, while Jiang et al. \cite{jiang2020formulation} derived a third-order normal form for TH bifurcations in partial functional differential equations using center manifold theory. Song et al. \cite{song2016turing} developed a rigorous procedure for computing normal forms in general reaction-diffusion systems, which was later extended to incorporate diffusion effects \cite{song2017spatiotemporal}.

In this paper, we assume that all parameters in model \eqref{eq1.1}  with $\mu(w,b)$ given by \eqref{dr} are nonnegative. We first consider the stability of positive equilibria and the existence of Hopf bifurcations of model \eqref{eq1.1}.  By taking the diffusion coefficient of water ($d_1$) as the Turing bifurcation parameter, we identify Turing instability in a water-vegetation model with soil moisture. We then investigate the spatiotemporal dynamics near the TH bifurcation. Through numerical simulations, we uncover rich dynamical behaviors in the corresponding ODE system, including forward/backward transcritical bifurcations, saddle-node bifurcations
of equilibria and limit cycles, Hopf bifurcations, limit cycle bubble/heart, homoclinic bifurcation, and bistability phenomena—either between two equilibrium states or between an equilibrium and a limit cycle. The dynamical behavior of the system is simulated across different parameter regions, which has significant implications for ecological sustainability, early warning signals, and ecosystem management.
Furthermore, we observe that as soil moisture ($\theta_2$) decreases, the system may experience multiple transitions: from uniform vegetation to pattern formation, and eventually to bare soil.

The rest of the paper is organized as follows.
In Section \ref{2}, we analyze the existence and stability of nonnegative  equilibria.
In Section \ref{3}, we derive the conditions for Hopf bifurcation, Turing bifurcation, and provide the TH bifurcation point.
In section \ref{4}, we determine the dynamical classification of the system through the normal form of TH bifurcation.
In section \ref{5}, some numerical simulations are performed to verify our theoretical results and specific kinetic classification is
obtained.
Finally, in Section \ref{6}, we give some discussions and conclusions. For simplicity of notations, throughout this paper, we denote the spaces $X=\{(u,v)\in H^{2}(\Omega)\cap H^{2}(\Omega),\frac{\partial u}{\partial \nu}=\frac{\partial v}{\partial \nu}=0, u,v\in \partial \Omega\}$ and the formula  $X_{\mathbb{C}}:=X \bigoplus iX = \left\lbrace x_{1}+ix_{2} | x_{1},x_{2}\in X\right\rbrace$ defines the complexity of any space $X$.

\section{Constant positive equilibria and stability}\label{2}
In this section, we first give some results about the existence and stability of constant nonnegative equilibria and their stability of system \eqref{eq1.1}  with $\mu(w,b)$ given by \eqref{dr}.
\subsection{Positivity and boundedness}
Regardless of spatial diffusion factors, system \eqref{eq1.1}  with $\mu(w,b)$ given by \eqref{dr} reads:
	\begin{equation}\label{eq2.1}
		\begin{cases}
			\displaystyle \frac{dw}{d t}=R-aw-\delta wb =: f(w,b),  \\
			\displaystyle \frac{d b}{d t}=\rho b(1-\frac{b}{w})-\frac{\mu}{1+\theta_1 b+\theta_2 w}b=: g(w,b).
		\end{cases}
	\end{equation}
Obviously, $f(w, b)$ and $g(w, b)$ are analytic functions in the first quadrant $\mathbb{R}_+^2 = \{(w, b) \in \mathbb{R}^2 : w>0, b \ge 0\}$. Assume that all parameters $R, a, \delta, \rho, \mu, \theta_1, \theta_2$ are positive. First, we show the positivity and boundedness of the solutions to (\ref{eq2.1}), which indicates that the model \eqref{eq2.1} is biologically meaningful.

\begin{lemma}\label{lem2.1}
	Any solution of system \eqref{eq2.1} with initial data $w_0 > 0$, $b_0 \ge 0$ exists globally for all $t > 0$, remains nonnegative, and is uniformly bounded. More precisely, if $w_0 > 0, b_0 > 0$, then there exists a positive constant $M_b$ such that for all $t > 0$ any solution $(w(t), b(t))$ to \eqref{eq2.1} satisfies
\begin{equation*}
	\min\big\{w_0,	\displaystyle \frac{R}{a+\delta M_b}\big\} \le w(t) \le \max\big\{w_0,	\displaystyle \frac{R}{a}\big\},\,and \,\, 0 < b(t) \le M_b.
	\end{equation*}
\end{lemma}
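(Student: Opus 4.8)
The plan is to establish the four claims in sequence: local existence and uniqueness, positivity, boundedness (a priori bound for $b$ and then the two-sided bound for $w$), and finally global existence via the a priori bounds.

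First I would invoke the standard existence-uniqueness theory for ODEs: since $f$ and $g$ are analytic on the open set $\mathbb{R}_+^2 = \{w>0, b\ge 0\}$ (the only possible singularity, $w=0$, is excluded), there is a unique maximal solution $(w(t),b(t))$ on some interval $[0,T_{\max})$ for any admissible initial data. For positivity I would work componentwise. Writing the $b$-equation as $b' = b\,h(w,b)$ with $h(w,b) = \rho(1-b/w) - \mu/(1+\theta_1 b+\theta_2 w)$, Gronwall/comparison gives $b(t) = b_0\exp\!\big(\int_0^t h(w(s),b(s))\,ds\big)$, so $b_0>0 \Rightarrow b(t)>0$ and $b_0=0 \Rightarrow b(t)\equiv 0$; this is valid on $[0,T_{\max})$. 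For $w$, rewrite $w' = R - (a+\delta b)w$; since $R>0$ and the coefficient $a+\delta b(t)$ is locally bounded, the variation-of-constants formula (or the observation that $w'>0$ whenever $w$ is small) shows $w(t)$ stays strictly positive and, more precisely, $w(t)\ge \min\{w_0, R/(a+\delta M_b)\}$ once the bound $b(t)\le M_b$ is known, while $w'\le R - aw$ yields $w(t)\le \max\{w_0, R/a\}$ by comparison with the scalar linear equation.

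The crux is the a priori bound $b(t)\le M_b$, and this is the step I expect to be the main obstacle, because the $b$-equation is not dissipative on its own — the carrying capacity $w$ is itself a dynamical variable and only the cheap bound $w\le \max\{w_0,R/a\} =: \overline{w}$ is immediately available. The natural attempt is to feed $w\le\overline{w}$ into the logistic term: $b' = \rho b(1 - b/w) - \frac{\mu b}{1+\theta_1 b+\theta_2 w} \le \rho b(1 - b/\overline{w})$, since $-b/w\le -b/\overline{w}$ and the mortality term is nonpositive. Comparison with the logistic equation $y' = \rho y(1-y/\overline{w})$ then gives $\limsup_{t\to\infty} b(t)\le \overline{w}$ and, more usefully for a uniform bound on all of $[0,\infty)$, $b(t)\le \max\{b_0,\overline{w}\}$. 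Hence one may take $M_b := \max\{b_0,\overline{w}\} = \max\{b_0, w_0, R/a\}$; I would double-check the edge case where $w_0$ itself could be smaller than the long-run level of $w$, but the comparison argument only needs $w(t)\le\overline{w}$, which holds from the $w$-comparison above, so this is clean. A subtle point worth a remark: the inequality $b'\le \rho b(1-b/w)$ requires $w(t)>0$, which we have already secured, so the argument is not circular.

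Finally, with $0\le b(t)\le M_b$ and $0<\underline{w}\le w(t)\le\overline{w}$ on $[0,T_{\max})$, the solution remains in a compact subset of the open domain $\mathbb{R}_+^2$ bounded away from $w=0$; the standard continuation theorem then forces $T_{\max}=\infty$, giving global existence, and the displayed two-sided estimates follow by assembling the bounds derived above. The whole argument is elementary once the decoupling observation — bound $w$ first, then use it to bound $b$ — is in place; no Lyapunov function or invariant-region machinery beyond scalar comparison is needed.
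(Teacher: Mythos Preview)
Your proposal is correct and follows essentially the same route as the paper: bound $w$ from above via $w'\le R-aw$, feed that into the logistic comparison $b'\le \rho b(1-b/\overline{w})$ to get $M_b=\max\{b_0,\overline{w}\}$, then use $b\le M_b$ to obtain the lower bound on $w$, and conclude global existence from the a priori bounds. Your write-up is in fact slightly more careful than the paper's about the logical ordering (local existence first, continuation at the end) and the non-circularity of the $w$-positivity step.
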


\begin{proof}
	We first establish the  non-negativity of solutions. When $b_0 = 0$, the second equation in system \eqref{eq2.1} implies that $b(t) \equiv 0$. In this case, the first equation reduces to a linear ODE:
	$$\frac{dw}{dt} = R - a w,$$
	whose explicit solution is given by
	$$w(t) = \left(w_0 - \frac{R}{a}\right)e^{-a t} + \frac{R}{a},$$
	which remains strictly positive for all $t > 0$.
	When $b_0 > 0$, the positivity of $w(t)$ is ensured by $\lim\limits_{w \to 0^+} \frac{dw}{dt} = R > 0$. Meanwhile, $b(t)$ remains strictly positive for all $t > 0$ since
	$$b(t) = b_0 \exp\left( \int_0^t \left( \rho \left(1 - \frac{b(s)}{w(s)} \right) - \frac{\mu}{1 + \theta_1 b(s) + \theta_2 w(s)} \right) ds \right) > 0.$$
	
	Next, we prove the uniform boundedness of solutions. From the first equation of system \eqref{eq2.1}, we have
	$$\frac{dw}{dt} = R - a w - \delta w b \le R - a w.$$
	By comparison with the auxiliary equation $\frac{dz}{dt} = R - a z$, whose solution is bounded above by $R/a$, it follows that
	$$w(t) \le \max\left\{w_0, \frac{R}{a} \right\} =: M_w.$$
	Using this upper bound in the second equation, we obtain
	$$
	\frac{db}{dt} \le \rho b \left(1 - \frac{b}{w} \right) \le \rho b \left(1 - \frac{b}{M_w} \right),
	$$
	which is a logistic-type inequality. Applying the standard comparison principle yields
	$$
	b(t) \le \max\left\{b_0, M_w \right\} =: M_b.
	$$
	To establish a positive lower bound for $w(t)$, we observe that
	$$
	\frac{dw}{dt} \ge R - (a + \delta M_b) w.
	$$
	which yields
	$$
	w(t) \ge \min\left\{w_0, \frac{R}{a + \delta M_b} \right\} > 0.
	$$
	
	In conclusion, $w(t)$ and $b(t)$ are nonnegative and  uniformly boundedand for all $t > 0$. This guarantees the global existence and boundedness of solutions to system \eqref{eq2.1}.
\end{proof}

\subsection{Existence of equilibria}
Obviously, the system (\ref{eq2.1}) has a bare-soil equilibrium $E_0=(\frac{R}{a},0)$ for all parameters. Before investigating the existence of positive equilibria of system (2.1), we first present the following proposition, which can be directly obtained by Descarte's Rule of signs. As the argument is elementary, we omit the proof here.
\begin{proposition}\label{prop2.2}
	Consider the quartic equation
$$m(x) = x^4 + m_3 x^3 + m_2 x^2 + m_1 x + m_0=0,$$
	where $m_3 > 0 $ and the signs of $ m_2, m_1, m_0 $ are arbitrary. The existence of positive roots of the equation $m(x)=0$ is summarized in the following table.
\begin{table}[htbp]
	\centering
	\caption{The existence of positive roots of the equation $m(x)=0$.}
	\label{tab2.2}
	\begin{adjustbox}{width=0.98\textwidth}
		\small
		\begin{tabular}{lccccccc}
			\toprule
			$ m_2 $ & $ m_1 $ & $ m_0 $ & $ m'(\bar{b}) $ & $ m(b_{11}) $ & $ m(b_{12}) $ & Number of Positive Roots & Illustration \\
			\midrule
			$+$ & $+$ & $-$ &  &  &  & one &Fig. \ref{fig:2.0}(a)A1\\
			$+$ & $-$ & $-$ &  &  &  & one &Fig. \ref{fig:2.0}(a)A2\\
			$-$ & $-$ & $-$ &  &  &  & one &Fig. \ref{fig:2.0}(a)A2\\
			$-$ & $+$ & $-$ & $+$ &  &  & one &Fig. \ref{fig:2.0}(a)A1\\
			$-$ & $+$ & $-$ & $-$ & $-$ &  & one &Fig. \ref{fig:2.0}(a)A3\\
			$-$ & $+$ & $-$ & $-$ & $+$ & $+$ & one &Fig. \ref{fig:2.0}(a)A4\\
			\midrule
			$+$ & $-$ & $+$ &  &  & $-$ & two &Fig. \ref{fig:2.0}(a)A5\\
			$-$ & $-$ & $+$ &  &  & $-$ & two &Fig. \ref{fig:2.0}(a)A5\\
			$-$ & $+$ & $+$ & $-$ &  & $-$ & two &Fig. \ref{fig:2.0}(a)A6\\
			\midrule
			$-$ & $+$ & $-$ & $-$ & $+$ & $-$ & three &Fig. \ref{fig:2.0}(a)A7\\
			\bottomrule
		\end{tabular}
	\end{adjustbox}
	
	\begin{minipage}{0.98\textwidth}
		\small
		\textbf{Note:} Here, $ b_{11} $ and $ b_{12} $ denote the two positive roots (if they exist) of $ m'(x) = 0$, with $ b_{11} < b_{12} $, and $ \bar{b} $ is the unique positive root of $ m''(x) = 0 $ when $ m_2 < 0 $.
	\end{minipage}
\end{table}
\end{proposition}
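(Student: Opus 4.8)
The plan is to combine Descartes' Rule of signs, which fixes the \emph{parity} of the number of positive roots in each sign pattern, with an elementary study of the graph of $m$ on $(0,\infty)$ through its first two derivatives, which then pins down the exact count. Throughout one uses that $m(0)=m_0$ and $m(x)\to+\infty$ as $x\to+\infty$, and one may assume $m_2,m_1,m_0\neq 0$, since the table records the generic sign patterns; the degenerate boundary cases (a vanishing coefficient, or a tangency) follow by continuity or direct inspection. Because every inequality appearing in a given row is strict, all positive roots counted there are simple.

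First I would analyze $m''(x)=12x^2+6m_3 x+2m_2$. Since its leading coefficient is positive and $m_3>0$: if $m_2\ge 0$ then $m''>0$ on $(0,\infty)$, so $m$ is strictly convex there; if $m_2<0$ then the product of the roots of $m''$ equals $m_2/6<0$, so $m''$ has a single positive zero $\bar{b}$, and $m$ is concave on $(0,\bar{b})$ and convex on $(\bar{b},\infty)$. Hence $m'(x)=4x^3+3m_3 x^2+2m_2 x+m_1$ is either strictly increasing on $(0,\infty)$ (case $m_2\ge 0$) or strictly decreasing on $(0,\bar{b})$ and strictly increasing on $(\bar{b},\infty)$, attaining its minimum over $(0,\infty)$ at $\bar{b}$ (case $m_2<0$). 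Combining this with $m'(0)=m_1$ and $m'(+\infty)=+\infty$, the number of positive zeros of $m'$ — denoted $b_{11}<b_{12}$ when there are two, and $b_{12}$ when there is one — is $0$, $1$, or $2$, and which alternative occurs is determined precisely by the sign of $m_1$ and, when $m_2<0$ and $m_1>0$, by the sign of $m'(\bar{b})$. This is exactly the information carried by the columns $m'(\bar{b})$, $m(b_{11})$, $m(b_{12})$ of Table~\ref{tab2.2}.

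The proof then proceeds by the three possible monotonicity types of $m$ on $(0,\infty)$. (i) If $m'$ has no positive zero, then $m$ is strictly increasing, so $m$ has a positive root if and only if $m_0<0$, and it is unique; this yields the rows $(+,+,-)$ and $(-,+,-)$ with $m'(\bar{b})=+$. (ii) If $m'$ has a single positive zero $b_{12}$, it must be a minimum of $m$, so $m$ decreases then increases; then there is exactly one positive root when $m_0<0$ (rows $(+,-,-)$, $(-,-,-)$) and $2$ or $0$ positive roots when $m_0>0$ according as $m(b_{12})<0$ or $m(b_{12})>0$ (rows $(+,-,+)$, $(-,-,+)$). (iii) If $m'$ has two positive zeros $b_{11}<b_{12}$ — which happens precisely when $m_2<0$, $m_1>0$, $m'(\bar{b})<0$ — then $m$ has a local maximum at $b_{11}$ and a local minimum at $b_{12}$, so its graph is increasing–decreasing–increasing; a short check on the signs of $m_0$, $m(b_{11})$, $m(b_{12})$ then gives: one root if $m_0<0$ and either $m(b_{11})<0$ or $m(b_{11})>0$ with $m(b_{12})>0$; three roots if $m_0<0$, $m(b_{11})>0$, $m(b_{12})<0$; and two roots if $m_0>0$ and $m(b_{12})<0$ — matching the remaining rows with $m'(\bar{b})=-$. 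At each step the parity of the count is consistent with Descartes' Rule applied to the coefficient string $(1,m_3,m_2,m_1,m_0)$, which serves as a useful cross-check.

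The argument is entirely elementary; the only mildly laborious point, and hence the main obstacle, is the bookkeeping in case (iii), namely ensuring that the auxiliary quantities $\bar{b}$, $b_{11}$, $b_{12}$ are introduced exactly when they exist and that each of the ten sign configurations is matched line by line with Table~\ref{tab2.2}. Once the three monotonicity types above are established this reduces to a finite and routine verification, which is why the details may be omitted.
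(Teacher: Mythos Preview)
Your proposal is correct and matches the paper's intent: the paper does not give a proof at all, stating only that the proposition ``can be directly obtained by Descartes' Rule of signs'' and that ``as the argument is elementary, we omit the proof here.'' Your combination of Descartes' Rule for the parity with a first- and second-derivative analysis to pin down the exact count is precisely the elementary argument the authors have in mind (and indeed is needed, since the columns $m'(\bar{b})$, $m(b_{11})$, $m(b_{12})$ in the table go beyond what Descartes' Rule alone provides).
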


%\begin{figure}[!h]
%	\centering
	% 两张图片并列
%\includegraphics[width=0.55\textwidth]{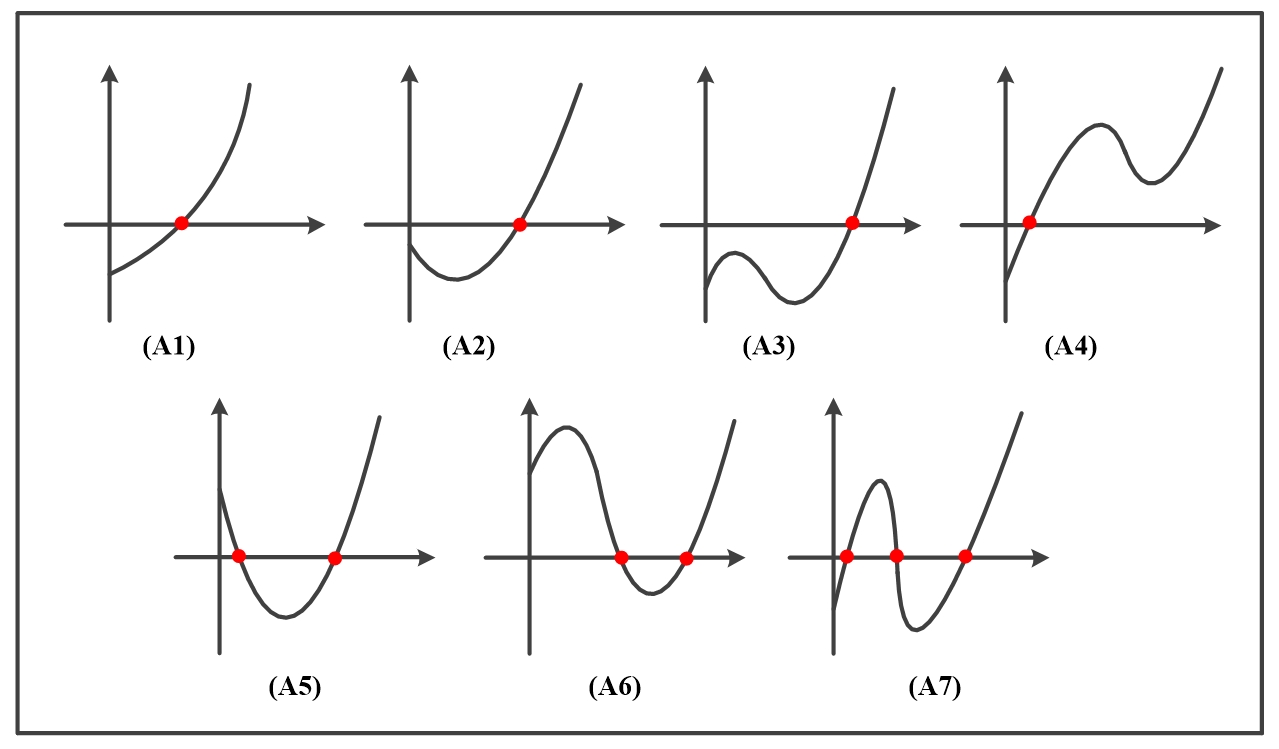}
%\subfigure[]{\includegraphics[width=0.38\textwidth]{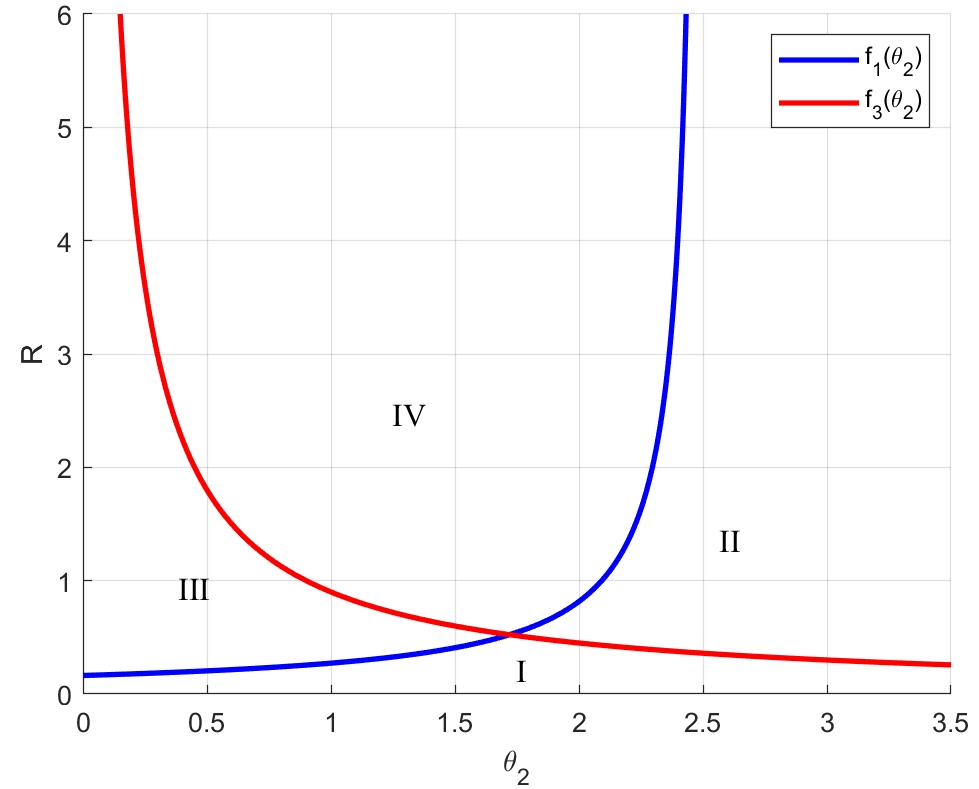}}
%	\caption{ Possible shapes of the graph of $m(x)$.}
%	\label{fig:2.0}
%\end{figure}

%\begin{figure}[!t]
%	\centering
	% 两张图片并列
%\subfigure[]{\includegraphics[width=0.52\textwidth]{Figure/pde/F.png}} \hspace{0.1pt}
%\includegraphics[width=0.45\textwidth]{Figure/ode/theta.png}
%	\caption{ Illustration of the parameter subregions in the $\theta_2-R$
%		plane with $a=0.1,\,\, \delta=0.12,\,\, \rho=1,\,\, \mu=10, \theta_1=2.5.$ }
%	\label{fig:2.1}
%\end{figure}

\begin{figure}[!h]


	\centering
	% 两张图片并列
	\subfigure[]{\includegraphics[width=0.475\textwidth]{Figure/pde/F.png}} \hspace{5pt}
	\subfigure[]{\includegraphics[width=0.35\textwidth]{Figure/ode/theta.png}}
	\caption{(a) Possible shapes of the graph of $m(x)$. (b) Illustration of the parameter subregions in the $\theta_2-R$
		plane with $a=0.1,\,\, \delta=0.12,\,\, \rho=1,\,\, \mu=10, \theta_1=2.5.$}
	\label{fig:2.0}
\end{figure}

 Now we discuss the existence of positive equilibria of system (\ref{eq2.1}). A positive equilibrium $E^{*}=(w^{*},b^{*})$ satisfies $w^{*} = \frac{R}{a + \delta b^{*}}$ and $h(b^*)=0$, where
\begin{equation}\label{eq2.3}
			\begin{aligned}
	 h(b) &:= \frac{\rho}{R} \left( R - ab - \delta b^{2} \right) - \frac{\mu (a + \delta b)}{\left( 1 + \theta_1 b \right) \left( a + \delta b \right) + \theta_2 R}\\
&=-\frac{\rho \delta^2 \theta_1}{R[(1+\theta_1 b)(a+\delta b)+\theta_2 R]}F(b).
\end{aligned}
\end{equation}
Here,
\begin{center}
	$F(b) = b^4+C_3b^3+C_2b^2+C_1b+C_0$,
\end{center}
with
\begin{center}
	$\begin{aligned}
C_0& = \frac{\mu aR - R^2 \rho \theta_2 - Ra\rho}{\rho \delta^2 \theta_1},\quad\quad\quad\quad
C_1 = \frac{\rho (R\theta_2 a - R\theta_1 a + a^2 - R\delta) + \mu R \delta}{\rho \delta^2 \theta_1}, \quad \\
C_2 &= \frac{R\delta \theta_2 - R\delta \theta_1 + 2a\delta + \theta_1 a^2}{\delta^2 \theta_1}, \quad\ \ 		
		C_3 = \frac{2a\theta_1 + \delta}{\delta \theta_1}> 0.
	\end{aligned}$
\end{center}
Setting $ C_2 =0, C_1 =0, C_0 = 0 $, respectively,  we obtain
\begin{center}
	$\begin{aligned}
		R&=\frac{2a\delta+\theta_1 a^2}{\delta(\theta_1-\theta_2)}=:f_1(\theta_2),\\
		R&=\frac{\rho a^2}{\rho \theta_1 a+\rho \delta-\rho\theta_2 a-\mu \delta}=:f_2(\theta_2),\\
		R&=\frac{a(\mu - \rho)}{\rho  \theta_2}=:f_3(\theta_2).
	\end{aligned}$
\end{center}

With parameters $a, \delta, \rho, \mu, \theta_1$ fixed, we plot the curves $f_1(\theta_2), f_2(\theta_2), f_3(\theta_2)$ in the $\theta_2-R$ plane. Assume that $\mu>\rho$. In a particular case, the curves $f_1(\theta_2)$ and $f_3(\theta_2)$ divide the first quadrant of the parameter plane into four subregions (see Fig. \ref{fig:2.0}(b)), as the curve $f_2(\theta_2)$ is not in the the first quadrant. We denote these subregions as I, II, III, and IV, respectively, and define them as follows:

\begin{center}
	$\begin{aligned}
		\mathrm{I}=&\left\{(\theta_2,R):0<R<\min\left\{f_1(\theta_2),f_3(\theta_2)\right\}\right\},\\
		\mathrm{II}=&\left\{(\theta_2,R):0<\theta_2<\theta_1,f_3(\theta_2)<R<f_1(\theta_2)\right\}\cup\left\{(\theta_2,R):\theta_2>\theta_1,R>f_3(\theta_2)\right\},\\
		\mathrm{III}=&\left\{(\theta_2,R):0<\theta_2<\theta_1,f_1(\theta_2)<R<f_3(\theta_2)\right\},\\
		\mathrm{IV}=&\left\{(\theta_2,R):0<\theta_2<\theta_1,R>\max\left\{f_1(\theta_2),f_3(\theta_2)\right\}\right\}.
	\end{aligned}$
\end{center}

The existence of positive equilibria in each of these regions is as follows.
\begin{theorem}\label{thm2.2}
	With parameters $a, \delta, \rho, \mu, \theta_1$ fixed, let $f_1(\theta_2)$, $f_2(\theta_2)$, $f_3(\theta_2)$, and the subregions $\mathrm{I}-\mathrm{IV}$ be defined as above. Then the existence of positive equilibria for system~\eqref{eq2.1} is characterized as follows:
\begin{description}
	\item[(i)] For $(\theta_2, R) \in \mathrm{I} $, there exists no positive equilibria.
	\item[(ii)] For $(\theta_2, R) \in \mathrm{II}$, there exists a unique  positive equilibrium
	$	E_1^{*} = \left( \frac{R}{a + \delta b_1^{*}},\ b_1^{*} \right),$ where $b_1^{*}$ is the unique positive root of $F(b)$.
	\item[(iii)] For $(\theta_2, R) \in \mathrm{III}$, if $F'(\bar{b}) < 0$ and $F(b_{12}) < 0$, then there exist two  positive equilibria $E_{2j}^{*} = \left( \frac{R}{a + \delta b_{2j}^{*}},\ b_{2j}^{*} \right),  j = 1,2,$ where $0 < b_{21}^{*} < b_{22}^{*}$ are consecutive positive roots of $F(b)$.
	  \item[(iv)] For $(\theta_2, R) \in \mathrm{IV}$, the following two cases may occur:
	  \begin{enumerate}
			\item[(a)] A unique positive equilibrium $E_1^{*}$ exists if either
			  $F'(\bar{b}) > 0$, or $\left \{\begin{aligned}
				&F'(\bar{b}) < 0,\\
				&F(b_{11}) < 0,\end{aligned}\right.$ or
		$\left \{\begin{aligned}
				&F'(\bar{b}) < 0,\\
				&F(b_{11}) > 0,\\
				&F(b_{12}) > 0,\\
			\end{aligned}\right.$ holds.
			    \item[(b)] Three positive equilibria $E_{3j}^{*} = \left( \frac{R}{a + \delta b_{3j}^{*}},\ b_{3j}^{*} \right) ( j = 1,2,3)$
				exist if $\left \{\begin{aligned}
					&F'(\bar{b}) < 0,\\
					&F(b_{11}) > 0,\\
					&F(b_{12})< 0,\\
				\end{aligned}\right.$
					where $0 < b_{31}^{*} < b_{32}^{*} < b_{33}^{*}$ are the ordered positive roots of $F(b)$.
				  \end{enumerate}	
%  \item[(i)] For $(\theta_1, \theta_2) \in \mathrm{I} \cup \mathrm{II} \cup \mathrm{III}$, there exists a unique simple positive equilibrium
%	$	E_1^{*} = \left( \frac{R}{a + \delta b_1^{*}},\ b_1^{*} \right),$
%	where $b_1^{*}$ is the unique positive root of $F(b)$.
%  \item[(ii)] For $(\theta_1, \theta_2) \in \mathrm{IV}$, if $F'(\bar{b}) < 0$ and $F(b_{12}) < 0$, there exist two simple positive equilibria $E_{2j}^{*} = \left( \frac{R}{a + \delta b_{2j}^{*}},\ b_{2j}^{*} \right),  j = 1,2,$
%  where $0 < b_{21}^{*} < b_{22}^{*}$ are consecutive positive roots of $F(b)$.
%  \item[(iii)] For $(\theta_1, \theta_2) \in \mathrm{V}$ and $F(b_{12}) < 0$, then  system \eqref{eq2.1} admits two simple positive equilibria $E_{21}^{*}$ and $E_{22}^{*}$ as defined in $\mathbf{(ii)}$.
%  \item[(iv)] For $(\theta_1, \theta_2) \in \mathrm{VI}$, the following two case may occur:
%  \begin{enumerate}
%	\item[(a)] A unique simple equilibrium $E_1^{*}$ exists if either
%	  $F'(\bar{b}) > 0$, or $\left \{\begin{aligned}
%	&F'(\bar{b}) < 0,\\
%	&F(b_{11}) < 0,\end{aligned}\right.$ or
%$\left \{\begin{aligned}
%	&F'(\bar{b}) < 0,\\
%	&F(b_{11}) > 0,\\
%	&F(b_{12}) > 0,\\
%\end{aligned}\right.$ holds.
%    \item[(b)] Three simple equilibria $E_{3j}^{*} = \left( \frac{R}{a + \delta b_{3j}^{*}},\ b_{3j}^{*} \right) ( j = 1,2,3)$
%	exist if $\left \{\begin{aligned}
%	&F'(\bar{b}) < 0,\\
%	&F(b_{11}) > 0,\\
%	&F(b_{12})< 0,\\
%\end{aligned}\right.$
%	where $0 < b_{31}^{*} < b_{32}^{*} < b_{33}^{*}$ are the ordered positive roots of $F(b)$.
%  \end{enumerate}	
\end{description}
Here, $ b_{11}, b_{12}  ( b_{11} < b_{12}) $denote the two positive roots (if they exist) of $ F'(b) = 0$, and $ \bar{b} $ is the unique positive root of $ F''(b) = 0 $ when $ C_2 < 0 $.
\end{theorem}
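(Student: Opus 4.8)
The plan is to reduce the statement to counting positive roots of the quartic $F$ and then to read the answer off from Proposition~\ref{prop2.2}. First I would note that a point $E^{*}=(w^{*},b^{*})$ with $w^{*}>0,\ b^{*}>0$ is an equilibrium of \eqref{eq2.1} exactly when $w^{*}=\frac{R}{a+\delta b^{*}}$ (from $f(w^{*},b^{*})=0$) and, after substituting this relation into $g(w^{*},b^{*})=0$ and dividing by $b^{*}>0$, when $h(b^{*})=0$ with $h$ as in \eqref{eq2.3}. Since all parameters are positive, $(1+\theta_{1}b)(a+\delta b)+\theta_{2}R>0$ for every $b\ge 0$, so the factorization displayed in \eqref{eq2.3} gives $h(b)=0\Longleftrightarrow F(b)=0$ on $[0,\infty)$. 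Hence the positive equilibria of \eqref{eq2.1} are in one-to-one correspondence with the positive roots $b^{*}$ of $F(b)=b^{4}+C_{3}b^{3}+C_{2}b^{2}+C_{1}b+C_{0}$, the companion coordinate being $w^{*}=\frac{R}{a+\delta b^{*}}>0$. Since $C_{3}>0$, Proposition~\ref{prop2.2} applies to $F$ in the form stated.

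The core of the argument is to fix the signs of $C_{0},C_{1},C_{2}$ on the subregions $\mathrm{I}$--$\mathrm{IV}$. A short computation should give $C_{0}=\frac{R\theta_{2}}{\delta^{2}\theta_{1}}\bigl(f_{3}(\theta_{2})-R\bigr)$, so $\operatorname{sgn}C_{0}=\operatorname{sgn}\bigl(f_{3}(\theta_{2})-R\bigr)$; that $C_{2}$ is affine in $R$, vanishes exactly at $R=f_{1}(\theta_{2})$, and has $R$-coefficient proportional to $\theta_{2}-\theta_{1}$, so $\operatorname{sgn}C_{2}=\operatorname{sgn}\bigl(f_{1}(\theta_{2})-R\bigr)$ for $0<\theta_{2}<\theta_{1}$ while $C_{2}>0$ for $\theta_{2}\ge\theta_{1}$; and that $C_{1}=\frac{\rho a^{2}+RD}{\rho\delta^{2}\theta_{1}}$ with $D:=\rho a(\theta_{2}-\theta_{1})+\delta(\mu-\rho)$, where $-D$ is precisely the denominator of $f_{2}(\theta_{2})$, so the standing hypothesis of this subsection --- $\mu>\rho$ together with $f_{2}$ lying outside the first quadrant, i.e. $D>0$ --- forces $C_{1}>0$. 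Plugging these into the definitions of $\mathrm{I}$--$\mathrm{IV}$, the triple $(\operatorname{sgn}C_{2},\operatorname{sgn}C_{1},\operatorname{sgn}C_{0})$ comes out as $(+,+,+)$ on $\mathrm{I}$, $(+,+,-)$ on each of the two pieces of $\mathrm{II}$, $(-,+,+)$ on $\mathrm{III}$, and $(-,+,-)$ on $\mathrm{IV}$.

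Finally I would invoke Proposition~\ref{prop2.2} and Table~\ref{tab2.2}, with the $m,b_{11},b_{12},\bar b$ there read as $F$ and the quantities named in the present statement. On $\mathrm{I}$ all coefficients of $F$ are positive, so there is no positive root, which is~(i). On $\mathrm{II}$ the pattern $(+,+,-)$ matches the first row of Table~\ref{tab2.2}, giving a single positive root $b_{1}^{*}$ and the unique equilibrium $E_{1}^{*}$ of~(ii). On $\mathrm{III}$ the pattern $(-,+,+)$ matches the table row whose hypotheses are $F'(\bar b)<0$ and $F(b_{12})<0$, yielding the two roots $0<b_{21}^{*}<b_{22}^{*}$ of~(iii). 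On $\mathrm{IV}$ the pattern $(-,+,-)$ corresponds to the four rows of Table~\ref{tab2.2} carrying it: a single positive root under each of the three alternatives listed in~(iv)(a), and three positive roots $0<b_{31}^{*}<b_{32}^{*}<b_{33}^{*}$ under the remaining condition $F'(\bar b)<0,\ F(b_{11})>0,\ F(b_{12})<0$, which is~(iv)(b). In every case the associated $w$-coordinate is $\frac{R}{a+\delta b^{*}}$.

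The step I expect to be the main --- indeed essentially the only --- obstacle is the sign bookkeeping of the second paragraph: above all, verifying that the geometric assumption ``$f_{2}(\theta_{2})$ is not in the first quadrant'' really forces $C_{1}>0$ for all admissible $\theta_{2}$, which comes down to the affine function $D(\theta_{2})=\rho a(\theta_{2}-\theta_{1})+\delta(\mu-\rho)$ being positive on $(0,\infty)$ (equivalently $\delta(\mu-\rho)\ge\rho a\theta_{1}$, which holds for the parameters of Fig.~\ref{fig:2.0}), and handling correctly both the $\theta_{2}\lessgtr\theta_{1}$ case split for $\operatorname{sgn}C_{2}$ and the two-component description of region $\mathrm{II}$. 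Once these sign determinations are secured, Theorem~\ref{thm2.2} is a direct transcription of Proposition~\ref{prop2.2}.
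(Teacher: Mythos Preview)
Your proposal is correct and is exactly the argument the paper has in mind. The paper does not spell out a proof of Theorem~\ref{thm2.2}: it presents Proposition~\ref{prop2.2} (Descartes-type root count for the quartic), derives $C_0,\dots,C_3$, introduces $f_1,f_2,f_3$ precisely as the zero-sets of $C_2,C_1,C_0$, and then simply states the theorem; you have supplied the sign bookkeeping linking the regions $\mathrm{I}$--$\mathrm{IV}$ to rows of Table~\ref{tab2.2}, which is what the paper leaves to the reader. Your identification of the one delicate point --- that the standing hypothesis ``$f_2$ is not in the first quadrant'' amounts to $D(\theta_2)=\rho a(\theta_2-\theta_1)+\delta(\mu-\rho)>0$ and hence $C_1>0$ throughout --- is exactly right and is the only place a reader might stumble.
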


For other cases, the existence of positive equilibria of system \eqref{eq2.1} can be analyzed similarly and we omit it here.

\subsection{Local stability of equilibria}

From Theorem \ref{thm2.2}, the system (\ref{eq2.1}) may have at most four non-negative equilibria. In this subsection we investigate the local stability of these equilibria. First we have the following results for  $E_0=(\frac{R}{a},0)$.

\begin{theorem}\label{thm2.3}
	Assume that all parameters are positive and define
	\begin{equation}\label{eq2.5}
		R^{*}:=\frac{a(\mu-\rho)}{\rho \theta_2}.
	\end{equation}
 If $R<R^{*}$,	then the bare-soil equilibrium $E_0=(\frac{R}{a},0)$ is locally asymptotically stable, and  unstable if $R>R^{*}$.
\end{theorem}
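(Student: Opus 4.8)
The plan is to linearize system \eqref{eq2.1} at the bare-soil equilibrium $E_0=(\frac{R}{a},0)$ and read off stability from the eigenvalues of the Jacobian. First I would compute the partial derivatives of $f$ and $g$ at $E_0$. Since $f(w,b)=R-aw-\delta wb$ we get $f_w=-a-\delta b$ and $f_b=-\delta w$, so at $E_0$ these are $-a$ and $-\delta R/a$. For $g(w,b)=\rho b(1-\frac{b}{w})-\frac{\mu}{1+\theta_1 b+\theta_2 w}b$, note that every term carries a factor $b$, so $g_w(E_0)=0$, and $g_b(E_0)=\rho-\frac{\mu}{1+\theta_2 R/a}=\rho-\frac{\mu a}{a+\theta_2 R}$. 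Hence the Jacobian at $E_0$ is upper triangular,
\begin{equation*}
J(E_0)=\begin{pmatrix} -a & -\delta R/a \\ 0 & \ \rho-\dfrac{\mu a}{a+\theta_2 R}\end{pmatrix},
\end{equation*}
with eigenvalues $\lambda_1=-a<0$ and $\lambda_2=\rho-\frac{\mu a}{a+\theta_2 R}$.

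The next step is to determine the sign of $\lambda_2$. A short computation gives
\begin{equation*}
\lambda_2=\rho-\frac{\mu a}{a+\theta_2 R}=\frac{\rho(a+\theta_2 R)-\mu a}{a+\theta_2 R}=\frac{\rho\theta_2 R-a(\mu-\rho)}{a+\theta_2 R}.
\end{equation*}
Since all parameters are positive, the denominator $a+\theta_2 R$ is positive, so $\operatorname{sign}(\lambda_2)=\operatorname{sign}\bigl(\rho\theta_2 R-a(\mu-\rho)\bigr)$. Therefore $\lambda_2<0$ exactly when $\rho\theta_2 R<a(\mu-\rho)$, i.e. when $R<\frac{a(\mu-\rho)}{\rho\theta_2}=R^*$, and $\lambda_2>0$ when $R>R^*$. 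When $R<R^*$ both eigenvalues are negative, so $E_0$ is locally asymptotically stable; when $R>R^*$ the eigenvalue $\lambda_2$ is positive, so $E_0$ is unstable (a saddle). To be fully rigorous about the reaction-diffusion system \eqref{eq1.1} rather than the ODE \eqref{eq2.1}, I would note that with Neumann boundary conditions the stability of a constant equilibrium is governed by the eigenvalues of $J(E_0)-\lambda_k\,\mathrm{diag}(d_1,d_2)$ over all eigenvalues $\lambda_k\ge 0$ of $-\Delta$; since subtracting $\lambda_k\,\mathrm{diag}(d_1,d_2)$ only makes the (still triangular) diagonal entries more negative, the spectrum stays in the left half-plane whenever $\lambda_2<0$, so the same threshold $R^*$ applies.

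I do not expect a genuine obstacle here: the key structural fact is that $g$ vanishes to first order in $b$ at $E_0$, which makes $J(E_0)$ triangular and reduces the problem to inspecting a single scalar expression. The only points requiring minor care are bookkeeping the algebraic rearrangement of $\lambda_2$ into the form involving $R^*$ and, if one wants the PDE statement, the remark about Neumann eigenmodes not affecting triangularity. The borderline case $R=R^*$ (where $\lambda_2=0$) is not claimed in the theorem and would need center-manifold analysis, but it is outside the scope of the statement.
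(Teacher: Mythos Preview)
Your proposal is correct and follows essentially the same approach as the paper: compute the (triangular) Jacobian of \eqref{eq2.1} at $E_0$, read off the eigenvalues $\lambda_1=-a$ and $\lambda_2=\rho-\frac{\mu a}{a+\theta_2 R}$, and compare $R$ with $R^*$. Your explicit rewriting of $\lambda_2$ and the PDE remark are a bit more detailed than the paper's version, but the argument is the same.
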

\begin{proof}
	The Jacobian matrix of system (\ref{eq2.1}) at the bare-soil equilibrium $E_0 = \left(\frac{R}{a}, 0\right)$ is
$J\left(\frac{R}{a}, 0\right) =
	\begin{pmatrix}
		-a &  -\frac{\delta R}{a} \\
		0 &  \rho - \frac{\mu a}{a + \theta_2 R}
	\end{pmatrix}$,
whose eigenvalues are $\lambda_1=-a$ and $\lambda_2=\rho - \frac{\mu a}{a + \theta_2 R}$. Thus $E_0$ is locally asymptotically stable if $R<R^{*}$ and unstable if $R>R^{*}$.
\end{proof}

%Next, we further show that the bare-soil equilibrium is globally asymptotically stable if the rainfall rate R is low.
%\begin{theorem}\label{thm2.4}
%	Assume that all parameters are positive and define
%	\begin{equation}\label{eq2.6}
%		R^{**}:=\frac{\mu-\rho}{\rho(\theta_1+ \theta_2)}.
%	\end{equation}
%If $0<R<R^{**}$,	then $E_0$ is globally asymptotically stable for system (\ref{eq2.1}).
%\end{theorem}
%\begin{proof}
%	According to Lemma \ref{lem2.1}, for any $\varepsilon>0$, there exists a $T_1>0$ such that $0<b(t)\le R+\varepsilon$ and $\frac{R}{a+\delta(R+\varepsilon)} \le w(t) \le R+\varepsilon$ for $t>T_1$. Then, for system (\ref{eq2.1}), we have
%	\begin{center}
%		$\displaystyle b'(t)\le b(t)(\rho-\frac{\mu}{1+\theta_1b+\theta_2(R+\varepsilon)}) \le b(t)(\rho-\frac{\mu}{1+(\theta_1+\theta_2)(R+\varepsilon)})$
%	\end{center}
%	for $t>T_1$. Therefore, if $\rho-\frac{\mu}{1+(\theta_1+\theta_2)R}<0$, i.e. $0<R<R^{**}$, we have $\lim\limits_{t \to \infty} b(t) =0.$  So there exists a $T_2$ such that $b(t)<\varepsilon$ for $t>T_2$, and
%	$$R-(a+\delta \varepsilon) w(t)\le w'(t) \le R-aw(t).$$
%	This clearly shows that  $\lim\limits_{t \to \infty} w(t)=\frac{
%	R}{a}$.
%\end{proof}
In the following, we discuss the local stability of the positive equilibria obtained in Theorem \ref{thm2.2}.
The following signs of $F'(b)$ at the simple zero may need
\begin{equation}\label{eq2.4}
	 F'(b_1^{*})>0, F'(b_{21}^{*})<0, F'(b_{22}^{*})>0,  F'(b_{31}^{*})>0, F'(b_{32}^{*})<0, F'(b_{33}^{*})>0,
\end{equation}
which can be easily obtained from the characteristics of the function $F(b)$ illustrated in Fig. \ref{fig:2.0}(a).

Recall the function $h(b)$ defined in (\ref{eq2.3}), then we have
$$h'(b^{*}) = \frac{\mu \theta_1(a+\delta b^{*})^2-\mu \delta \theta_2 R}{[(1+\theta_1 b^{*})(a+\delta b^{*})+\theta_2 R]^2}-\frac{\rho}{R}(a+2\delta b^{*}),$$
 which yields
$$h'(b^{*}) = -\frac{\rho \delta^2 \theta_1 }{R[(1+\theta_1 b^{*})(a+\delta b^{*})+\theta_2 R]}F'(b^{*}).
$$

The associated Jacobian matrix at the equilibrium $E^{*}=(w^{*},b^{*})$ is given by
\begin{equation}\label{eq2.7}
J\left(w^{*}, b^{*}\right) =
\begin{pmatrix}
	a_{11} & a_{12} \\
	a_{21} & a_{22}
\end{pmatrix},
\end{equation}
where
\begin{equation}\label{eq2.8}
	\begin{aligned}
		a_{11} & = -a - \delta b^{*}<0, & a_{21} & = \frac{\rho b^{*2}}{w^{*2}} + \frac{\mu \theta_2 b^{*}}{(1+\theta_1 b^{*} + \theta_2 w^{*})^2}>0, \\
		a_{12} & = -\delta w^{*}<0, & a_{22} & = \frac{\mu \theta_1 b^{*}}{(1+\theta_1 b^{*} + \theta_2 w^{*})^2} - \frac{\rho b^{*}}{w^{*}}.
	\end{aligned}
\end{equation}
Then we have
\begin{center}
	$\begin{aligned}
	\text{Det}(J(E^{*})) = & \, \delta w^{*} \left[ \frac{\rho b^{*2}}{w^{*2}} + \frac{\mu \theta_2 b^{*}}{(1+\theta_1 b^{*} + \theta_2 w^{*})^2} \right]  - (a + \delta b^{*}) \left[ \frac{\mu \theta_1 b^{*}}{(1+\theta_1 b^{*} + \theta_2 w^{*})^2} - \frac{\rho b^{*}}{w^{*}} \right] \\
		= & \, b^{*}(a + \delta b^{*}) \left\{\frac{\mu \delta \theta_2 R - \mu \theta_1 (a + \delta b^{*})^2}{[(1+\theta_1 b^{*})(a+\delta b^{*}) + \theta_2 R]^2} + \frac{\rho (a + 2\delta b^{*})}{R}\right\}\\
		= & \, -b^{*}(a + \delta b^{*}) h'(b^{*}) \\
		= & \, \frac{\rho \delta^2 \theta_1  b^{*}(a + \delta b^{*})}{R[(1+\theta_1 b^{*})(a+\delta b^{*})+\theta_2 R]}F'(b^{*}).
	\end{aligned}$
\end{center}
This shows that $sign\,\,\text{Det}(J(E^{*})) = -sign\,\,h'(b^{*}) = sign\,\, F'(b^{*})$. Then we get from (\ref{eq2.4}) that
\begin{center}
	$\begin{aligned}
		\text{Det}(J(E_{\,1}^{*})) & > 0, \quad
		\text{Det}(J(E_{21}^{*})) & < 0, \quad
		\text{Det}(J(E_{22}^{*})) & > 0, \\
		\text{Det}(J(E_{31}^{*})) & > 0, \quad
		\text{Det}(J(E_{32}^{*})) & < 0, \quad
		\text{Det}(J(E_{33}^{*})) & > 0.
	\end{aligned}$
\end{center}
Therefore,  when the equilibria $E_{21}^{*}$ and $E_{32}^{*}$ exist, they are saddles and unstable, while the other four equilibria are locally asymptotically stable if $\text{Tr}(J(E^{*}))<0$ for the corresponding equilibrium.

Next, we discuss the sign of  $\text{Tr}(J(E^{*}))$. Since $w^{*}$ and $b^{*}$ satisfy $\rho (1-\frac{b^{*}}{w^{*}})-\frac{\mu}{1+\theta_1 b^{*}+\theta_2 w^{*}}=0$, we have
\begin{center}
	$\begin{aligned}
		\text{Tr}(J(E^{*})) &= - a - \delta b^{*} + \frac{\mu \theta_1 b^{*}}{(1+\theta_1 b^{*} + \theta_2 w^{*})^2} - \frac{\rho b^{*}}{w^{*}} \\
		&= -\frac{a+\delta b^{*}}{R[(1+\theta_1 b^{*})(a+\delta b^{*}) + \theta_2 R]}
		\left\{ (R+\rho b^{*})[(1+\theta_1 b^{*})(a+\delta b^{*}) + \theta_2 R] \right. \\
		& \qquad \left. - \frac{\mu R \theta_1 b^{*}(a+\delta b^{*})}{(1+\theta_1 b^{*})(a+\delta b^{*}) + \theta_2 R} \right\} \\
		&= -\frac{a+\delta b^{*}}{R[(1+\theta_1 b^{*})(a+\delta b^{*}) + \theta_2 R]}
		\left\{ (R+\rho b^{*})[(1+\theta_1 b^{*})(a+\delta b^{*}) + \theta_2 R] \right. \\
		& \qquad \left. + \rho \theta_1 b^{*} (\delta b^{*2} + ab^{*} - R) \right\} \\
		& =:-P(R,b^{*})T(R,b^{*}),
	\end{aligned}$
\end{center}
where $P(R,b^{*})=\frac{a+\delta b^{*}}{R[(1+\theta_1 b^{*})(a+\delta b^{*}) + \theta_2 R]}>0$ and $T(R,b^{*})=\theta_2 R^2+K_1 R+K_0$  with $K_1=\delta \theta_1 b^{*2}+(a \theta_1+\delta+\rho \theta_2-\rho \theta_1)b^{*}+a, K_0=\rho b^{*}(a+\delta b^{*})(1+2\theta_1 b^{*})>0.$  It is easy to see that $Tr(J(E^*)) < 0$ if and only if one of the
following conditions holds:
\begin{center}
	$\begin{aligned}
		&\mathbf{(H1)}  \ \ \ \ K_1 \ge 0;\\
		&\mathbf{(H2)}  \ \ \ \ K_1 <0 \ \ \text{and} \ \  T_{min}=T(-\frac{K_1}{2\theta_2},b^*)>0;\\
		&\mathbf{(H3)}  \ \ \ \ (1)\ K_1 <0\ \text{and} \  T_{min}=T(-\frac{K_1}{2\theta_2},b^*)\le 0 \ \text{and}\ (2) \  0<R<R_1 \ \text{or} \ R>R_2,
	\end{aligned}$
\end{center}
where
\begin{equation}\label{eq2.9}
	R_{1,2}=\frac{-K_1\pm \sqrt{K_1^2-4 \theta_2 K_0}}{2 \theta_2}
\end{equation} are the two positive roots of $T(R,b^*)$.
%The properties of the function $T(R,b^{*})$ can be summarized as follows:
%
%(i) If $K_1 \ge 0$, i.e.,\\
%$\mathbf{(H1)}$\ \ \ \ \ \ \ \ \ \ \ \ \ \ \ \ \ \ \ \ \ \ \ \ \  \ \ \ \ \ \ \ \  $\theta_2 \ge \theta_1-\frac{\delta \theta_1 b^{*2}+a\theta_1b^{*}+\delta b^{*}+a}{\rho b^{*}},$\\
%then for any $b^{*}$, we have $T(R,b^{*})>0$.
%
%(ii) If $K_1 <0$ and $T_{min}=T(-\frac{K_1}{2\theta_2},b^*)>0$, i.e.,\\
%$\mathbf{(H2)}$\ \ \ \ \ \ \ \ \ \ \ \ \ \ \ \ \ \ \ \ \ \ \ \ \ \ \ \ \ \ \ \   $ \frac{K_1^2}{4K_0}<\theta_2 < \theta_1-\frac{\delta \theta_1 b^{*2}+a\theta_1b^{*}+\delta b^{*}+a}{\rho b^{*}},$\\
%then for any $b^{*}$, we have $T(R,b^{*})>0$.
%
%(iii) If $K_1 <0$ and $T_{min}=T(-\frac{K_1}{2\theta_2},b^*)\le 0$, from $T(R,b^{*})=0$ we get
%\begin{equation}\label{eq2.9}
%	R_{1,2}=\frac{-K_1\pm \sqrt{K_1^2-4 \theta_2 K_0}}{2 \theta_2}.
%\end{equation}
% Therefore, it is easy to see that $T(R,b^{*})>0$ if and only if the
%following conditions hold,\\
%$\mathbf{(H3)}$\ \ \ \ \ \ \ \ \ \ \   $0<R<R_1$ or $R>R_2$ and $\theta_2 < min\left\{\theta_1-\frac{\delta \theta_1 b^{*2}+a\theta_1b^{*}+\delta b^{*}+a}{\rho b^{*}},\frac{K_1^2}{4K_0}\right\}$.

In summary, we have derived the following results regarding the local stability of the positive equilibria of system (\ref{eq2.1}).

\begin{theorem}\label{thm2.5}
	Assume that all parameters are positive. Then  the following results holds.
\begin{description}
  \item[(i)] If exists, any of the positive equilibria $E_{21}^{*}$ and $E_{32}^{*}$ of (\ref{eq2.1}) is a saddle point, and thus unstable.
  \item[(ii)] If exists, any of the positive equilibria $E_{1}^{*}$, $E_{22}^{*}$, $E_{31}^{*}$ and $E_{33}^{*}$ is locally asymptotically stable, provided that one of the conditions $\mathbf{(H1)}$, $\mathbf{(H2)}$, or $\mathbf{(H3)}$ is satisfied. Here, $b^{*}$ corresponds to  $b_{1}^{*}, \, b_{22}^{*},\,  b_{31}^{*},$ or $\,b_{33}^{*}.$
\end{description}
\end{theorem}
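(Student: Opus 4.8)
The plan is to reduce everything to the elementary classification of equilibria of a planar autonomous system via the trace and determinant of the Jacobian: an equilibrium with $\text{Det}(J)<0$ is a saddle (the eigenvalues being real with opposite signs, since their product is negative), while one with $\text{Det}(J)>0$ and $\text{Tr}(J)<0$ has both eigenvalues with negative real part and is therefore locally asymptotically stable. Since the expressions for $\text{Det}(J(E^{*}))$ and $\text{Tr}(J(E^{*}))$ have already been derived in the text preceding the theorem, the proof is essentially a matter of extracting the correct signs for each of the six positive equilibria.

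For part (i), I would invoke the identity $\text{sign}\,\text{Det}(J(E^{*}))=\text{sign}\,F'(b^{*})$ together with the sign information in \eqref{eq2.4}: at the roots $b_{21}^{*}$ and $b_{32}^{*}$ one has $F'<0$, so $\text{Det}(J(E_{21}^{*}))<0$ and $\text{Det}(J(E_{32}^{*}))<0$, whence each of these equilibria is a saddle and in particular unstable. (The mild point to note is that $b_{21}^{*},b_{32}^{*}$ are simple roots of $F$, which is exactly what makes the inequalities in \eqref{eq2.4} strict and is read off from the shape of $F$ in Fig.~\ref{fig:2.0}(a).) For part (ii), the same identity and \eqref{eq2.4} give $\text{Det}(J(E^{*}))>0$ at each of $b_{1}^{*},b_{22}^{*},b_{31}^{*},b_{33}^{*}$, so for these four equilibria local asymptotic stability is equivalent to the single condition $\text{Tr}(J(E^{*}))<0$.

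To complete part (ii) I would analyze the already-established factorization $\text{Tr}(J(E^{*}))=-P(R,b^{*})T(R,b^{*})$: since $P(R,b^{*})>0$, the requirement $\text{Tr}(J(E^{*}))<0$ is equivalent to $T(R,b^{*})=\theta_{2}R^{2}+K_{1}R+K_{0}>0$. Regarding $T$ as an upward-opening quadratic in $R$ with $\theta_{2}>0$ and constant term $K_{0}>0$, I would split into cases on the sign of $K_{1}$, and when $K_{1}<0$ on the sign of the minimum value $T_{\min}=T(-K_{1}/(2\theta_{2}),b^{*})$; these three alternatives are precisely $\mathbf{(H1)}$, $\mathbf{(H2)}$, and $\mathbf{(H3)}$, the last also using that the roots $R_{1,2}$ of \eqref{eq2.9} are both positive (immediate from $K_{1}<0$, $\theta_{2}>0$, $K_{0}>0$, and $K_{1}^{2}-4\theta_{2}K_{0}\ge0$). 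Combining $\text{Det}(J(E^{*}))>0$ with $\text{Tr}(J(E^{*}))<0$ then gives local asymptotic stability of $E_{1}^{*},E_{22}^{*},E_{31}^{*},E_{33}^{*}$ whenever the corresponding one of $\mathbf{(H1)}$--$\mathbf{(H3)}$ holds with $b^{*}$ taken as $b_{1}^{*},b_{22}^{*},b_{31}^{*}$, or $b_{33}^{*}$. The only step requiring genuine (though still routine) care is the case analysis of the sign of $T(R,b^{*})$, since that is where the somewhat technical hypotheses $\mathbf{(H1)}$--$\mathbf{(H3)}$ are extracted; everything else is bookkeeping with signs computed above.
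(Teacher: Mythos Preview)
Your proposal is correct and follows essentially the same route as the paper: the paper also computes the Jacobian, identifies $\text{sign}\,\text{Det}(J(E^{*}))=\text{sign}\,F'(b^{*})$ via the relation $h'(b^{*})=-\frac{\rho\delta^{2}\theta_{1}}{R[(1+\theta_{1}b^{*})(a+\delta b^{*})+\theta_{2}R]}F'(b^{*})$, applies \eqref{eq2.4} to classify the six equilibria by the sign of the determinant, and then uses the factorization $\text{Tr}(J(E^{*}))=-P(R,b^{*})T(R,b^{*})$ together with the quadratic-in-$R$ case analysis to obtain exactly $\mathbf{(H1)}$--$\mathbf{(H3)}$. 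Indeed, in the paper all of this is carried out in the discussion preceding the theorem, which is then stated as a summary without a separate proof block.
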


\section{Bifurcation analysis}\label{3}
\subsection{Hopf bifurcations}
Hopf bifurcation describes the process in which a stable equilibrium loses its stability and gives rise to a periodic solution as a system parameter varies. It is a fundamental mechanism behind the emergence of oscillatory behavior in dynamical systems. To investigate the conditions under which system (\ref{eq2.1}) exhibits such periodic dynamics, we analyze the occurrence of Hopf bifurcations below.

From the above analysis, we know that there exist values of $R$ near $R_i (i=1, 2)$ such that the corresponding characteristic matrix has a pair of conjugate complex roots, defined by  $\sigma(R)=\alpha(R)\pm i \beta(R)$, which
satisfies $$\alpha(R_i)=0,\quad \beta(R_i)=\sqrt{Det(J(E^{*})},$$
 where $R_i (i=1, 2)$ are defined in \eqref{eq2.9} and $E^{*}$ may be $E^{*}_1, E^{*}_{22}, E^{*}_{31}$, or $E^{*}_{33}$.
Note that
\begin{equation}\label{h}
H(R,b^*):=h(b^{*}) = \frac{\rho}{R} \left( R - ab^{*} - \delta b^{*2} \right) - \frac{\mu (a + \delta b^{*})}{\left( 1 + \theta_1 b^{*} \right) \left( a + \delta b^{*} \right) + \theta_2 R}=0,
\end{equation}
where $b^{*}=b^{*}(R)$. Differentiating \eqref{h} with respect to $R$, we have
$$
 \frac{d b^{*}(R)}{d R}=-\frac{\partial H}{\partial R}\,\big/\,\frac{\partial H}{\partial b^{*}}=-\frac{\partial H}{\partial R}\,\big/\,h'(b^{*}).
$$
On the other hand, a direct calculation shows that
$$\frac{\partial H}{\partial R}=\frac{\rho b^{*}(a+\delta b^{*})}{R^2}+\frac{\mu \theta_2(a+\delta b^{*})}{[(1+\theta_1 b^{*})(a+\delta b^{*})+\theta_2 R]^2}>0.
$$
This combining with $h'(b^{*})<0$ implies that
$$\frac{d b^{*}(R)}{d R}>0.$$

Let $b_1=b^{*}(R_1)$ and $b_2=b^{*}(R_2)$. Then we conclude that $b_1<b_2$. According to the expression of  $T(R,b^{*})$, we obtain
$$
\frac{\partial T(R,b^{*})}{\partial R}\bigg|_{R=R_1} < 0, \,\,
\frac{\partial T(R,b^{*})}{\partial R}\bigg|_{R=R_2} > 0, \,\,
\frac{\partial T(R,b^{*})}{\partial b}\bigg|_{b=b_1} < 0, \,\,
\frac{\partial T(R,b^{*})}{\partial b}\bigg|_{b=b_2} > 0.
$$
Therefore, we can derive the transversality condition
\begin{align*}
	\alpha '(R_i) &= \frac{1}{2} \frac{\partial Tr(J(E^{*}))}{\partial R} \bigg|_{R=R_i} \\
	&= -\frac{1}{2} P(R, b^{*}) \left( \frac{\partial T(R, b^{*})}{\partial R} + \frac{\partial T(R, b^{*})}{\partial b} \, \frac{d b^{*}(R)}{d R} \right) \bigg|_{R=R_i} \neq 0, \quad i=1,2.
\end{align*}
Specifically, $\alpha '(R_1)>0$ and $\alpha '(R_2)<0$. Consequently, we conclude that a Hopf bifurcation occurs at $R=R_i, i = 1, 2.$
\begin{theorem}\label{thm3.1}
	Assume that $R=R_i, i=1,2$  are defined  as in (\ref{eq2.9}). Then system (\ref{eq2.1}) undergoes  Hopf bifurcations at $R=R_1$ and $R=R_2$ provided these values exist.
\end{theorem}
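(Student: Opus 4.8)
The plan is to verify the standard Hopf bifurcation conditions at $R=R_i$, $i=1,2$: (a) the Jacobian $J(E^{*})$ has a pair of purely imaginary eigenvalues at $R=R_i$; (b) there is no zero eigenvalue; and (c) the transversality condition $\alpha'(R_i)\neq 0$ holds. Most of the groundwork is already in place in the text preceding the statement, so the proof is essentially a matter of assembling these pieces into a clean argument invoking the classical Hopf bifurcation theorem.

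First I would record that, for $E^{*}$ one of $E_1^{*}, E_{22}^{*}, E_{31}^{*}, E_{33}^{*}$, we have $\mathrm{Det}(J(E^{*}))>0$ by the sign analysis via $F'(b^{*})>0$ in \eqref{eq2.4}. Next, by the factorization $\mathrm{Tr}(J(E^{*}))=-P(R,b^{*})T(R,b^{*})$ with $P>0$, and since $R_1,R_2$ are precisely the positive roots of the quadratic $T(R,b^{*})=\theta_2 R^2+K_1R+K_0$ in $R$ (treating $b^{*}=b^{*}(R)$), the trace vanishes at $R=R_i$. Hence at $R=R_i$ the characteristic polynomial $\lambda^2-\mathrm{Tr}(J)\lambda+\mathrm{Det}(J)$ reduces to $\lambda^2+\mathrm{Det}(J(E^{*}))=0$, giving the simple pair of purely imaginary eigenvalues $\sigma(R_i)=\pm i\sqrt{\mathrm{Det}(J(E^{*}))}$, and in particular no zero eigenvalue. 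This establishes (a) and (b).

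For the transversality condition, I would write $\sigma(R)=\alpha(R)\pm i\beta(R)$ for $R$ near $R_i$, so that $2\alpha(R)=\mathrm{Tr}(J(E^{*}))$ along the branch, and therefore $\alpha'(R_i)=\tfrac12\,\tfrac{d}{dR}\mathrm{Tr}(J(E^{*}))\big|_{R=R_i}$. Using $\mathrm{Tr}=-PT$ and $T(R_i,b^{*}(R_i))=0$, the product rule kills the $P'$ term at $R=R_i$, leaving
\[
\alpha'(R_i)=-\tfrac12 P(R_i,b^{*})\Big(\tfrac{\partial T}{\partial R}+\tfrac{\partial T}{\partial b}\,\tfrac{db^{*}}{dR}\Big)\Big|_{R=R_i}.
\]
The key inputs, already derived above, are: $db^{*}/dR>0$ (from $\partial H/\partial R>0$ and $h'(b^{*})<0$); and the sign pattern $\partial T/\partial R|_{R_1}<0$, $\partial T/\partial b|_{b_1}<0$, while $\partial T/\partial R|_{R_2}>0$, $\partial T/\partial b|_{b_2}>0$ (which follow from $b_1=b^{*}(R_1)<b_2=b^{*}(R_2)$ together with the explicit form of $T$, a downward-then-upward crossing of its quadratic). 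Consequently the bracketed quantity is strictly negative at $R_1$ and strictly positive at $R_2$, so $\alpha'(R_1)>0$ and $\alpha'(R_2)<0$; in particular $\alpha'(R_i)\neq 0$. Invoking the Hopf bifurcation theorem then yields the conclusion.

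The only genuinely nontrivial step is justifying the sign pattern of the partials of $T$ at the two crossing points; the subtlety is that $T$ is a quadratic in $R$ whose coefficient $K_1$ depends on $b^{*}$, which in turn depends on $R$, so one must be careful that "$\partial T/\partial R$" means the partial with $b^{*}$ held fixed, evaluated along the branch, and that the ordering $R_1<R_2$ (hence $b_1<b_2$) pins down the signs. Once that bookkeeping is made explicit — essentially noting that a convex parabola in $R$ has negative slope at its smaller root and positive slope at its larger root, and that $K_0>0$ forces both roots positive when they are real — everything else is routine and the theorem follows.
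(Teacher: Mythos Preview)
Your proposal is correct and follows essentially the same route as the paper: verify $\mathrm{Det}(J(E^{*}))>0$ and $\mathrm{Tr}(J(E^{*}))=0$ at $R=R_i$ to get the purely imaginary pair, then check transversality via $\alpha'(R_i)=-\tfrac12 P\big(\partial_R T+\partial_b T\cdot db^{*}/dR\big)$, using $db^{*}/dR>0$ and the sign pattern of the partials of $T$ at the two roots to conclude $\alpha'(R_1)>0$, $\alpha'(R_2)<0$. Your remark that the $P'$ term drops out because $T(R_i,b^{*})=0$ is a nice clarification the paper leaves implicit; the only place to be a touch more careful is your justification of the signs of $\partial T/\partial b$ at $b_1,b_2$ (your parabola argument handles $\partial T/\partial R$, not $\partial T/\partial b$, and the paper itself merely asserts these signs ``according to the expression of $T$'').
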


\subsection{Turing instability induced by diffusion and Turing$-$Hopf bifurcation}
In 1952, Turing \cite{Turing} demonstrated that a system of coupled reaction-diffusion (R-D) equations could be used to describe pattern formation in biological systems. Turing's theory shows that the interplay between chemical reactions and diffusion may cause a locally stable equilibrium to become unstable in the presence of diffusion, leading to the spontaneous formation of a spatially periodic stationary structure. This kind of instability is called Turing instability or diffusion-driven instability.

In this part, we proceed with the analysis of the diffusive water-vegetation model (\ref{eq1.1}) with \( \mu(w,b) \) given by (\ref{dr}). Our goal is to determine whether the spatially uniform steady state becomes unstable under diffusion. Specifically, we examine the conditions for Turing instability in the one-dimensional spatial domain \( \Omega = (0, l \pi) \).

%In this subsection, we proceed with the analysis of the diffusive water-vegetation model (\ref{eq1.1}) with $\mu(w,b)$ given by \eqref{dr}, focusing on diffusion-induced instability, which refers to the loss of stability of the uniform steady state due to diffusion effects. Specifically, we consider system (\ref{eq1.1}) in the spatial domain  $\Omega =(0, l \pi)$.

Let $(w^{*},b^{*})$ denote a stationary homogeneous solution of system (\ref{eq1.1}). The linearized operator of system  (\ref{eq1.1}) at $(w^{*},b^{*})$ is given by
\begin{center}
	$L =
	\begin{pmatrix}
		 d_1\frac{\partial}{\partial x^2}+a_{11} & a_{12} \\
		a_{21} &  d_2\frac{\partial}{\partial x^2}+a_{22}
	\end{pmatrix},$
\end{center}
where $a_{ij}$ are defined as in (\ref{eq2.8}).
It is well known that the eigenvalue problem
$$
-\Phi''=\rho \,\Phi,\,x \in (0,l \pi),\,\Phi'(0)=\Phi'(l \pi)=0,
$$
has eigenvalues $\rho_k={k^2}/{l^2},\,k=0,\,1,\,2,$..., with corresponding eigenfunctions $\Phi_k(x)=\text{cos}(\frac{k}{l} x) $. Let
\begin{center}
	$\begin{pmatrix}
		 \phi \\ \psi
	\end{pmatrix} =  \sum\limits_{k=0}^{\infty}
	\begin{pmatrix}
		 \phi_k \\ \psi_k
	\end{pmatrix} \text{cos}\frac{k}{l} x$
\end{center}
denote an eigenfunction of $L$ corresponding to the eigenvalue $\lambda$, i.e., $L(\phi, \psi)^T = \lambda (\phi, \psi)^T$. Then a straightforward analysis  shows that $L_k(\phi_k, \psi_k)^T = \lambda (\phi_k, \psi_k)^T$ for $k \in \mathbb{N}_0$, where $L_k$ is defined as
\begin{center}
	$L_k =
	\begin{pmatrix}
		 -d_1\frac{k^2}{l^2}+a_{11} & a_{12} \\
		a_{21} &  -d_2\frac{k^2}{l^2}+a_{22}
	\end{pmatrix}.$
\end{center}
The characteristic equation of $L_k$ in $\lambda$ is
\begin{equation}\label{eq3.1}
	\Delta_{k}(\lambda)=\lambda^2-T_k \,\lambda +J_k=0,\quad k=0,\,1,\,2,\cdots\ ,
\end{equation}
where
\begin{equation}\label{eq3.2}
	\begin{aligned}
		T_k &= -(d_1+d_2)\frac{k^2}{l^2} + T_0 = -(d_1+d_2)\frac{k^2}{l^2} + \text{Tr}(J(E^{*})), \\
		J_k &= d_1 d_2 \frac{k^4}{l^4} - (d_1 a_{22} + d_2 a_{11}) \frac{k^2}{l^2} + J_0 \\
		&= d_1 d_2 \frac{k^4}{l^4} - (d_1 a_{22} + d_2 a_{11}) \frac{k^2}{l^2} + \text{Det}(J(E^{*})).
	\end{aligned}
\end{equation}

From Theorem \ref{thm2.5}, we can conclude that in the absence of diffusion (i.e., $d_1=d_2=0$), when any of the conditions $\mathbf{(H1)}, \mathbf{(H2)}, \mathbf{(H3)}$ is satisfied, $E^{*}$ is stable. Here, $E^{*}$ may be $E_{1}^{*}$, $E_{22}^{*}$, $E_{31}^{*}$, or $E_{33}^{*}$. This implies that
\begin{equation}\label{eq3.3}
	T_0=\text{Tr}(J(E^{*}))<0,\quad J_0=\text{Det}(J(E^{*}))>0.
\end{equation}
Therefore,  for each positive integer $k$ , we have $T_k<0$. To make $E^{*}$ unstable, we must find at least one positive integer $k$ such that $J_k < 0$, in which case Turing instability may occur.

Based on the above analysis, for the positive steady state $E^{*}$, the key to Turing instability in system (\ref{eq1.1}) is to find the critical wave number $k_{*}$ such that $J_{k_{*}} = 0$, while $J_k > 0$ for $k \neq k_{*}$.
\begin{theorem}\label{thm3.2}
	Assume that one of the conditions $\mathbf{(H1)}$, $\mathbf{(H2)}$, $\mathbf{(H3)}$ is satisfied. Then there is no diffusion-driven Turing instability if one of the following conditions holds:\\
$\mathbf{(H4)}$\ \ \ \ \ \ \ \ \ \ \ \ \ \ \ \ \ \ \ \ \ \ \ \ \ \ \ \ \ \ \ \   $ a_{22} \le 0$;\\
$\mathbf{(H5)}$\ \ \ \ \ \ \ \ \ \ \ \ \ \ \ \ \ \ \ \ \ \ \ \ \ \ \ \ \ \ \ \   $a_{22} > 0 \,\, \text{and} \,\, d_1 \le -\frac{d_2 a_{11}}{a_{22}}. $
\end{theorem}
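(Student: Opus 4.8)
The plan is to show that under either $\mathbf{(H4)}$ or $\mathbf{(H5)}$, we have $J_k > 0$ for every positive integer $k$, so that no critical wave number $k_*$ with $J_{k_*}=0$ can exist, and hence (since $T_k<0$ for all $k\geq 0$ by \eqref{eq3.3} and the remark following it) the steady state $E^*$ remains locally asymptotically stable in the presence of diffusion. Throughout, write $z = k^2/l^2 > 0$ and view
\begin{equation*}
	J_k = d_1 d_2 z^2 - (d_1 a_{22} + d_2 a_{11}) z + J_0 =: \mathcal{J}(z)
\end{equation*}
as a quadratic in $z$ with positive leading coefficient $d_1 d_2 > 0$ and positive constant term $J_0 = \text{Det}(J(E^*)) > 0$ (recall $d_1,d_2>0$ are diffusion rates, and $J_0>0$ by \eqref{eq3.3}).

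First, under $\mathbf{(H4)}$, i.e. $a_{22}\le 0$: recall from \eqref{eq2.8} that $a_{11} = -a-\delta b^* < 0$. Hence $d_1 a_{22} + d_2 a_{11} \le d_2 a_{11} < 0$, so $-(d_1 a_{22}+d_2 a_{11}) > 0$. Then every term in $\mathcal{J}(z)$ is strictly positive for $z>0$, giving $J_k>0$ for all $k\ge 1$ immediately. Second, under $\mathbf{(H5)}$, i.e. $a_{22}>0$ and $d_1 \le -d_2 a_{11}/a_{22}$ (the right-hand side being positive since $a_{11}<0$): this is precisely the condition $d_1 a_{22} \le -d_2 a_{11}$, i.e. $d_1 a_{22} + d_2 a_{11} \le 0$, so again the linear coefficient of $\mathcal{J}$ is nonnegative and all terms of $\mathcal{J}(z)$ are nonnegative with $J_0>0$, yielding $J_k>0$ for all $k\ge1$. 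In both cases the parabola $\mathcal{J}$ has either a negative vertex abscissa or is monotone increasing on $(0,\infty)$, so it stays above $J_0>0$.

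With $J_k>0$ for all $k\ge 1$ and $T_k<0$ for all $k\ge 0$ established, the roots of each characteristic equation $\Delta_k(\lambda)=\lambda^2 - T_k\lambda + J_k = 0$ have negative real parts (the product of roots is $J_k>0$ and the sum is $T_k<0$), while for $k=0$ the stability of $E^*$ is the hypothesis carried over from one of $\mathbf{(H1)}$–$\mathbf{(H3)}$. Therefore $E^*$ is locally asymptotically stable for the full diffusive system \eqref{eq1.1}, and no diffusion-driven instability occurs. This is essentially all sign-chasing on the coefficients $a_{11}, a_{22}$ and the quadratic $\mathcal{J}$; I do not anticipate a genuine obstacle, the only point requiring care being to handle the two sign regimes of $a_{22}$ separately and to use $a_{11}<0$ in each, together with positivity of $d_1,d_2$ and of $J_0$.
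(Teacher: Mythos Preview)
Your proposal is correct and follows essentially the same approach as the paper: both arguments reduce $\mathbf{(H4)}$ and $\mathbf{(H5)}$ to the single inequality $d_1 a_{22}+d_2 a_{11}\le 0$ (using $a_{11}<0$), and then observe that the quadratic $J_k$ in $k^2/l^2$ has nonnegative linear coefficient and positive constant term $J_0$, forcing $J_k>0$ for all $k\ge 1$. Your write-up is simply more explicit about the sign-chasing and adds the concluding Routh--Hurwitz step, but the core idea is identical.
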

\begin{proof}
	It is straightforward to verify from (\ref{eq3.2}) that
	$d_1a_{22}+d_2a_{11} \le 0$ if and only if  $\mathbf{(H4)}$ or $\mathbf{(H5)}$ holds. Moreover, since
	$J_0 > 0$, it follows that if either $ \mathbf{(H4)}$ or $\mathbf{(H5)}$ is satisfied, then
	$J_k > 0$ for all positive integers $k$.
\end{proof}
\begin{theorem}\label{thm3.3}
	Assume that \\
$\mathbf{(H6)}$\ \ \ \ \ \ \ \ \ \ \ \ \ \ \ \ \ \ \ \ \ \ \ \ \ \ \ \ \ \ \ \   $a_{22} > 0 \,\, \text{and} \,\, d_1 > -\frac{d_2 a_{11}}{a_{22}}$\\
is satisfied. Then for the diffusive water-vegetation model (\ref{eq1.1}) with $\mu(w,b)$ given by \eqref{dr}, we have the following results.
	\begin{description}
   \item[(i)]  When either  $\mathbf{(H1)}$ or $\mathbf{(H2)}$ is satisfied, the boundary of the stability region in the $ R - d_1 $ plane consists of the critical Turing bifurcation curve $ d_1 = d_1(R, k_{*}^{2}) $, and no TH bifurcation occurs in this case.
   \item[(ii)] When $\mathbf{(H3)}(1)$ is satisfied, the boundary of the stability region in the $ R - d_1 $ plane consists of Hopf bifurcation curves $ H_j $, which are determined by $ R = R_i $, $ i = 1, 2 $, and the critical Turing bifurcation curve $ d_1 = d_1(R, k_{*}^{2}) $. In this scenario, a $(k_*, 0)-$mode TH bifurcation can occur at the interaction points $ (R_i, d_1^{*}) $, where $ d_1^{*} = d_1(R_i, k_{*}^{2}) $. The function $ d_1(R_i, k_{*}^{2}) $ is determined by the following equation (\ref{eq3.4}), and $ k_* $ is determined by the following equation (\ref{eq3.6}).
 \end{description}
\end{theorem}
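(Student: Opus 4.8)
\emph{Proof proposal.} The plan is to read the boundary of the stability region of $E^{*}$ directly off the dispersion relation \eqref{eq3.1}--\eqref{eq3.2}, distinguishing the two ways an eigenvalue of $L_k$ can cross the imaginary axis: a sign change of $T_k$ (which, since $T_k=T_0-(d_1+d_2)k^2/l^2$, can occur only for $k=0$) and a sign change of $J_k$ (a Turing mode, possible for only finitely many $k$ because $a_{22}>0$). First I would record the elementary consequences of $\mathbf{(H1)}$--$\mathbf{(H3)}$ together with \eqref{eq3.3}: whenever $E^{*}$ is stable or on the stability boundary one has $T_0=\mathrm{Tr}(J(E^{*}))=-P(R,b^{*})T(R,b^{*})\le 0$ and $J_0=\mathrm{Det}(J(E^{*}))>0$, hence $T_k<0$ for every $k\ge 1$; consequently, for $k\ge 1$ instability of $E^{*}$ can arise only through $J_k<0$, and the only mode that can ever produce a purely imaginary pair is $k=0$.

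\emph{Part (i).} Under $\mathbf{(H1)}$ or $\mathbf{(H2)}$ the quadratic $T(R,b^{*})=\theta_2R^2+K_1R+K_0$ is positive for \emph{all} $R>0$ (in case $\mathbf{(H1)}$ because $K_1\ge 0$, in case $\mathbf{(H2)}$ because its minimum value is positive), so $T_0<0$ identically; thus no $T_k$ ever changes sign, no Hopf and hence no Turing--Hopf bifurcation can occur, and $E^{*}$ can only destabilise via $J_k=0$. I would then analyse $J$ as a function of the continuous variable $z=k^2/l^2$, namely $J(z)=d_1d_2z^2-(d_1a_{22}+d_2a_{11})z+J_0$: condition $\mathbf{(H6)}$ is exactly $d_1a_{22}+d_2a_{11}>0$, i.e. the vertex $z_{\min}=(d_1a_{22}+d_2a_{11})/(2d_1d_2)$ is positive, and $\min_z J(z)=J_0-(d_1a_{22}+d_2a_{11})^2/(4d_1d_2)$ becomes negative once $d_1$ exceeds the threshold given by $(d_1a_{22}+d_2a_{11})^2=4d_1d_2J_0$. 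Restricting $z$ to the admissible integer modes, the smallest value of $d_1$ at which some $J_k$ vanishes defines both the critical wave number $k_{*}$ (equation \eqref{eq3.6}) and the critical Turing curve $d_1=d_1(R,k_{*}^{2})$ obtained by solving $J_{k_{*}}=0$ for $d_1$ (equation \eqref{eq3.4}); below this curve all $J_k>0$ so $E^{*}$ is stable, on it $J_{k_{*}}=0$ while $J_k>0$ for $k\neq k_{*}$, so the stability boundary in the $R$--$d_1$ plane is exactly this single curve.

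\emph{Part (ii).} Under $\mathbf{(H3)}(1)$ the quadratic $T(R,b^{*})$ has two positive roots $R_1<R_2$ given by \eqref{eq2.9}, with $T_0<0$ for $R\in(0,R_1)\cup(R_2,\infty)$ and $T_0=0$ at $R=R_i$. Hence, on the stable side, as $R\to R_i$ the $k=0$ eigenvalue pair reaches $\pm i\sqrt{J_0}$ and, by the transversality $\alpha'(R_i)\neq 0$ established in Theorem~\ref{thm3.1}, a Hopf bifurcation occurs; this produces the vertical Hopf curves $H_j:\ R=R_i$ in the $R$--$d_1$ plane, while $T_k<0$ for $k\ge 1$ excludes higher-mode Hopf. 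Together with the Turing curve $d_1=d_1(R,k_{*}^{2})$ of part (i), these bound the stability region. At an intersection point $(R_i,d_1^{*})$ with $d_1^{*}=d_1(R_i,k_{*}^{2})$ I would verify the spectral picture defining a $(k_{*},0)$-mode Turing--Hopf point: $L_0$ has the \emph{simple} imaginary pair $\pm i\sqrt{J_0}$ (since $T_0=0$, $J_0>0$); $L_{k_{*}}$ has a \emph{simple} zero eigenvalue, because $\det L_{k_{*}}=J_{k_{*}}=0$ while $\mathrm{tr}\,L_{k_{*}}=T_{k_{*}}<0$ forces $L_{k_{*}}$ to have rank one; and every other $L_k$ has both eigenvalues in the open left half-plane ($T_k<0$, $J_k>0$ strictly). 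Transversality in $d_1$ follows from $\partial J_{k_{*}}/\partial d_1=(k_{*}^{2}/l^{2})(d_2k_{*}^{2}/l^{2}-a_{22})\neq 0$ at the critical point, and in $R$ from $\alpha'(R_i)\neq 0$; this confirms $(R_i,d_1^{*})$ is a genuine codimension-two $(k_{*},0)$-mode Turing--Hopf bifurcation point.

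The step I expect to be the main obstacle is the ``integer effect'' underlying the critical Turing curve: one must show that among the countably many threshold curves $\{d_1:\ J_k(R,d_1)=0\}$ (indexed by those $k$ with $k^2<a_{22}l^{2}/d_2$, for which $J_k$ is decreasing in $d_1$) there is, over the relevant $R$-interval, a \emph{single} mode $k_{*}$ realising the envelope, so that $d_1(R,k_{*}^{2})$ is genuinely the stability boundary and $J_k>0$ \emph{strictly} for all $k\neq k_{*}$ on it; this requires ordering the thresholds $d_1(R,k^{2})$ via the monotonicity of the parabola $J(z)$ on either side of its vertex together with $\mathbf{(H6)}$, and pinning down $k_{*}$ through \eqref{eq3.6}. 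A secondary technical point is checking that the candidate points $(R_i,d_1^{*})$ actually lie in the admissible parameter region where the relevant $E^{*}$ exists (via Theorem~\ref{thm2.2}) and where the stability boundary has the asserted shape; the remaining steps are routine $2\times2$ linear algebra on the matrices $L_k$ and the Fourier decomposition along $\{\cos(kx/l)\}_{k\ge 0}$.
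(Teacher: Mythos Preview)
Your proposal is correct and follows essentially the same strategy as the paper: reading the stability boundary off the dispersion relation $\Delta_k(\lambda)=\lambda^2-T_k\lambda+J_k$, separating the $k=0$ Hopf mechanism (governed by the sign of $T_0=-P(R,b^{*})T(R,b^{*})$) from the $k\ge 1$ Turing mechanism (governed by the sign of $J_k$), and identifying the critical wave number via the ``integer effect'' you describe. The only organisational difference is that the paper solves $J_k=0$ explicitly for $d_1$ to obtain the rational function $d_1(R,z)=l^2(B_1z+B_2)/[z(d_2z-B_3)]$ of $z=k^2$, computes its derivative, and shows it is positive for $z>B_3/d_2$ (hence $d_1<0$ there, ruling out those modes) and attains a minimum at an explicit $z_*$ for $z<B_3/d_2$; you instead view $J$ as a downward-opening-in-$d_1$ family of parabolas in $z$ and look for the first $d_1$ at which some $J_k$ vanishes. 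These are dual descriptions of the same neutral curve, and your spectral verification at the candidate TH point (simplicity of the zero eigenvalue of $L_{k_*}$ via $T_{k_*}<0$, simplicity of $\pm i\sqrt{J_0}$, and transversality in both parameters) is in fact more explicit than what the paper records.
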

\begin{proof}
We begin by analyzing the conditions for Turing bifurcation. For $ E^{*} = (w^{*}, b^{*}) $, from $ J_k = 0 $, we obtain the following expression for $ d_1 $:
\begin{equation}\label{eq3.4}
	d_1 = d_1(R, k^2):=\frac{l^2(B_1 k^2 + B_2)}{k^2 (d_2 k^2 - B_3)}, \,\, \text{for} \,\, d_2 k^2 \neq B_3,
\end{equation}
where $ B_1 = d_2 a_{11} < 0 $, $ B_2 = (a_{12} a_{21} - a_{11} a_{22}) l^2 < 0 $, and $ B_3 = a_{22} l^2 > 0 $.

Let $ z = k^2 $, then
$
d_1(R, z) = \frac{l^2(B_1 z + B_2)}{z(d_2 z - B_3)}.
$
Next, by differentiating $ d_1(R, z) $ with respect to $ z $, we obtain:
$$
\frac{\partial d_1}{\partial z}(R, z) = l^2 \frac{-d_2 B_1 z^2 - 2d_2 B_2 z + B_2 B_3}{z^2 (d_2 z - B_3)^2}.
$$

To analyze the sign of $ J_k $ and determine the range of the wave number $ k $, we consider the following two cases.

\textbf{Case I}: $ z > \frac{B_3}{d_2} $.

In this case, the behavior of $ J_k $ is as follows:
$$
J_k
\begin{cases}
	< 0, & \quad d_1 < d_1(R, k^2), \\
	= 0, & \quad d_1 = d_1(R, k^2), \\
	> 0, & \quad d_1 > d_1(R, k^2).
\end{cases}
$$
Additionally, since
$$
-d_2 B_1 z^2 - 2d_2 B_2 z + B_2 B_3 > -\frac{B_1 B_3^2}{d_2} - B_2 B_3 > 0,
$$
it follows that
$
\frac{\partial d_1}{\partial z}(R, z) > 0,
$
which implies that the function $ d_1(R, z) $ is monotonically increasing for $ z > \frac{B_3}{d_2} $. Furthermore, we know that $ d_1(R, z) < 0 $ and
$
\lim\limits_{z \to +\infty} d_1(R, z) = 0.
$
Thus, for any $ R > 0 $ and $ d_1 > 0 $, we conclude that $ J_k > 0 $. This means that system (\ref{eq1.1}) does not branch from $ E^{*} $ when the wave number $ k^2 > \frac{B_3}{d_2} $.

\textbf{Case II}: $ z < \frac{B_3}{d_2} $.

In this case, the behavior of $ J_k $ is as follows:
$$
J_k
\begin{cases}
	> 0, & \quad d_1 < d_1(R, k^2), \\
	= 0, & \quad d_1 = d_1(R, k^2), \\
	< 0, & \quad d_1 > d_1(R, k^2).
\end{cases}
$$
By direct calculation, we have
$$
\frac{\partial d_1}{\partial z}(R, z)
\begin{cases}
	< 0, & \quad 0 < z < z_{*}, \\
	> 0, & \quad z > z_{*},
\end{cases}
$$
where
\begin{equation}\label{eq3.5}
	\begin{aligned}
		z_{*} &= \frac{-d_2 B_2 - \sqrt{d_2^2 B_2^2 + d_2 B_1 B_2 B_3}}{d_2 B_1} \\
		&= l^2 \frac{a_{11} a_{22} - a_{12} a_{21} - \sqrt{-a_{12} a_{21} (a_{11} a_{22} - a_{12} a_{21})}}{d_2 a_{11}}.
	\end{aligned}
\end{equation}
Thus, $ d_1(R, z) $ attains its minimum at $ z = z_{*} $ for fixed $ R $. Additionally, we have $ d_1(R, z) > 0 $, and
$$
\lim_{z \to 0^{+}} d_1(R, z) = \lim_{z \to \left( \frac{B_3}{d_2} \right)^{-}} d_1(R, z) = +\infty.
$$
Refer to Fig. \ref{fig0}, which shows the relationship between $ d_1(R, k^2) $ and $ k^2 $. In summary, the range of wave numbers for which Turing instability occurs is $ k^2 < \frac{B_3}{d_2} $. When $ d_1 > d_1(R, k^2) $, we have $ J_k < 0 $. Therefore, we aim to find the wave number that minimizes $ d_1(R, k^2) $. Since the wave number $ k $ takes positive integer values, $ d_1(R, k^2) $ achieves its minimum at $ k = k_{*} $. The value of $ k_{*} $ is given by
\begin{equation}\label{eq3.6}
	k_{*} =
	\begin{cases}
		k_0, & \quad d_1(R, k_0^2) \le d_1(R, (k_0 + 1)^2), \\
		k_0 + 1, & \quad d_1(R, k_0^2) > d_1(R, (k_0 + 1)^2).
	\end{cases}
\end{equation}
Here, $ k_0 = \left[ \sqrt{z_{*}} \right] $, where $ \left[\cdot\right] $ denotes the integer part function.
\begin{figure}[!t]
	\centering
	% \centering
	\includegraphics[width=6cm,height=5.5cm]{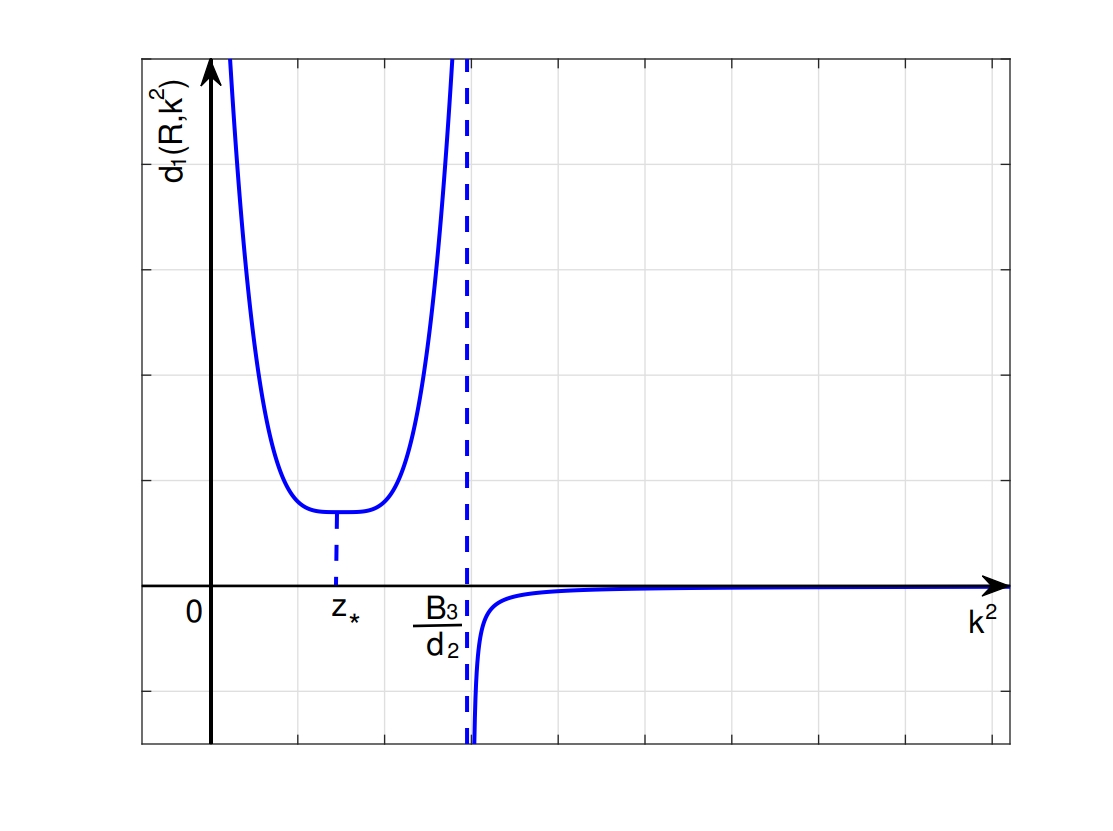}
	%\subfigure[]{\includegraphics[width=0.45\textwidth]{Figure/pde/d1-k.eps}} \hspace{0.1pt}
	%\subfigure[]{\includegraphics[width=0.46\textwidth]{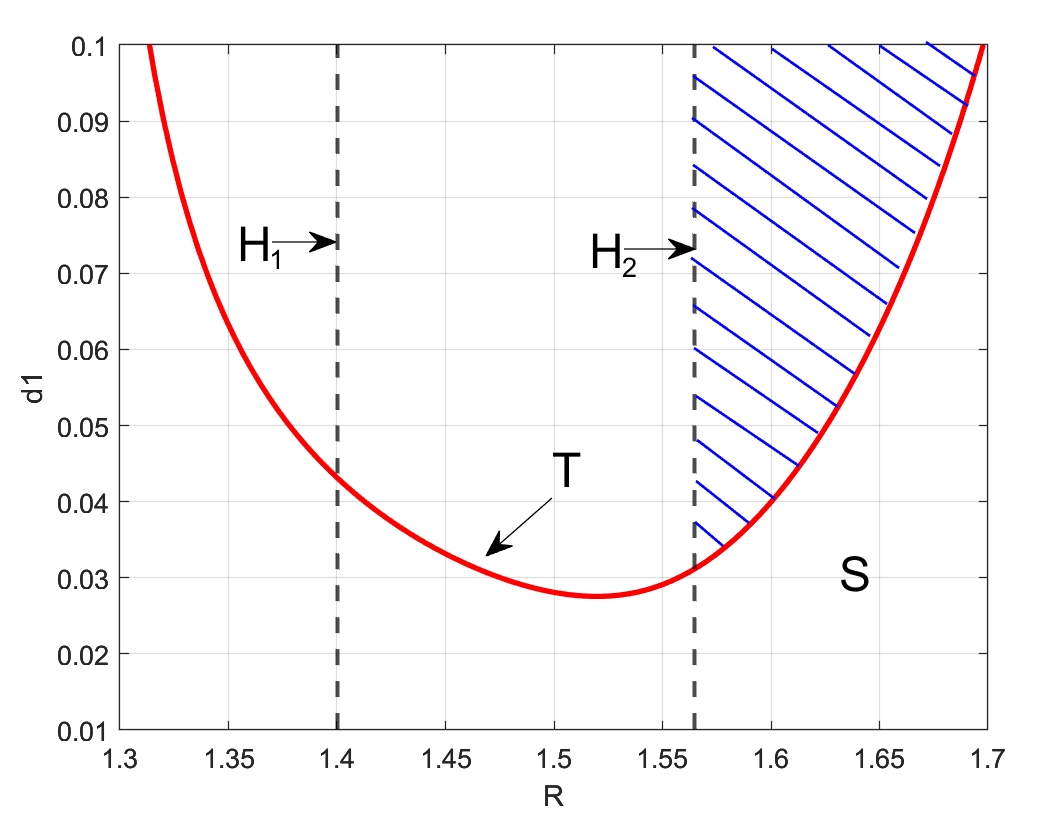}}
	\caption{The schematic of $d_1(R,k^2)$ with respect to $k^2$.}
	\label{fig0}
\end{figure}

Additionally, based on Theorem \ref{thm2.5}, when either condition $\mathbf{(H1)}$ or $\mathbf{(H2)}$ is satisfied, there is no Hopf bifurcation for the diffusive water-vegetation model (\ref{eq1.1}). This, along with the above discussions
on Turing bifurcation, confirms conclusion $\mathbf{(i)}$.

When condition $\mathbf{(H3)}(1)$ is satisfied,, we also find that $T_0 > 0$ for $R_1<R<R_2$ and $T_0 < 0$ for $R<R_1$ or $R>R_2$. Therefore, there is no Hopf bifurcation for $R<R_1$ or $R>R_2$ in the $R-d_1$ plane. Consequently, $R = R_1$ and $R = R_2$ are two critical Hopf bifurcation curves. This, along with the above discussions on Turing bifurcation and Theorem \ref{thm3.1}, completes the proof of conclusion $\mathbf{(ii)}$.
\end{proof}

\section{Normal forms for Turing$-$Hopf bifurcation}\label{4}
What kinds of dynamical behavior will the system present near the TH points $(R_{i}, d_1^{*}) (i=1,2)$? In this section, we aim to determine the dynamical classification of system (\ref{eq1.1}) near these bifurcation points. We will derive the normal forms of the TH bifurcation for system (\ref{eq1.1}) at the positive equilibrium $E^{*}=(w^{*},b^{*})$. It is important to note that both $w^{*}$ and $b^{*}$ depend on $R$, and $E^{*}$ will change with the small perturbation of $R$, which increases the difficulty of calculating the normal form.

First, we make the change of variables  to translate $(w^{*},b^{*})$ into the origin by $\overline{w}=w-w^{*}, \overline{b}=b-b^{*}$
and drop the bars. Then, we can rewrite (\ref{eq1.1}) as follows:
	 \begin{equation}\label{eq4.1}
	\begin{cases}
		\displaystyle \frac{\partial w}{\partial t}=d_1 \Delta w+R-a(w+w^{*})-\delta (w+w^{*})(b+b^{*}),\\
		\displaystyle \frac{\partial b}{\partial t}=d_2 \Delta b+\rho (b+b^{*})(1-\frac{b+b^{*}}{w+w^{*}})-\frac{\mu(b+b^{*})}{1+\theta_1 (b+b^{*})+\theta_2 (w+w^{*})}.
	\end{cases}
\end{equation}
For convenience, we introduce the perturbation parameters $\varepsilon_1, \varepsilon_2$ by setting
$$R=R^{*}+\varepsilon_1,\ \ \ d_1 = d_1^{*} + \varepsilon_2,$$
 where $R^{*}=R_{i} (i=1,2)$.
Since $\varepsilon_1 \ll 1$, ignoring higher-order terms, $w^{*}(\varepsilon_1)$ and $b^{*}(\varepsilon_1)$ in $E^{*}$ can be expressed as
$$w^{*}(\varepsilon_1) \approx w^{*}+\varepsilon_1 \tilde{w},\quad b^{*}(\varepsilon_1) \approx b^{*}+\varepsilon_1 \tilde{b},$$
where
$$\tilde{w}=\frac{a+\delta b^{*}-\delta R^{*} \tilde{b}}{(a+\delta b^{*})^2},$$
$$\tilde{b}=\frac{\rho\delta  \left(\theta_1-\theta_2\right) b^{*2}+\rho\left[\delta +\left(\theta_1-\theta_2\right) a\right] b^*+2\rho R^* \theta_2+a\rho  -\mu  \left(b^* \delta +a\right)}{M},$$
with
$$
\begin{aligned}
	M = &4 b^{*3} \rho\delta^{2} \theta_1+\rho\left(6 a \delta  \theta_1+3 \delta^{2}\right) b^{*2}+\rho\left[\left(-2 R^* \theta_1+2 R^* \theta_2+4 a\right) \delta +2 a^{2} \theta_1\right] b^*\\
	& -\rho R^* \delta -a\rho \left[-a+R^* \left(\theta_1-\theta_2\right)\right]  +R^* \delta  \mu.
\end{aligned}
$$
Then, system (\ref{eq4.1}) becomes
\begin{equation}\label{eq4.2}
	\begin{cases}
		\begin{aligned}
			\frac{\partial w}{\partial t} &= (d_1^{*} + \varepsilon_2)\Delta w + A_{11}w + A_{12}b + A_{13}w^2 + A_{14}wb + A_{15}b^2 + A_{16}w^3 \\
			&\quad +A_{17}w^2b +  A_{18}wb^2 + A_{19}b^3,
		\end{aligned} \\
		\begin{aligned}
			\frac{\partial b}{\partial t} &= d_2 \Delta b + A_{21}w + A_{22}b + A_{23}w^2 + A_{24}wb + A_{25}b^2  + A_{26}w^3 \\
			&\quad+ A_{27}w^2b + A_{28}wb^2 + A_{29}b^3,
		\end{aligned}
	\end{cases}
\end{equation}
where
\begin{center}
	$\begin{aligned}
		A_{11}&=-a-\delta b^{*}(\varepsilon_1)=-a-\delta b^{*}+\varepsilon_1(-\delta \tilde{b}),\\
		A_{12}&=-\delta w^{*}(\varepsilon_1)=-\delta w^{*}+\varepsilon_1(-\delta \tilde{w}),\\
		A_{14}&=-\delta,\\
		A_{21}&=\frac{\rho b^{*2}(\varepsilon_1)}{w^{*2}(\varepsilon_1)}+\frac{\mu \theta_2 b^{*}(\varepsilon_1)}{\left[1+\theta_1 b^{*}(\varepsilon_1)+\theta_2 w^{*}(\varepsilon_1)\right]^2}\\
		&=\frac{\rho b^{*2}}{w^{*2}}+\frac{\mu \theta_2 b^{*}}{(1+\theta_1 b^{*}+\theta_2 w^{*})^2}+\varepsilon_1 (2 \tilde{w} A_{23} |_{\varepsilon_1=0}+ \tilde{b} A_{24}|_{\varepsilon_1=0}),\\
		A_{22}&=\rho (1-\frac{2 b^{*}(\varepsilon_1)}{w^{*}(\varepsilon_1)})-\frac{\mu}{1+\theta_1 b^{*}(\varepsilon_1)+\theta_2 w^{*}(\varepsilon_1)}+\frac{\mu \theta_1 b^{*}(\varepsilon_1)}{\left[1+\theta_1 b^{*}(\varepsilon_1)+\theta_2 w^{*}(\varepsilon_1)\right]^2}\\
		&=\frac{\mu \theta_1 b^{*}}{(1+\theta_1 b^{*} + \theta_2 w^{*})^2} - \frac{\rho b^{*}}{w^{*}}+\varepsilon_1 ( \tilde{w} A_{24} |_{\varepsilon_1=0}+2 \tilde{b} A_{25}|_{\varepsilon_1=0}),\\
		A_{23}&=-\frac{\rho b^{*2}(\varepsilon_1)}{w^{*3}(\varepsilon_1)}-\frac{\mu \theta_2^2 b^{*}(\varepsilon_1)}{\left[1+\theta_1 b^{*}(\varepsilon_1)+\theta_2 w^{*}(\varepsilon_1)\right]^3},
	\end{aligned}$
\end{center}
\begin{center}
	$\begin{aligned}
		A_{24}&=\frac{2 \rho b^{*}(\varepsilon_1)}{w^{*2}(\varepsilon_1)}+\frac{\mu \theta_2}{\left[1+\theta_1 b^{*}(\varepsilon_1)+\theta_2 w^{*}(\varepsilon_1)\right]^2}-\frac{2 \mu \theta_1 \theta_2 b^{*}(\varepsilon_1)}{\left[1+\theta_1 b^{*}(\varepsilon_1)+\theta_2 w^{*}(\varepsilon_1)\right]^3},\\
		A_{25}&=-\frac{ \rho }{w^{*}(\varepsilon_1)}+\frac{\mu \theta_1}{\left[1+\theta_1 b^{*}(\varepsilon_1)+\theta_2 w^{*}(\varepsilon_1)\right]^2}-\frac{ \mu \theta_1^2  b^{*}(\varepsilon_1)}{\left[1+\theta_1 b^{*}(\varepsilon_1)+\theta_2 w^{*}(\varepsilon_1)\right]^3},\\
		A_{26}&=\frac{ \rho b^{*2}(\varepsilon_1)}{w^{*4}(\varepsilon_1)}+\frac{ \mu  \theta_2^3 b^{*}(\varepsilon_1)}{\left[1+\theta_1 b^{*}(\varepsilon_1)+\theta_2 w^{*}(\varepsilon_1)\right]^4},\\
		A_{27}&=-\frac{2 \rho b^{*}(\varepsilon_1)}{w^{*3}(\varepsilon_1)}-\frac{\mu \theta_2^2}{\left[1+\theta_1 b^{*}(\varepsilon_1)+\theta_2 w^{*}(\varepsilon_1)\right]^3}+\frac{3 \mu \theta_1 \theta_2^2 b^{*}(\varepsilon_1)}{\left[1+\theta_1 b^{*}(\varepsilon_1)+\theta_2 w^{*}(\varepsilon_1)\right]^4},\\
		A_{28}&=\frac{\rho}{w^{*2}(\varepsilon_1)}-\frac{2 \mu \theta_1 \theta_2}{\left[1+\theta_1 b^{*}(\varepsilon_1)+\theta_2 w^{*}(\varepsilon_1)\right]^3}+\frac{3 \mu \theta_1^2 \theta_2 b^{*}(\varepsilon_1)}{\left[1+\theta_1 b^{*}(\varepsilon_1)+\theta_2 w^{*}(\varepsilon_1)\right]^4},\\
		A_{29}&=-\frac{ \mu \theta_1^2 (1+\theta_2 w^{*}(\varepsilon_1))}{\left[1+\theta_1 b^{*}(\varepsilon_1)+\theta_2 w^{*}(\varepsilon_1)\right]^4},\\
		A_{13}&=A_{15}=A_{16}=A_{17}=A_{18}=A_{19}=0.
	\end{aligned}$
\end{center}

It follows from Theorem \ref{thm3.3} that for system (\ref{eq1.1}), when $\varepsilon_1 = \varepsilon_2 = 0$, $\Delta_0(\lambda) = 0$ has a pair of purely
imaginary roots $\pm i \omega_0$ with $\omega_0=\sqrt{J_0}$; $\Delta_{k_{*}}(\lambda) = 0$
has a zero root and a negative real root $\lambda=T_{k_{*}}$;
and if $k \neq 0,k_{*}$, all roots of $\Delta_k(\lambda) = 0$ have negative real parts. Therefore, for \eqref{eq4.1} a $(k_*,0)-$mode TH bifurcation occurs near the origin  at
$(\varepsilon_1, \varepsilon_2)=(0,0)$, that is, $(0,0)$  is the TH
singularity in the perturbation plane of $\varepsilon_1$ and $\varepsilon_2$.

Similarly to \cite{an2018turing}, for system (\ref{eq4.2}) we define $U=(w,b)^T\in X_{\mathbb{C}}$,
$$
D(\varepsilon)=\begin{pmatrix}
	d_1^*+\varepsilon_2 & 0 \\
	0 & d_2
\end{pmatrix}, \quad
L(\varepsilon)=\begin{pmatrix}
	A_{11} & A_{12} \\
	A_{21} & A_{22}
\end{pmatrix}
$$
and
$$
G(U,\varepsilon_1,\varepsilon_2)=\begin{pmatrix}
A_{13}w^2 + A_{14}wb + A_{15}b^2 + A_{16}w^3+A_{17}w^2b +  A_{18}wb^2 + A_{19}b^3\\
	A_{23}w^2 + A_{24}wb + A_{25}b^2  + A_{26}w^3 + A_{27}w^2b + A_{28}wb^2 + A_{29}b^3
\end{pmatrix}.
$$
Then system (\ref{eq4.2}) can be written as an abstract
equation in phase space $X_{\mathbb{C}}$:
\begin{equation}\label{cq}
\frac{d U}{dt}=D(\varepsilon)\Delta U+L(\varepsilon)U+G(U,\varepsilon_1,\varepsilon_2).
\end{equation}	
For simplicity, we rewrite $D(\varepsilon)$ and $L(\varepsilon)$ as
$$
D(\varepsilon)=D_0 + D_1^{(1,0)}\varepsilon_1+D_1^{(0,1)}\varepsilon_2,
$$
$$
L(\varepsilon)=L_0 + L_1^{(1,0)}\varepsilon_1+L_1^{(0,1)}\varepsilon_2.
$$
Thus, we can obtain $D_0,  D_1^{(1,0)}, D_1^{(0,1)}, L_0,  L_1^{(1,0)}$ and $L_1^{(0,1)}$ as follows:
$$
D_0=\begin{pmatrix}
	d_1^* & 0 \\
	0 & d_2
\end{pmatrix}, \quad D_1^{(1,0)}=\begin{pmatrix}
	0 & 0 \\
	0 & 0
\end{pmatrix}, \quad D_1^{(0,1)}=\begin{pmatrix}
	1 & 0 \\
	0 & 0
\end{pmatrix},
$$
$$
L_0=\begin{pmatrix}
	l_{0,11} & l_{0,12} \\
	l_{0,21} & l_{0,22}
\end{pmatrix}, \quad L_1^{(1,0)}=\begin{pmatrix}
	l_{1,11} & l_{1,12} \\
	l_{1,21} & l_{1,22}
\end{pmatrix}, \quad L_1^{(0,1)}=\begin{pmatrix}
	0 & 0 \\
	0 & 0
\end{pmatrix},
$$
with
\begin{gather*}
	\begin{aligned}
		l_{0,11}&=-a-\delta b^{*}, \\
		l_{0,21}&=\frac{\rho b^{*2}}{w^{*2}}+\frac{\mu \theta_2 b^{*}}{\left(1+\theta_1 b^{*}+\theta_2 w^{*}\right)^2}, \\
		l_{1,11}&=-\delta \tilde{b}, \\
		l_{1,21}&=2 \tilde{w} A_{23} |_{\varepsilon_1=0}+ \tilde{b} A_{24}|_{\varepsilon_1=0},
	\end{aligned}
	\qquad
	\begin{aligned}
		l_{0,12}&=-\delta w^{*},\\
		l_{0,22}&=\frac{\mu \theta_1 b^{*}}{(1+\theta_1 b^{*} + \theta_2 w^{*})^2} - \frac{\rho b^{*}}{w^{*}},\\
		l_{1,12}&=-\delta \tilde{w},\\
		l_{1,22}&=\tilde{w} A_{24} |_{\varepsilon_1=0}+2 \tilde{b} A_{25}|_{\varepsilon_1=0}.
	\end{aligned}
\end{gather*}

Let
$$
\mathcal{M}_k(\lambda)=\begin{pmatrix}
	\lambda + d_1^*\frac{k^2}{l^2}-l_{0,11}& -l_{0,12}\\
	-l_{0,21}&\lambda+d_2 \frac{k^2}{l^2}-l_{0,22}
\end{pmatrix}.
$$
Then, by a straightforward calculation, we can
obtain that $p_0 \in \mathbb{C}^2$ and $p_{k_*}\in \mathbb{R}^2$ are the eigenvectors associated with the eigenvalues $i\omega_0$ and 0,
respectively, and $q_0 \in \mathbb{C}^2$ and $q_{k_*}\in \mathbb{R}^2$ are the
corresponding adjoint eigenvectors, where
$$
p_0=\begin{pmatrix}
	p_{01}\\
	p_{02}
\end{pmatrix}=\begin{pmatrix}
	1\\
	\frac{i\omega_0 - l_{0,11}}{l_{0,12}}
\end{pmatrix},\quad
q_0=\begin{pmatrix}
	q_{01}\\
	q_{02}
\end{pmatrix}=D_1\begin{pmatrix}
	1\\
	\frac{i\omega_0 - l_{0,11}}{l_{0,21}}
\end{pmatrix},
$$
$$
p_{k_*}=\begin{pmatrix}
	p_{k_*1}\\
	p_{k_*2}
\end{pmatrix}=\begin{pmatrix}
	1\\
	\frac{d_1^*\frac{k_*^2}{l^2}-l_{0,11}}{l_{0,12}}
\end{pmatrix},\quad
q_{k_*}=\begin{pmatrix}
	q_{k_*1}\\
	q_{k_*2}
\end{pmatrix}=D_2\begin{pmatrix}
	1\\
	\frac{d_1^*\frac{k_*^2}{l^2}-l_{0,11}}{l_{0,21}}
\end{pmatrix},
$$
with
$$
D_1=\frac{l_{0,21}l_{0,12}}{l_{0,21}l_{0,12}+(i\omega_0 - l_{0,11})^2},\quad
D_2=\frac{l_{0,21}l_{0,12}}{l_{0,21}l_{0,12}+(d_1^*\frac{k_*^2}{l^2}-l_{0,11})^2},
$$
such that
$$
\langle\Psi_1,\Phi_1\rangle = I_2, \quad \langle\Psi_2,\Phi_2\rangle = 1,
$$
where $I_2$ is an $2\times 2$ identity matrix and
$$
\Phi_1 = (p_0,\overline{p_0}), \quad \Phi_2=p_{k_*},
$$
$$
\Psi_1=\text{col}(q_0^T,\overline{q_0}^T), \quad \Psi_2=q_{k_*}^T.
$$

Following the techniques in \cite{song2016turing}, by a recursive transformation, we can obtain that the normal form of \eqref{cq} for TH bifurcation reads as
\begin{equation}\label{eq4.3}
	\dot{z}=Bz +
	\begin{pmatrix}
		B_{11}\varepsilon_1z_1 + B_{21}\varepsilon_2z_1\\
		\overline{B}_{11}\varepsilon_1z_1 + \overline{B}_{21}\varepsilon_2z_1\\
		B_{13}\varepsilon_1z_3 + B_{23}\varepsilon_2z_3
	\end{pmatrix}+
	\begin{pmatrix}
		B_{210}z_1^2z_2 + B_{102}z_1z_3^2\\
		\overline{B}_{210}z_1^2z_2 + \overline{B}_{102}z_1z_3^2\\
		B_{111}z_1z_2z_3 + B_{003}z_3^3
	\end{pmatrix}+O(|z||\varepsilon|^2),
\end{equation}
where
$$
B_{11}=q_0^T L_1^{(1,0)}p_0,\ B_{21}=q_0^T L_1^{(0,1)}p_0,
$$
$$
B_{13}=q_{k_*}^T\left(-\frac{k_*^2}{l^2}D_1^{(1,0)}p_{k_*}+L_1^{(1,0)}p_{k_*}\right),\ B_{23}=q_{k_*}^T\left(-\frac{k_*^2}{l^2}D_1^{(0,1)}p_{k_*}+L_1^{(0,1)}p_{k_*}\right),
$$
and
$$
B_{210}=C_{210}+\frac{3}{2}(D_{210}+E_{210}),\ B_{102}=C_{102}+\frac{3}{2}(D_{102}+E_{102}),
$$
$$
B_{111}=C_{111}+\frac{3}{2}(D_{111}+E_{111}),\ B_{003}=C_{003}+\frac{3}{2}(D_{003}+E_{003}).
$$
Here, the expressions of $C_{ijk}, D_{ijk}$ and $E_{ijk}$ are similar to those in \cite{liu2023turing} (see Appendix A).

Through the variable transformation $z_1=v_1-iv_2, z_2=v_1+iv_2$ and $z_3=v_3$, the normal form (\ref{eq4.3}) can be expressed in real coordinates $\omega$. Subsequently, by using $v_1=\rho cos \Theta$,$v_2=\rho sin \Theta$ and $v_3=s$, it can be rewritten in cylindrical coordinates. Truncating at third-order terms and removing the azimuthal term, ultimately, we obtain
\begin{equation}\label{eq4.4}
		\begin{cases}
		\dot{\rho}=\nu_1(\varepsilon)\rho+\kappa_{11}\rho^3+\kappa_{12}\rho s^2,\\
		\dot{s}=\nu_2(\varepsilon)s+\kappa_{21}\rho^2s+\kappa_{22}s^3,
	\end{cases}
\end{equation}
where
$$\nu_1(\varepsilon)=\text{Re}(B_{11})\varepsilon_1+\text{Re}(B_{21})\varepsilon_2,\quad \nu_2(\varepsilon)=B_{13}\varepsilon_1 + B_{23}\varepsilon_2,$$
$$\kappa_{11}=\text{sign}(\text{Re}(B_{210})),\quad
	\kappa_{12}=\frac{\text{Re}(B_{102})}{|B_{003}|},\quad
	\kappa_{21}=\frac{B_{111}}{|\text{Re}(B_{210})|},\quad
	\kappa_{22}=\text{sign}(B_{003}).$$
\section{Numerical simulations}\label{5}
In this section, we shall carry out some numerical simulations to supplement our previous theoretical analysis.

\subsection{The rich dynamics of the ODE system}
 Firstly, we  fix the parameters as
$
a=0.1,\ \delta=0.12,\ \rho=1,\ \mu=10,\ \theta_1=2.5$ and vary $\theta_2, R$
 to investigate how the parameters $\theta_2, R$ affect the dynamics of  system (\ref{eq2.1}).

\begin{figure}[!b]
	\centering
	% 两张图片并列
	\includegraphics[width=0.45\textwidth]{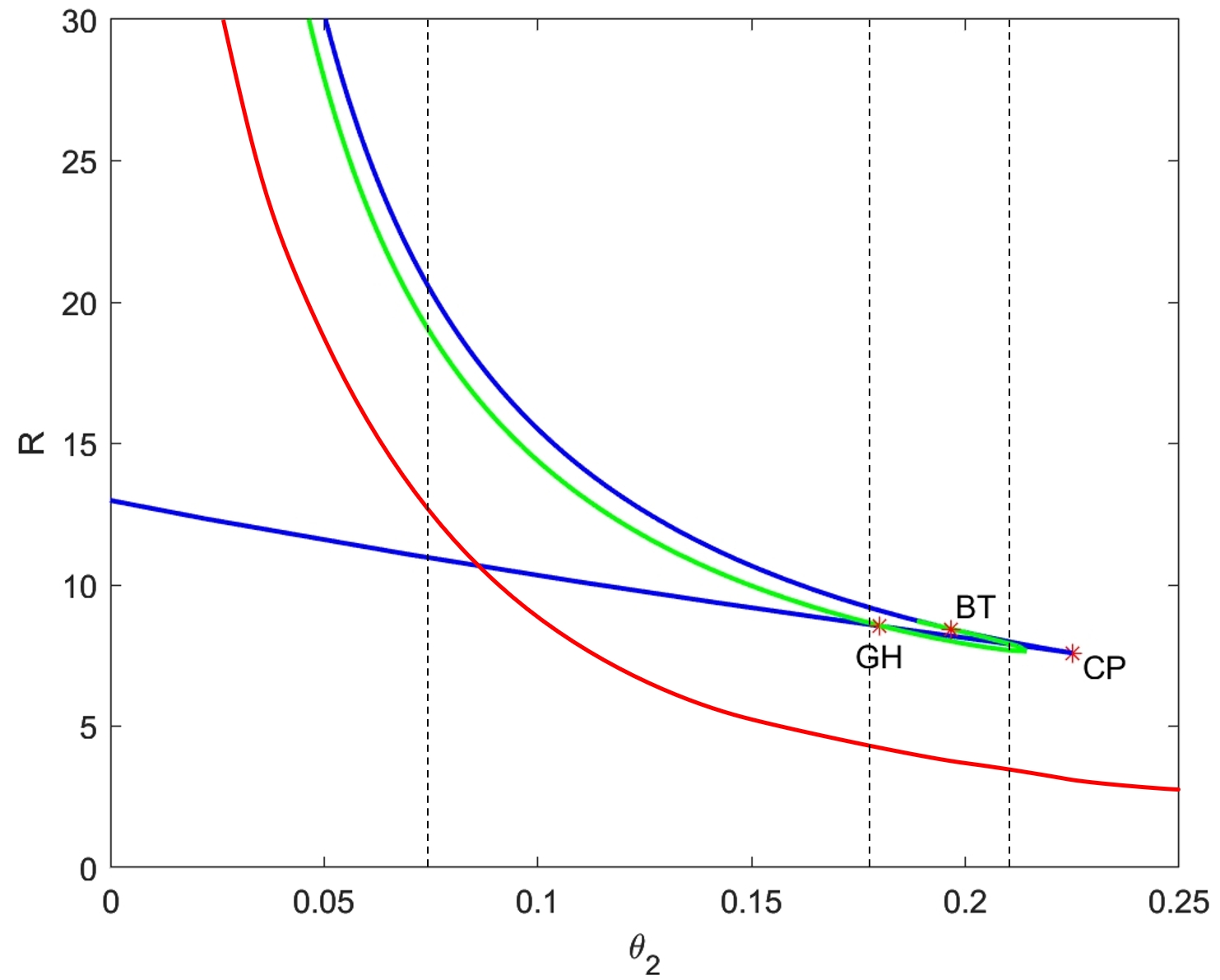}
	\caption{The green curve represents the Hopf bifurcation curve, the blue curve represents the saddle-node bifurcation curve and the red curve is  the transcritical
bifurcation curve $R=\frac{a(\mu-\rho)}{\rho \theta_2}$. The dashed lines from left to right correspond to $\theta_2 = 0.075$, $\theta_2 = 0.18$, and $\theta_2 = 0.21$, respectively. Other parameters: $
a=0.1,\ \delta=0.12,\ \rho=1,\ \mu=10,\ \theta_1=2.5$. }
	\label{fig:5.1.31}
\end{figure}
  For this case,  system (\ref{eq2.1}) may have up to three positive equilibria, $ E_{31}^* $, $ E_{32}^* $, and $ E_{33}^* $. Fig. \ref{fig:5.1.31} illustrates  the two-parameter bifurcation diagrams on the $\theta_2-R$ plane. In Fig. \ref{fig:5.1.31}, the green curve represents the Hopf bifurcation curve, the blue curve represents the saddle-node bifurcation curve and the red curve is  the transcritical
bifurcation curve $R=\frac{a(\mu-\rho)}{\rho \theta_2}$.  The intersection point of the
saddle-node curve  and the Hopf curve is the Bogdanov-Takens bifurcation
point marked by \lq\lq BT\rq\rq, and the  point marked by \lq\lq GH\rq\rq is the generalized Hopf bifurcation point at which
 the Hopf bifurcation changes from subcritical to
supercritical. We observe that: (1) The system may undergo saddle-node bifurcations at $ R_{LP}^1 $ and $ R_{LP}^2 $ ($ R_{LP}^1 < R_{LP}^2 $), causing $ E_{32}^* $ to always be a saddle and therefore unstable, as also established by Theorem \ref{thm2.5}; (2)
The critical value $ R^* $ (the forward transcritical bifurcation point) can be either below or above $ R_{LP}^1 $, resulting in distinct dynamical behaviors; (3) There may exist one or two Hopf bifurcation points ($R_1$ and $R_2$), which may be subcritical or
supercritical resulting in  either stable or unstable limit cycles.

\begin{figure}[!b]
	\centering
	% 两张图片并列
	\subfigure[Bifurcation diagram]{\includegraphics[width=0.335\textwidth,height=4.5cm]{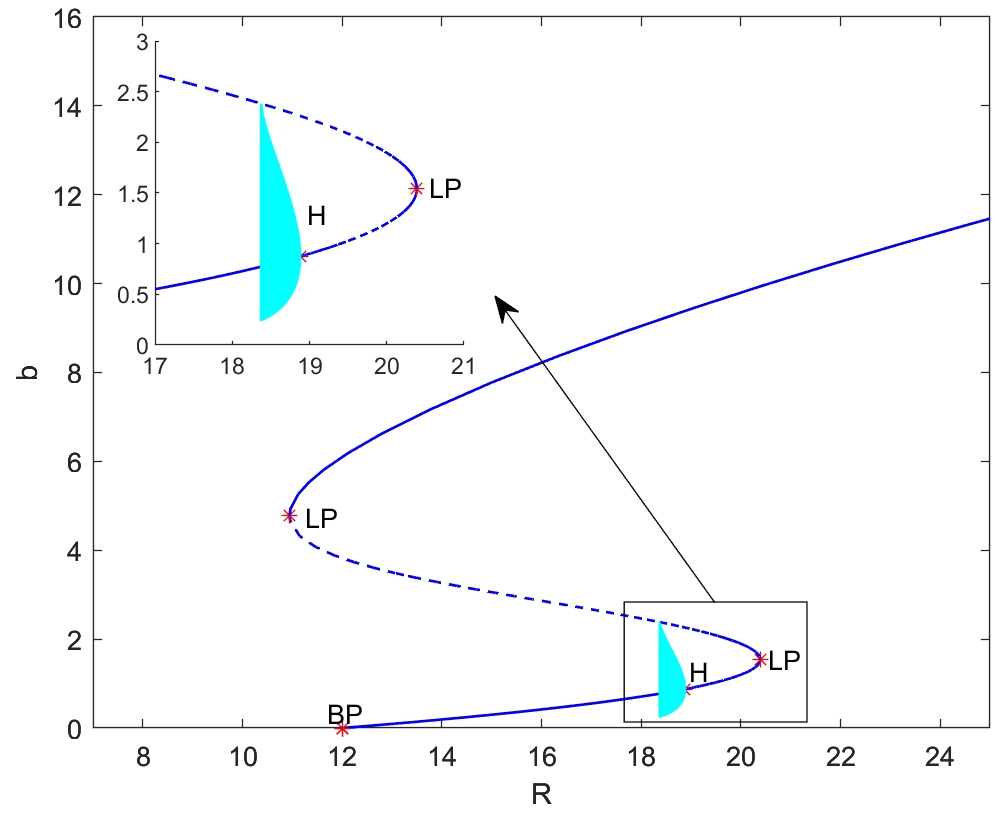}} \hspace{0.1pt}	
	\subfigure[$R_{LP}^1<R<R^*$]{\includegraphics[width=0.31\textwidth]{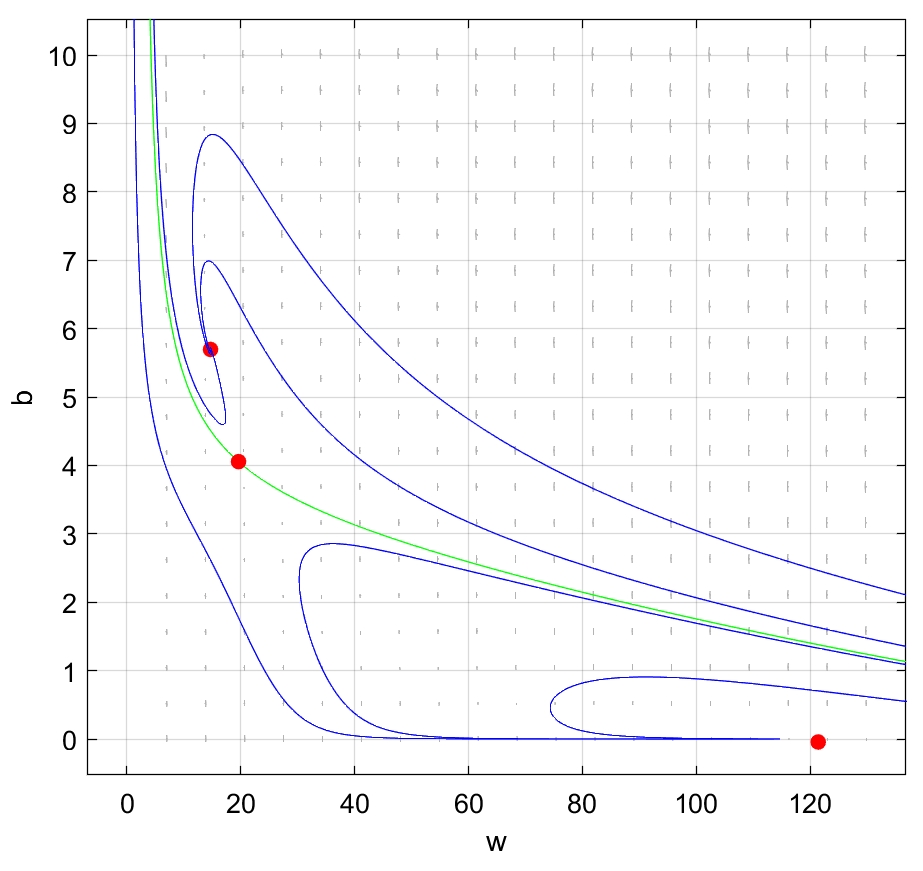}} \hspace{0.5pt}
	\subfigure[$R^*<R<R_{HL}$]{\includegraphics[width=0.314\textwidth]{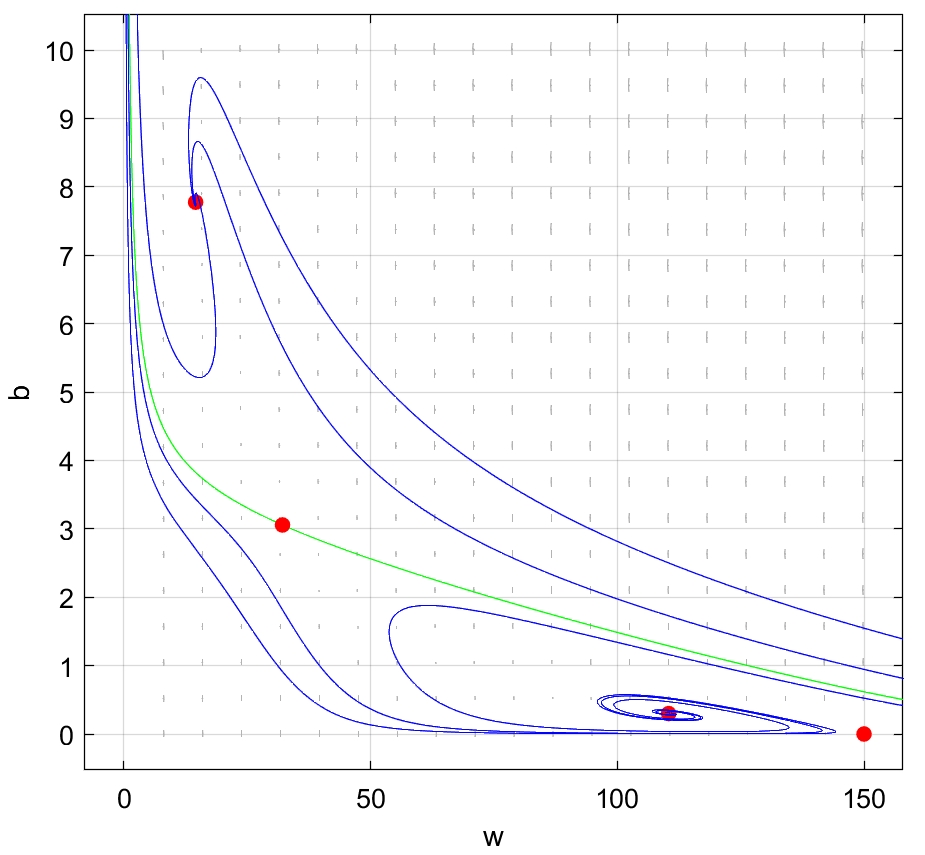}} \hspace{0.5pt}
	
	\subfigure[$R=R_{HL}$]{\includegraphics[width=0.32\textwidth]{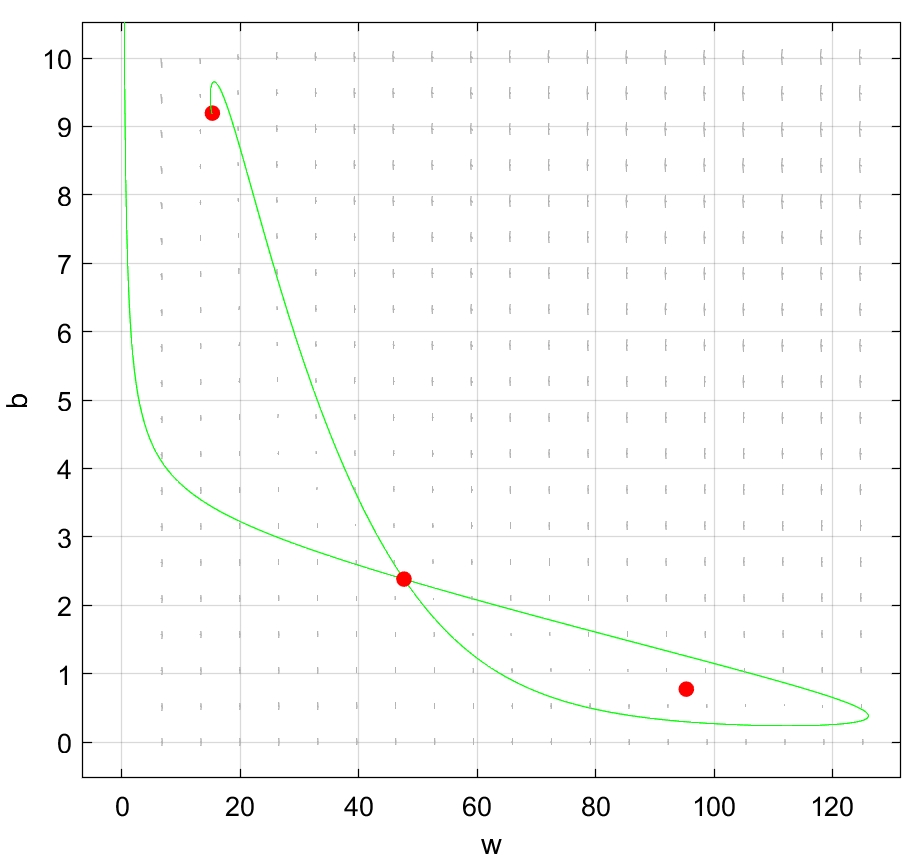}}\hspace{0.5pt}
	\subfigure[$R_{HL}<R<R_2$]{\includegraphics[width=0.325\textwidth]{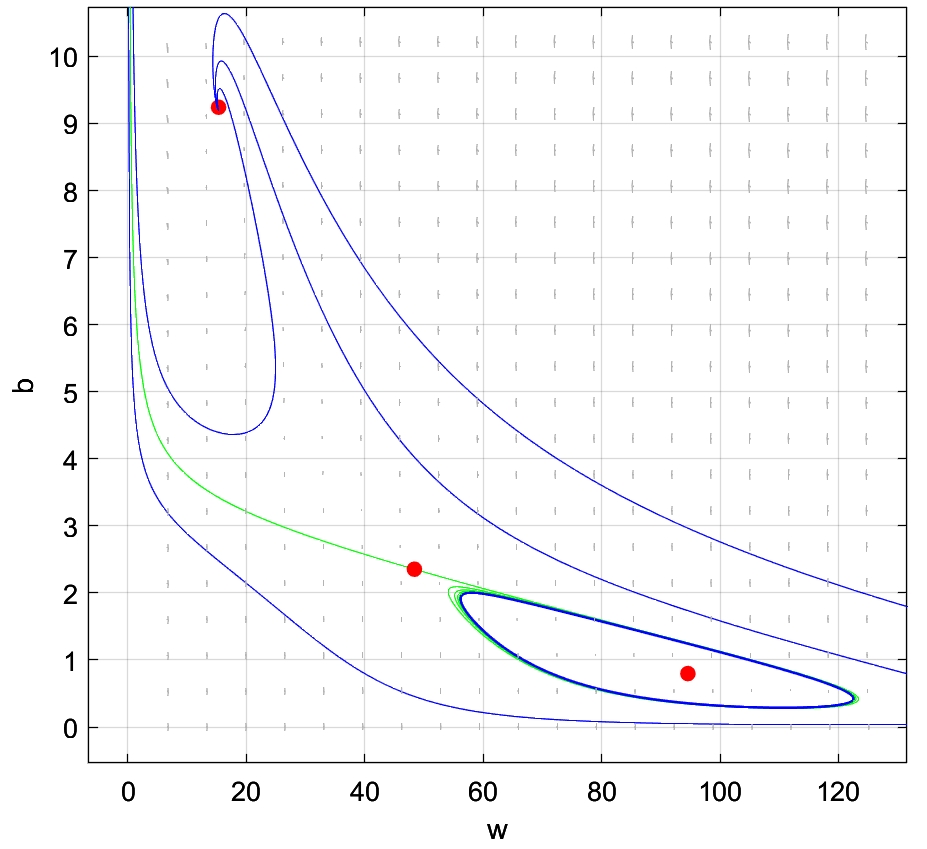}}\hspace{0.5pt}
	\subfigure[$R_2<R<R_{LP}^2$]{\includegraphics[width=0.32\textwidth]{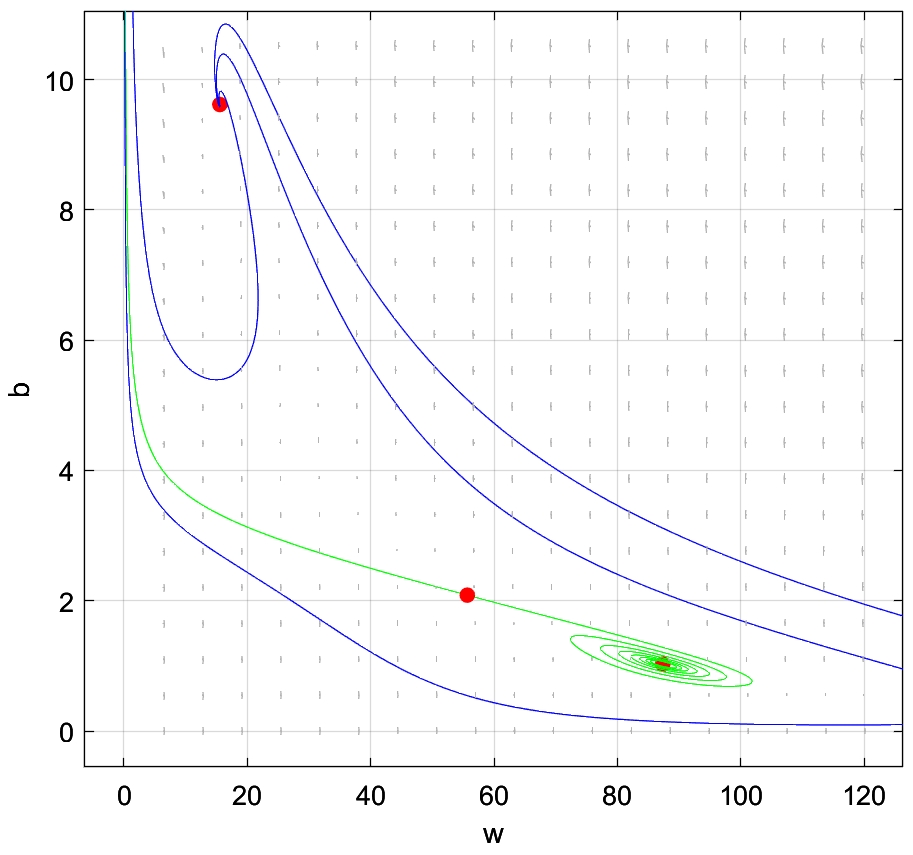}}
	\caption{Bifurcation diagram and the corresponding evolution of phase portraits for  $\theta_2 = 0.075$. In (a), the blue curve represents the biomass $b$ as a function of the rainfall rate $R$, while the cyan curve denotes the unstable limit cycle. In (b), $R = 11.5$; (c) $R = 15$; (d) $ R = 18.375122$; (e) $R = 18.5$; (f) $ R = 19.5 $. Other parameters: $
a=0.1,\ \delta=0.12,\ \rho=1,\ \mu=10,\ \theta_1=2.5$.}
	\label{fig:5.1.6}
\end{figure}

For $\theta_2 = 0.075$, the bifurcation diagram and the evolution of phase portraits as $ R $ increases are shown in Fig. \ref{fig:5.1.6}.
The bifurcation values are $ R^* = 12 $, $ R_2 = 18.877941 $, $ R_{LP}^1 = 10.943899 $, $ R_{LP}^2 = 20.391003 $, and $ R_{HL} = 18.375122 $, with a subcritical Hopf bifurcation occurring at $R = R_2$.
Here, $ R^* $ is greater than $ R_{LP}^1 $, leading to a scenario where both the bare-soil equilibrium $ E_0 $ and a positive equilibrium can be stable.
Specifically, when $ R_{LP}^1 < R < R^* $, there exist two positive equilibria $ E_{32}^* $ and $ E_{33}^* $, with $ E_{32}^* $ always being an unstable saddle. The system exhibits bistability between the bare-soil equilibrium $ E_0 $ and the positive equilibrium $ E_{33}^* $ with the stable manifold of the saddle acting as the boundary between their attraction basins (see Fig. \ref{fig:5.1.6}(b)). As $ R $ increases to $ R^* < R < R_{HL} $, two stable positive equilibria $ E_{31}^*, E_{33}^* $ and one unstable equilibrium $ E_{32}^* $ emerge, and the  basins of attraction for the two locally stable equilibria are separated by the stable manifold of the saddle equilibrium $ E_{32}^* $ (see Fig. \ref{fig:5.1.6}(c)). At $ R = R_{HL} $, a homoclinic orbit appears and it is the boundary between the attraction basins of $ E_{31}^* $ and $ E_{33}^* $ (see Fig. \ref{fig:5.1.6}(d)). As $ R $ increases to $ R_{HL} < R < R_2 $, an unstable limit cycle emerges from the subcritical Hopf bifurcation. In this case, the unstable cycle serves as the separatrix between the attraction basins of $ E_{31}^* $ and $ E_{33}^* $ (see Fig. \ref{fig:5.1.6}(e)). When $ R_2 < R < R_{LP}^2 $, the system has three positive equilibria, but only $ E_{33}^* $ is stable (see Fig. \ref{fig:5.1.6}(f)).
When $ R > R_{LP}^2 $, the system has a unique stable equilibrium $ E_{33}^* $.
In each of Figs. \ref{fig:5.1.6}(c), \ref{fig:5.1.6}(d), and \ref{fig:5.1.6}(e), system has a bistability of $ E_{31}^* $ and $ E_{33}^* $, but the separatrix is different.

\begin{figure}[!h]
	\centering
	% 两张图片并列
	\subfigure[Bifurcation diagram]{\includegraphics[width=0.335\textwidth,height=4.4cm]{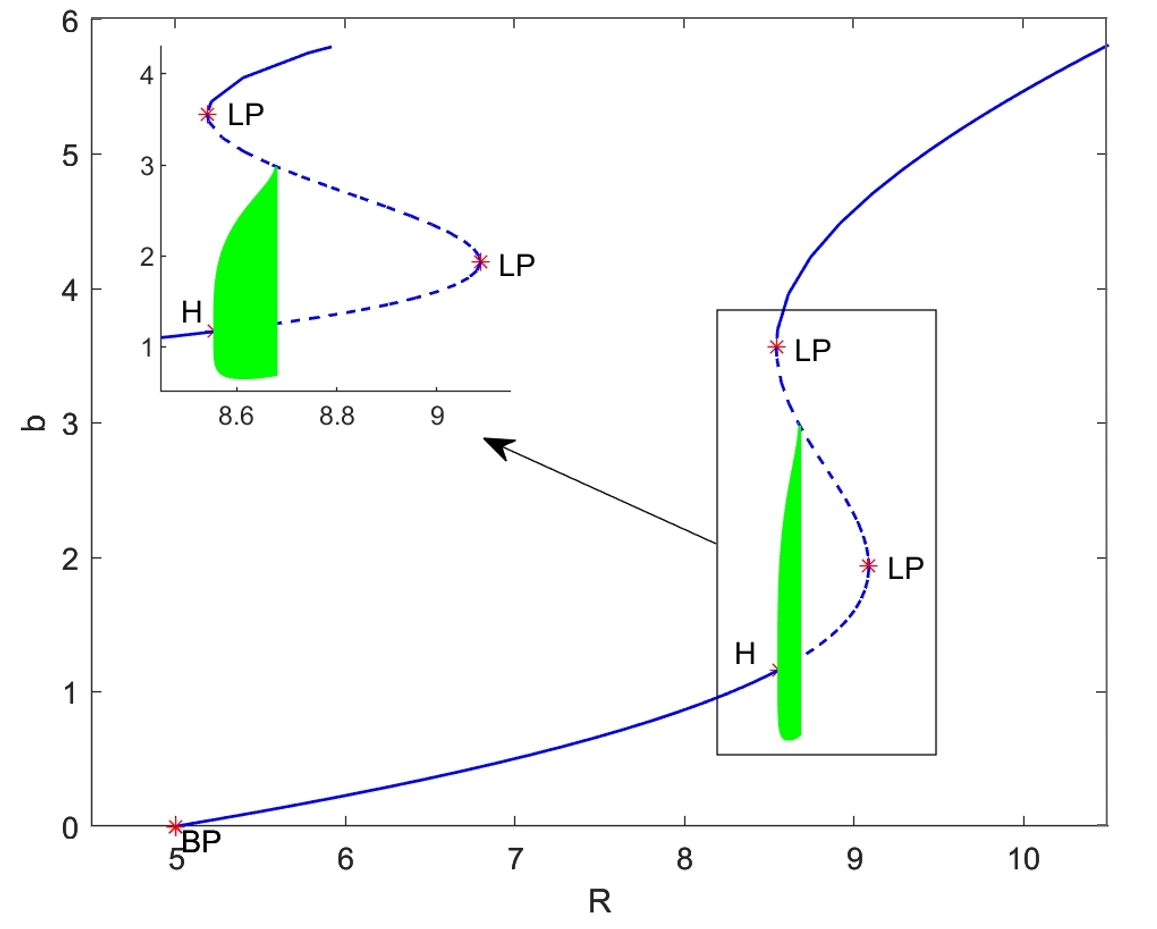}} \hspace{0.1pt}	
	\subfigure[$R^*<R<R_{LP}^1$]{\includegraphics[width=0.31\textwidth]{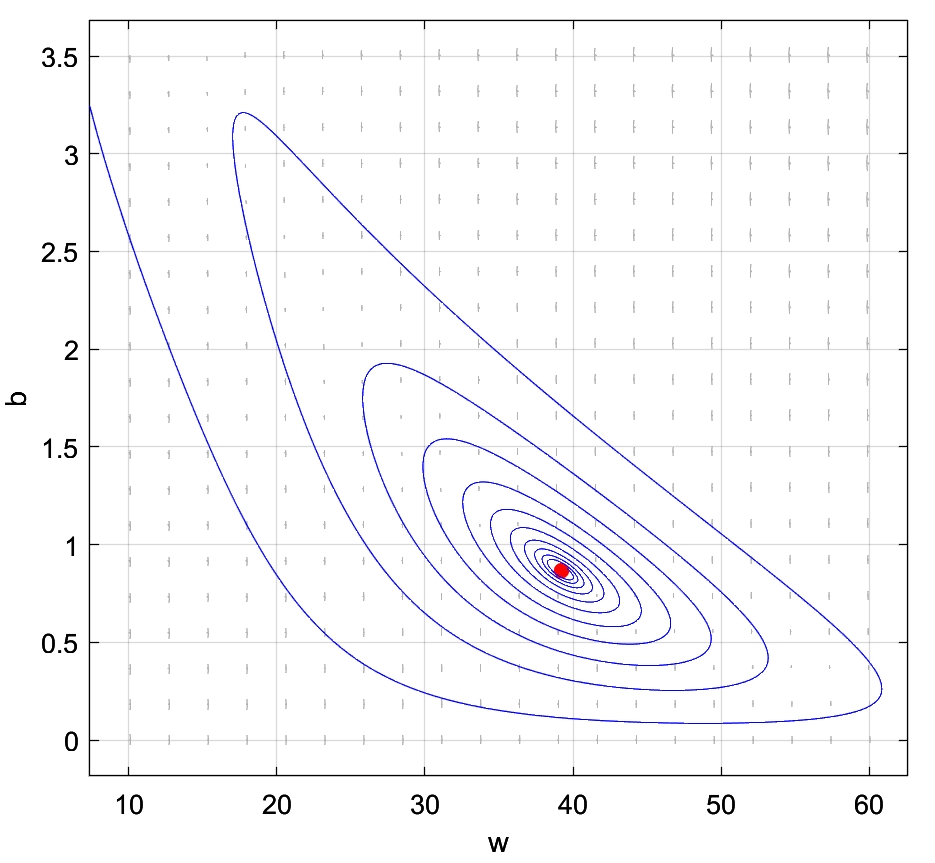}} \hspace{0.5pt}
	\subfigure[$R_{LP}^1<R<R_1$]{\includegraphics[width=0.308\textwidth]{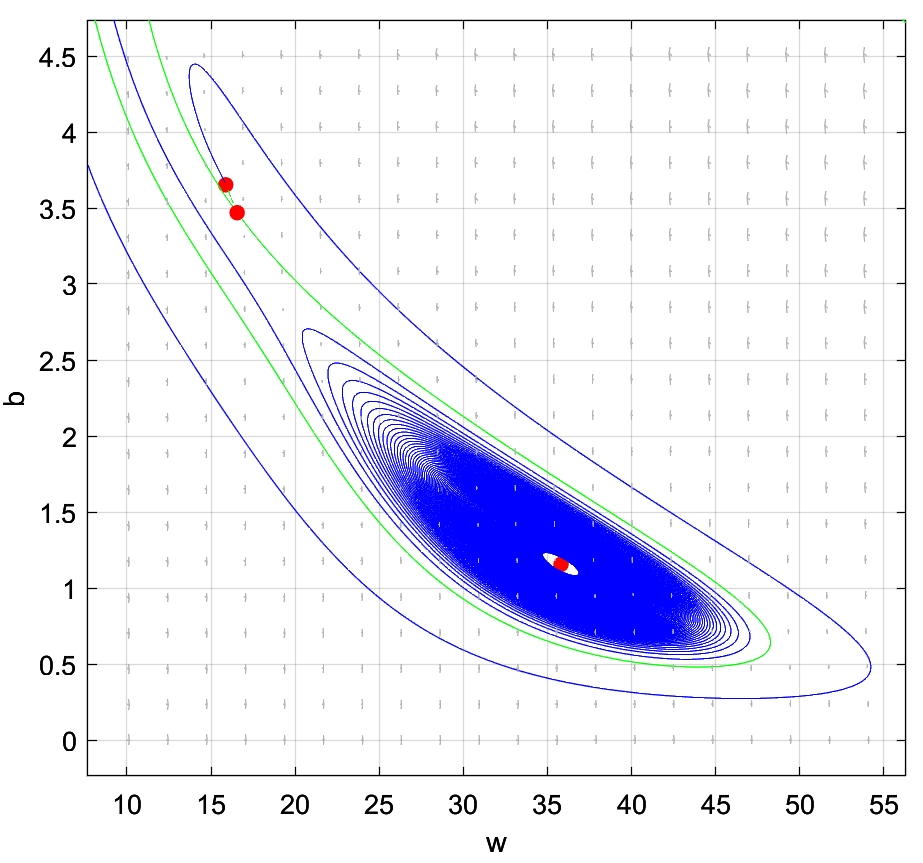}} \hspace{0.5pt}
	
	\subfigure[$R_1<R<R_{HL}$]{\includegraphics[width=0.32\textwidth]{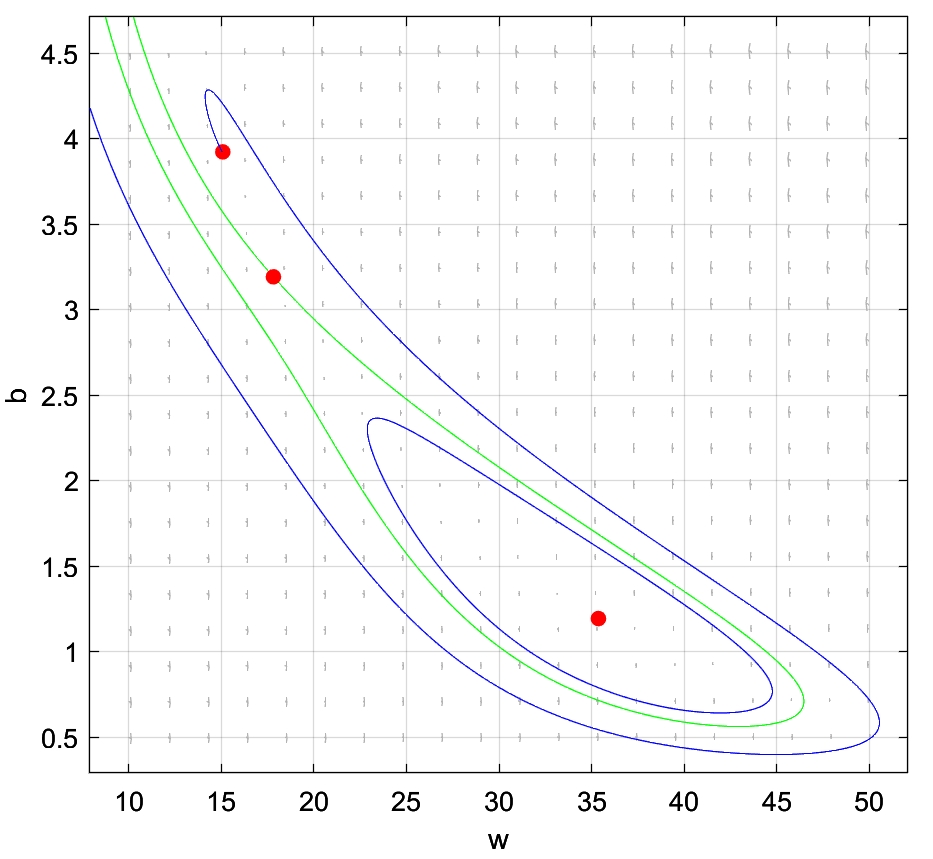}}\hspace{0.5pt}
	\subfigure[$R=R_{HL}$]{\includegraphics[width=0.315\textwidth]{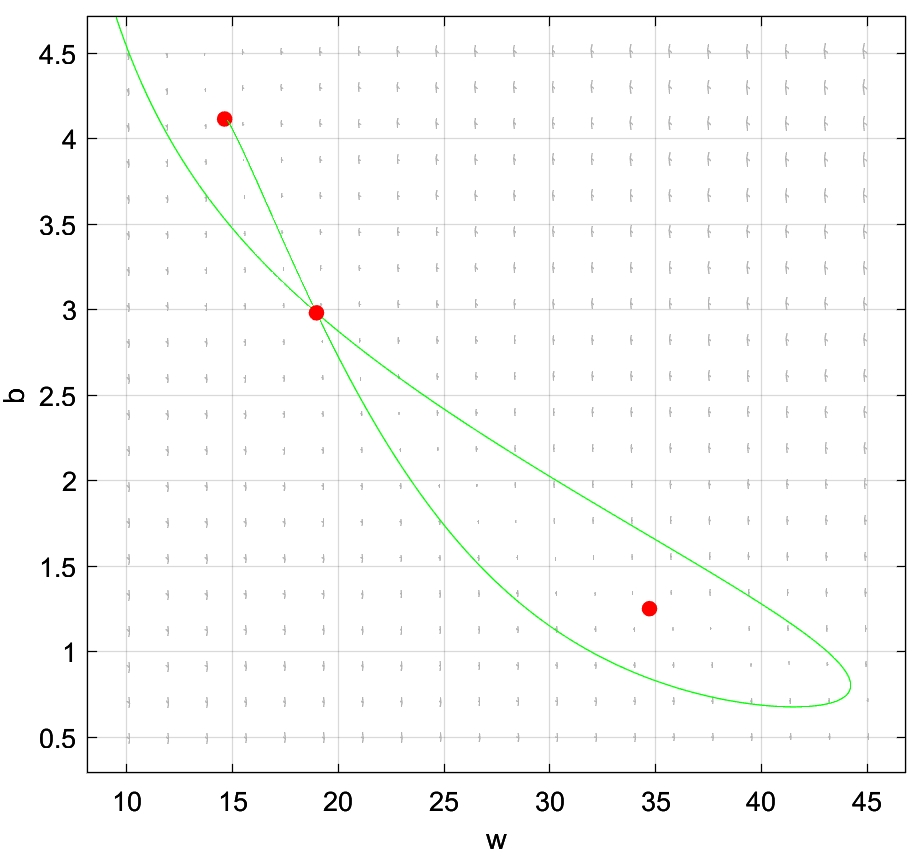}}\hspace{0.5pt}
	\subfigure[$R_{HL}<R<R_{LP}^2$]{\includegraphics[width=0.32\textwidth]{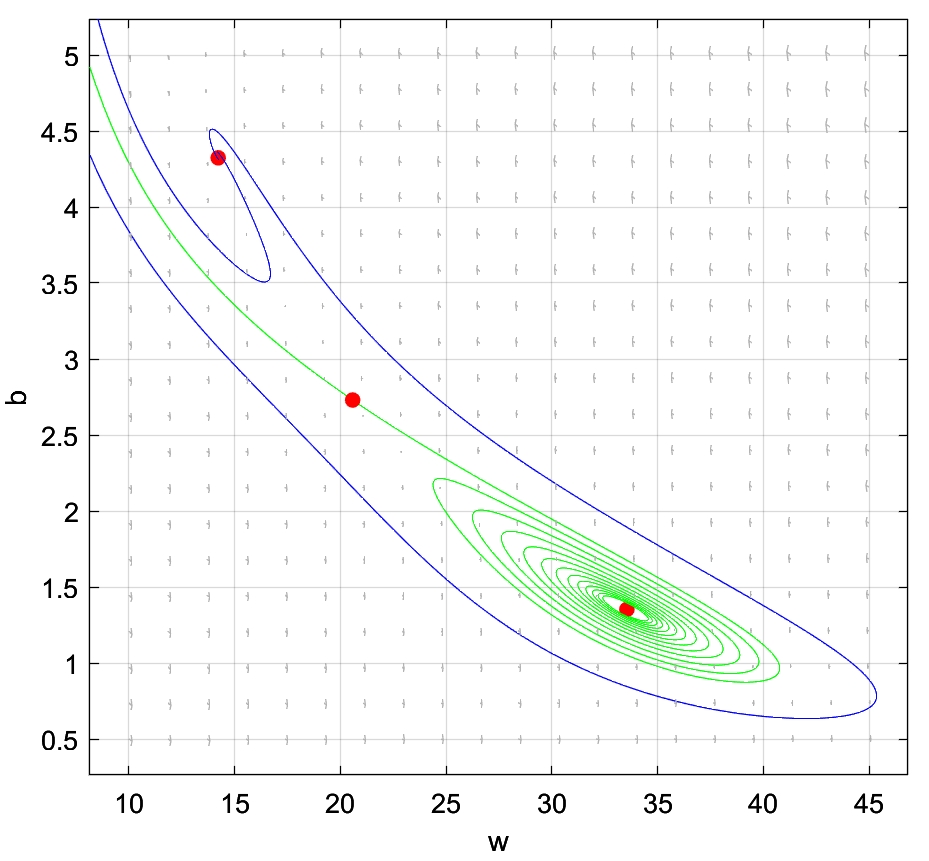}}
	\caption{Bifurcation diagram and the corresponding evolution of phase portraits for  $\theta_2 = 0.18$. In (a), the blue curve represents the biomass $b$ as a function of the rainfall rate $R$, while the green curve denotes the stable limit cycle. In (b), $R = 8$; (c) $R = 8.545$; (d) $ R = 8.6$; (e) $R = 8.6789505$; (f) $ R = 8.8 $. Other parameters: $
a=0.1,\ \delta=0.12,\ \rho=1,\ \mu=10,\ \theta_1=2.5$.}
	\label{fig:5.1.5}
\end{figure}

For $\theta_2 = 0.18$, the bifurcation diagram and the evolution of phase portraits as $ R $ increases are shown in Fig. \ref{fig:5.1.5}. Now the bifurcation values are $R^* = 5$, $R_1 = 8.5563186$, $R_{LP}^1 = 8.5412358$, and $R_{LP}^2 = 9.0870371$, with a supercritical Hopf bifurcation occurring at $R = R_1$. Additionally, there exists a homoclinic bifurcation point at $R_{HL} = 8.6789505$.
When $0 < R < R^*$, the equilibrium $E_0$ is locally asymptotically stable.
When $R^* < R < R_{LP}^1$, the system (2.1) has a unique positive equilibrium $E_{31}^*$, which is stable, while $E_0$ becomes unstable (see Fig. \ref{fig:5.1.5} (b)).
As $ R $ increases to $R_{LP}^1 < R < R_1$, three positive equilibria appear: $E_{31}^*$, $E_{32}^*$, and $E_{33}^*$, with $E_{32}^*$ always being an unstable saddle. The system (\ref{eq2.1}) admits the bistability of the positive equilibria $ E_{31}^* $ and $ E_{33}^* $ with the stable manifolds (the green orbits) of the saddle $ E_{32}^* $ as the separatrix of attraction (see Fig. \ref{fig:5.1.5} (c)).
When $R_1 < R < R_{HL}$, $E_{31}^*$ becomes unstable, and a stable limit cycle emerges from the supercritical Hopf bifurcation. In this case, the system exhibits bistability between the positive equilibrium $ E_{33}^* $ and a stable limit cycle, with the boundary still determined by the stable manifold of the saddle(see Fig. \ref{fig:5.1.5} (d)).
When R increases to $R = R_{HL}$, the period of the limit cycle tends to infinity and a homoclinic orbit emerges in Fig. \ref{fig:5.1.5} (e).
When $R_{HL} < R < R_{LP}^2$, the homoclinic orbit is broken, and among the three positive equilibria, only $E_{33}^*$ remains stable (see Fig. \ref{fig:5.1.5} (f)).
When $R > R_{LP}^2$, The system has only a unique stable equilibrium $E_{33}^*$.

\begin{figure}[!h]
	\centering
	% 两张图片并列
	\subfigure[Bifurcation diagram]{\includegraphics[width=0.335\textwidth,height=4.4cm]{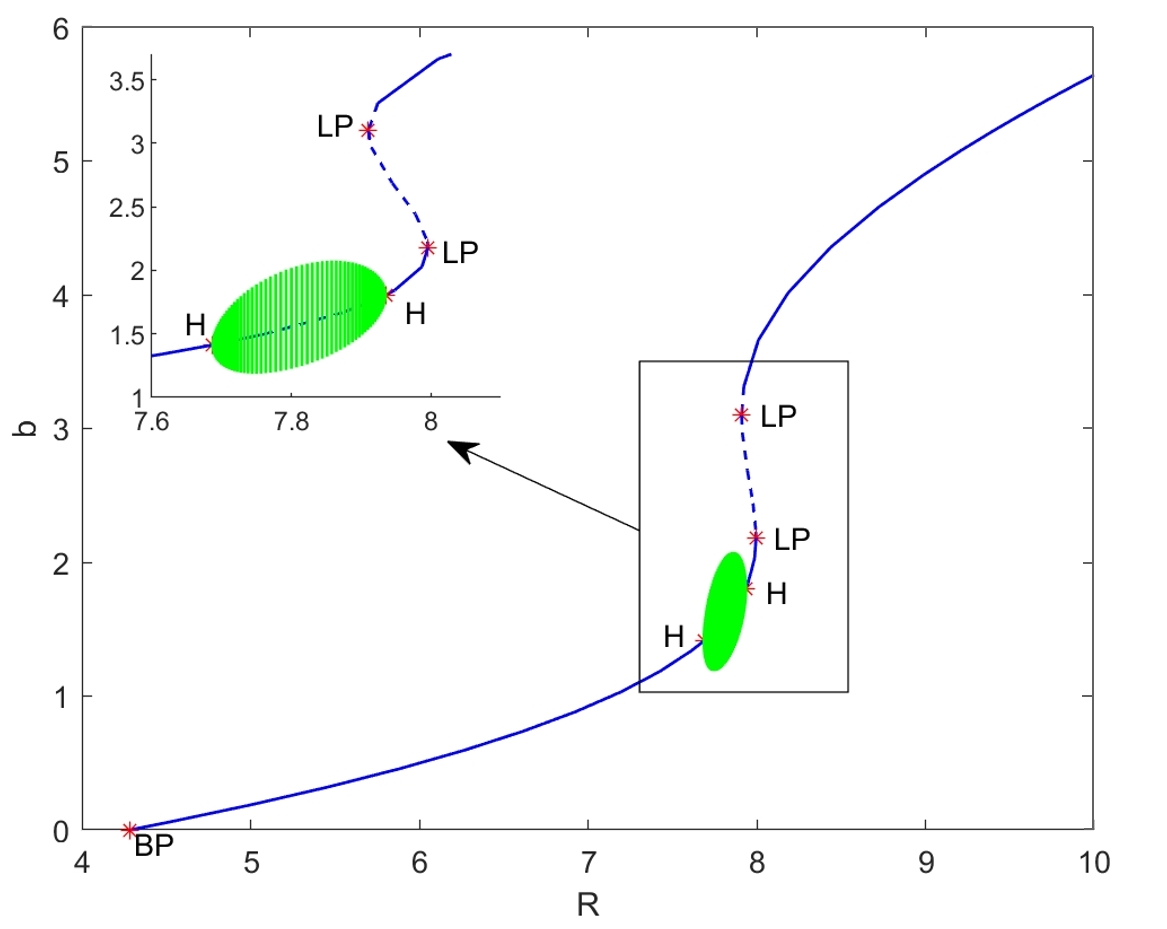}} \hspace{0.1pt}
	\subfigure[$R^*<R<R_1$]{\includegraphics[width=0.31\textwidth]{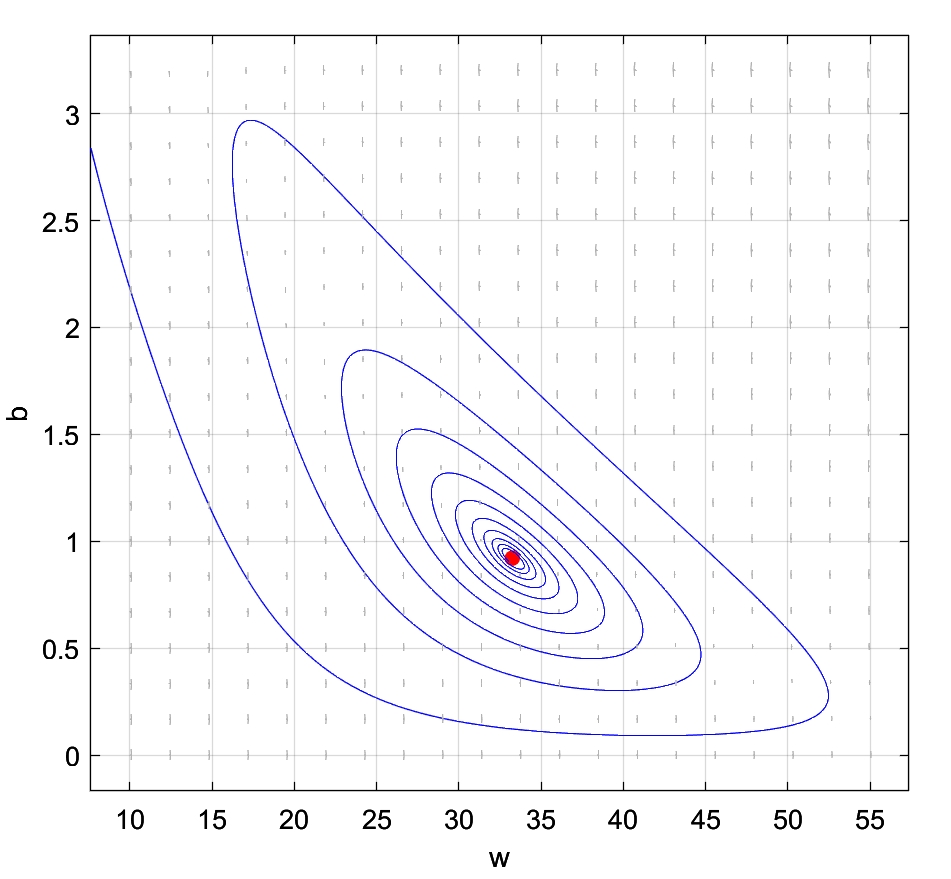}} \hspace{0.5pt}
	\subfigure[$R_1<R<R_{LP}^1$]{\includegraphics[width=0.305\textwidth]{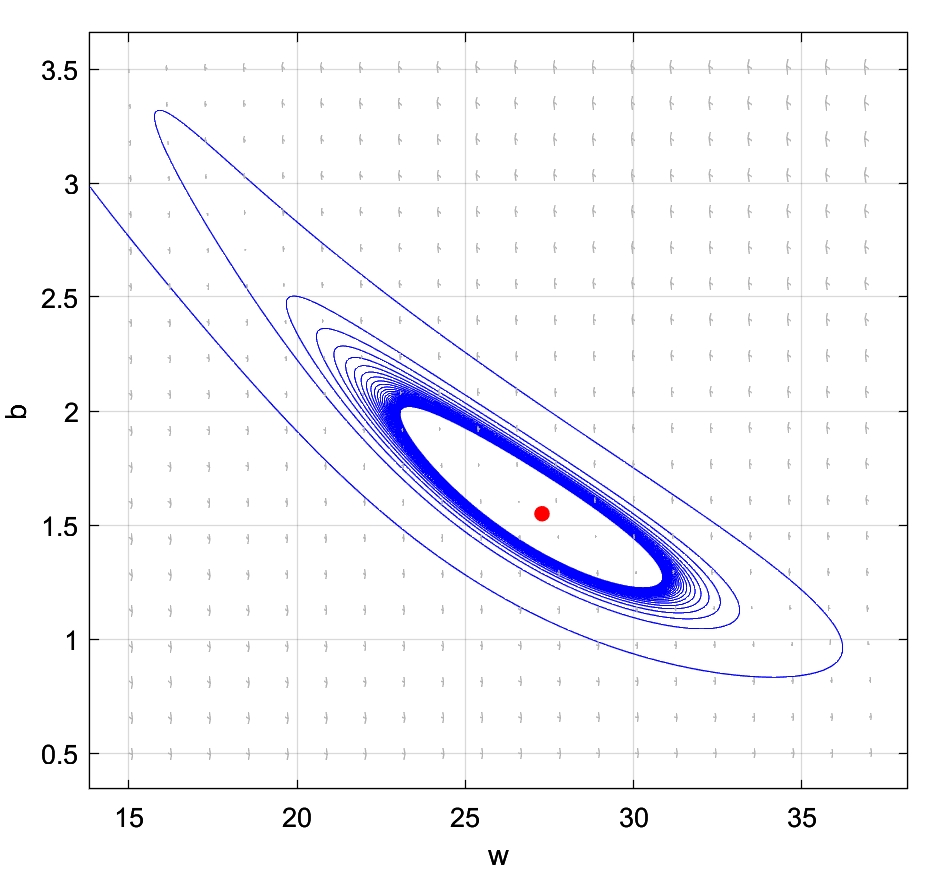}} \hspace{0.5pt}
	
	\subfigure[$R_{LP}^1<R<R_2$]{\includegraphics[width=0.32\textwidth]{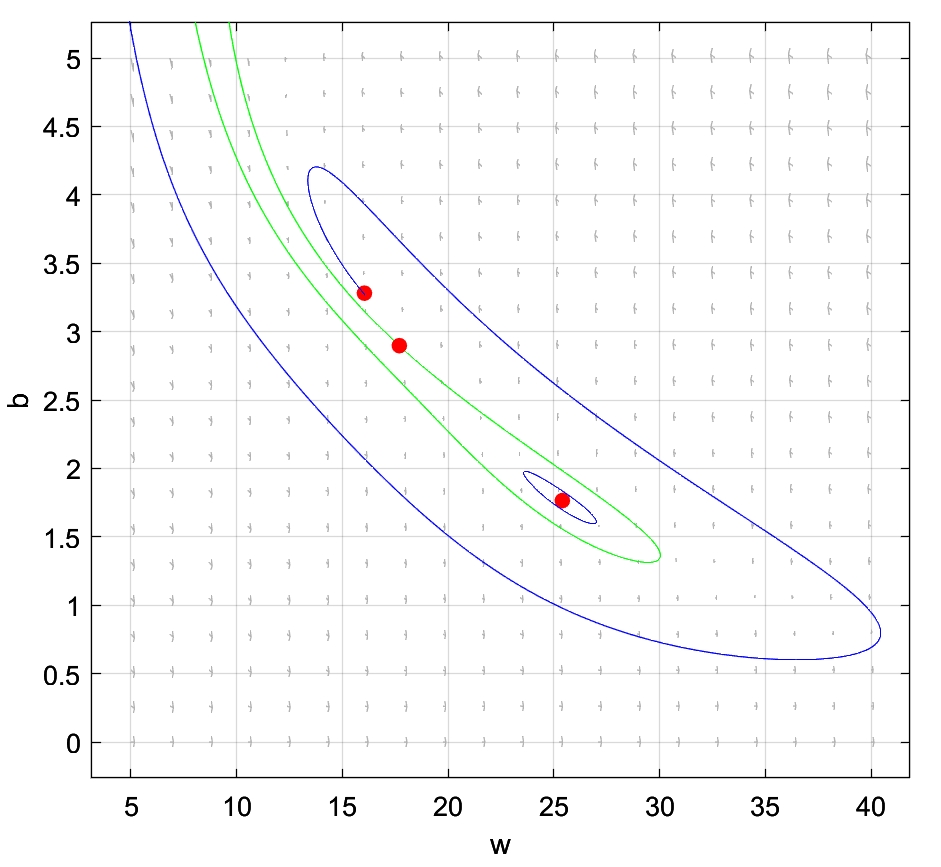}}\hspace{0.5pt}
	\subfigure[$R_2<R<R_{LP}^2$]{\includegraphics[width=0.32\textwidth]{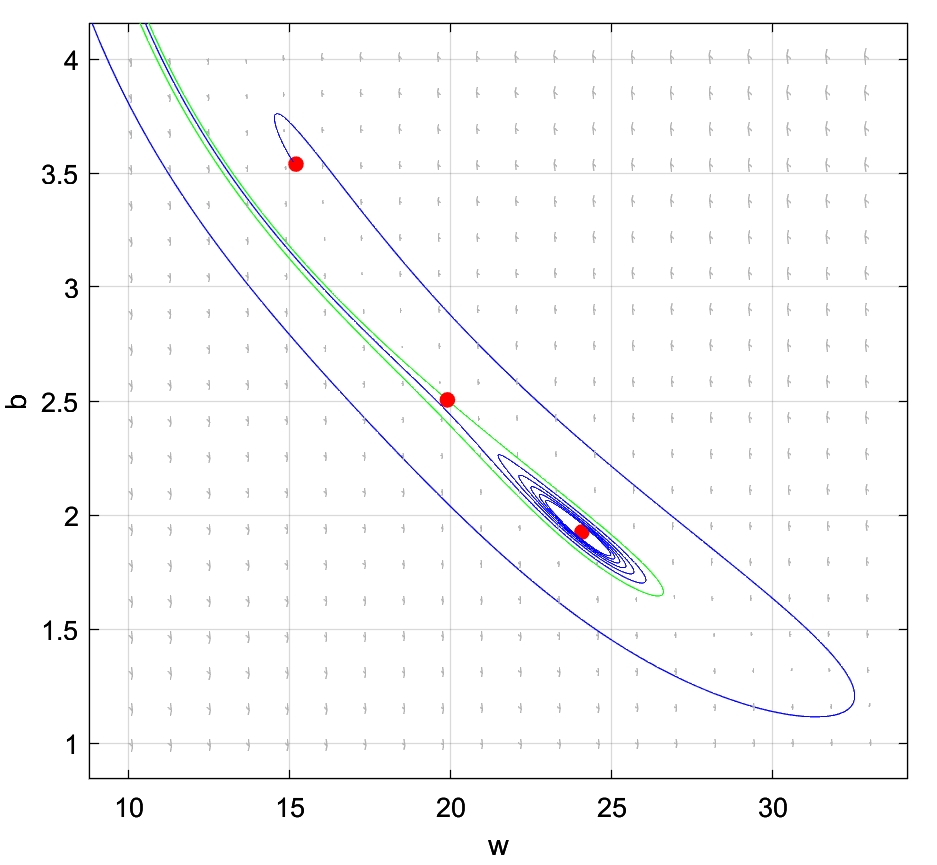}}\hspace{0.5pt}
	\subfigure[$R>R_{LP}^2$]{\includegraphics[width=0.32\textwidth]{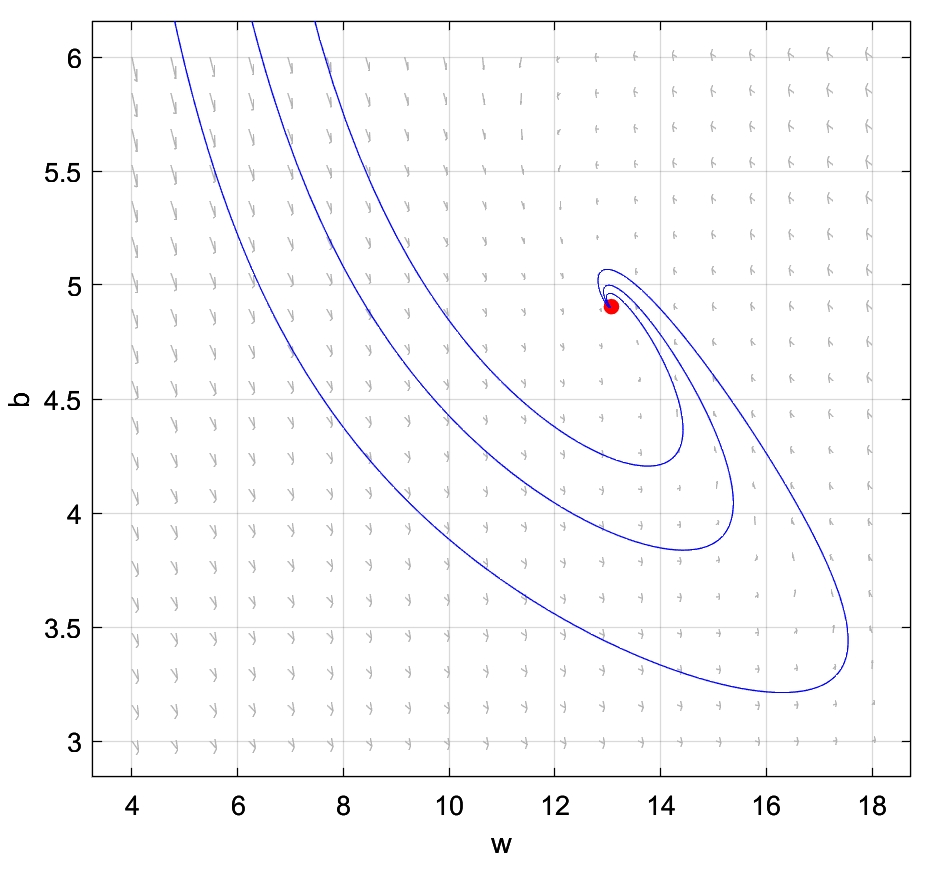}}
	\caption{Bifurcation diagram and the corresponding evolution of phase portraits for $\theta_2 = 0.21$. In (a), the blue curve represents the biomass $b$ as a function of the rainfall rate $R$, while the green curve denotes the stable limit cycle. In (b), $R = 7$; (c) $R = 7.8$; (d) $ R = 7.92$; (e) $R = 7.97$; (f) $ R = 9$. Other parameters: $
a=0.1,\ \delta=0.12,\ \rho=1,\ \mu=10,\ \theta_1=2.5$.}
	\label{fig:5.1.4}
\end{figure}

For $\theta_2 = 0.21$, the bifurcation diagram and the evolution of phase portraits as $ R $ increases are shown in, the evolution of phase portraits as the rainfall rate $ R $ increases is shown in Fig. \ref{fig:5.1.4}. The bifurcation values are $ R^* = 4.2857143 $, $ R_1 = 7.6871373 $ and $ R_2 = 7.9351011 $, as well as $ R_{LP}^1 = 7.9104673 $ and $ R_{LP}^2 = 7.9959705 $. Numerical calculations show that both Hopf bifurcations are supercritical, so the bifurcating periodic orbits are stable.
Apparently when $ 0 < R < R^* $, the bare-soil equilibrium $ E_0 $ is the attractor;
when $ R^* < R < R_1 $, $ E_0 $ becomes unstable, and the system has a unique positive equilibrium $ E_{31}^* $, which is stable (see Fig. \ref{fig:5.1.4} (b)).
As $ R $ increases to $ R_1 < R < R_{LP}^1 $, $ E_{31}^* $ becomes unstable, and a stable limit cycle appears (see Fig. \ref{fig:5.1.4} (c)).
When $ R_{LP}^1 < R < R_2 $, the system has three positive equilibria: $ E_{31}^* $, $ E_{32}^* $, and $ E_{33}^* $, where $ E_{31}^* $ and $ E_{32}^* $ are unstable.
The system (\ref{eq2.1}) admits the bistability of the positive equilibrium $ E_{33}^* $ and a stable limit cycle with the stable manifolds (green orbits) of the saddle $ E_{32}^* $ as the separatrix of attraction (see Fig. \ref{fig:5.1.4} (d)).
As $ R $ increases to $R_2  < R < R_{LP}^2 $, the system shows bistability between $ E_{31}^* $ and $ E_{33}^* $, and the boundary is still determined by the stable manifold of the saddle (see Fig. \ref{fig:5.1.4} (e)).
When $ R > R_{LP}^2 $, the system is monostable, with all solutions tending to the positive equilibrium $ E_{33}^* $ (see Fig. \ref{fig:5.1.4} (f)).

%其他情况！！！！！！！！！！！！！

%For (S1), system (\ref{eq2.1}) may have up to two positive equilibria, $ E_{21}^* $ and $ E_{22}^* $. In this case, a saddle-node bifurcation occurs at the point labeled \lq\lq LP" in Fig. \ref{fig:5.1.2} (a), where $ E_{21}^* $ is always a saddle and unstable, as shown by Theorem \ref{thm2.5}. System (\ref{eq2.1}) undergoes only one Hopf bifurcation at $ R = R_1 $, which is subcritical, resulting in an unstable periodic orbit.
%Additionally, the branch of limit cycles does not form a closed loop, but instead follows an \lq\lq open-ended" trajectory. The bifurcation structure resembles a \lq\lq lotus'' with the Hopf bifurcation point at the base and a homoclinic bifurcation at the top. Specifically, when $ R = R_{\text{HL}} $, the limit cycle converges to a homoclinic orbit based on the saddle
%equilibrium, as illustrated in Fig. \ref{fig:5.1.2} (b).
%\begin{figure}[!h]
%	\centering
	% 两张图片并列
%	\subfigure[]{\includegraphics[width=0.4\textwidth]{Figure/ode/S3-1.png}} \hspace{5pt}
%	\subfigure[]{\includegraphics[width=0.36\textwidth]{Figure/ode/S3-2.png}}
%	\caption{Bifurcation diagrams and the existence of a homoclinic orbit for $(S1)$. In (a), the blue curve represents the biomass $b$ as a function of the rainfall rate $R$, while the cyan curve denotes the unstable limit cycle. In (b), a homoclinic orbit exists when  $ R = 0.2821011 $.}
%	\label{fig:5.1.2}
%\end{figure}

\begin{figure}[!h]
	\centering
	% 两张图片并列
	\subfigure[]{\includegraphics[width=0.35\textwidth]{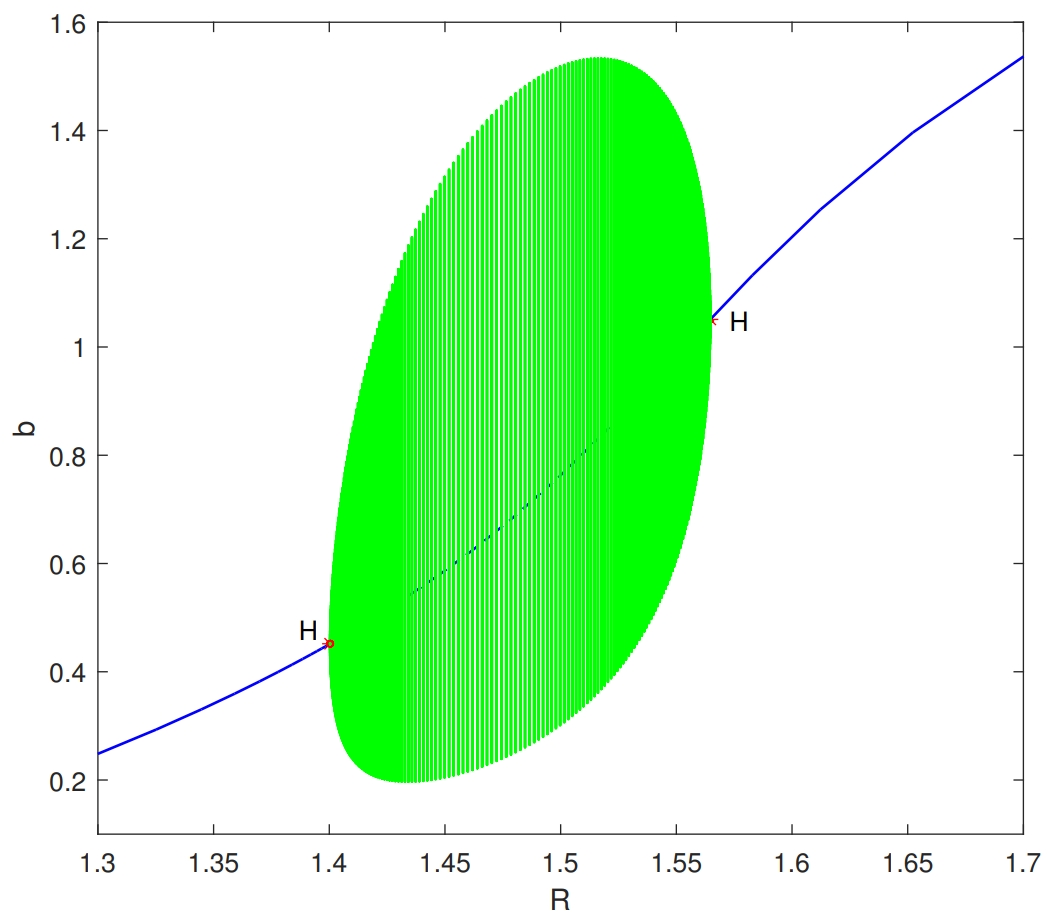}} \hspace{5pt}
	\subfigure[]{\includegraphics[width=0.34\textwidth]{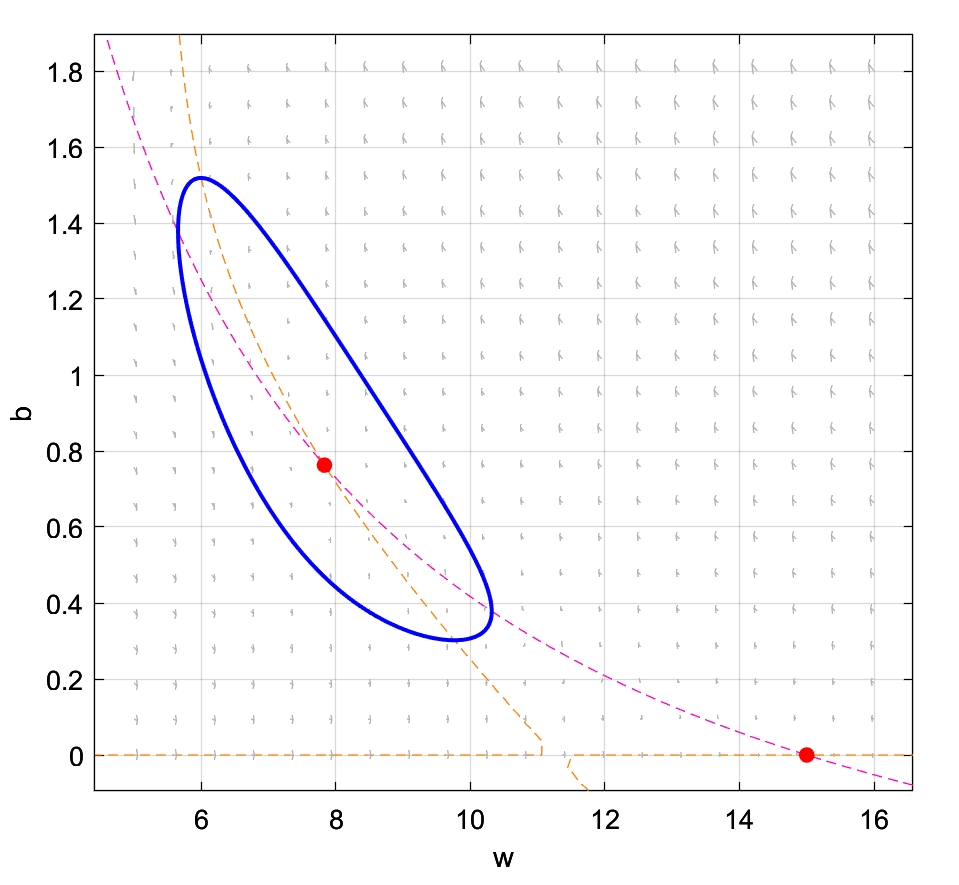}}
	\subfigure[]{\includegraphics[width=0.35\textwidth]{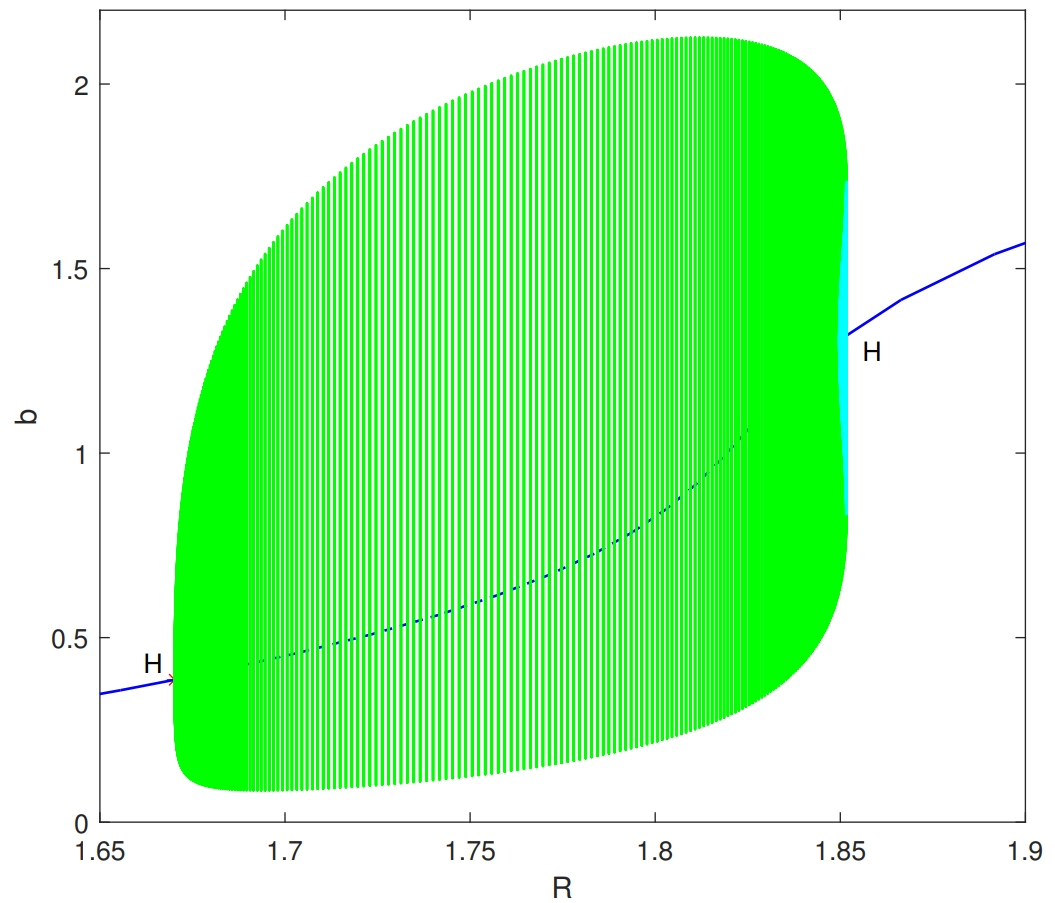}} \hspace{5pt}
	\subfigure[]{\includegraphics[width=0.34\textwidth]{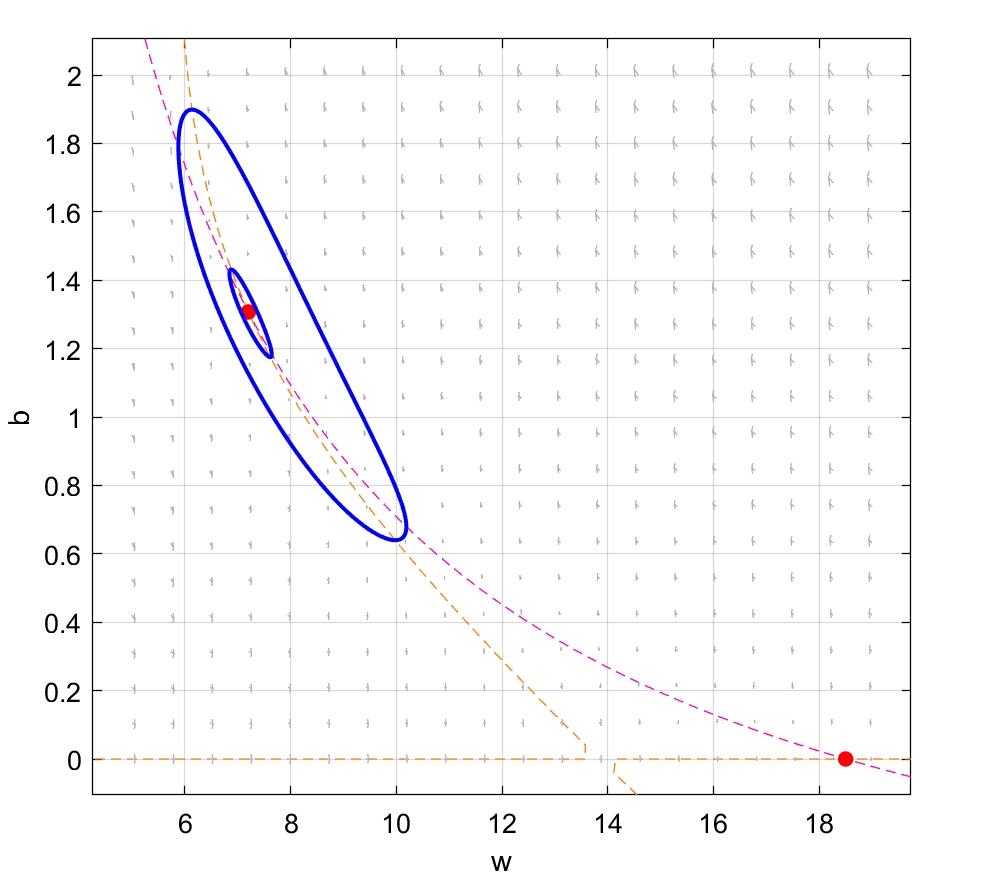}}
	\caption{Bifurcation diagrams and phase portraits of limit cycles. In (a) and (c), the blue curve represents the biomass $b$ as a function of the rainfall rate $R$, while the green curve denotes the stable limit cycle. The corresponding phase portraits are shown in (b) and (d). In (a) and (b), $\theta_2=0.8 $ with $ R = 1.5 $ in (b); in (c) and (d), $ \theta_2=0.65 $ with $ R = 1.85 $ in (d). Other parameters: $
a=0.1,\ \delta=0.12,\ \rho=1,\ \mu=10,\ \theta_1=5$.}
	\label{fig:5.1.1}
\end{figure}

Next,  we  fix the parameters as
$
a=0.1,\ \delta=0.12,\ \rho=1,\ \mu=10,\ \theta_1=5$ and vary $\theta_2, R$
 to investigate  the dynamics of  system (\ref{eq2.1}). For this case, system (\ref{eq2.1}) has a unique positive equilibrium $E_1^*$ and two Hopf bifurcation points  may exist.

For $\theta_2=0.8$, two Hopf bifurcation points occur at $ R = R_1 $ and $ R = R_2 $.
A bounded branch of limit cycles emerges from the equilibrium $ E^* $ at $ R = R_1 $ and eventually vanishes at
$ R = R_2 $, forming a closed-loop structure, or a \lq\lq bubble".
The bifurcation diagram and the corresponding phase portrait of the limit cycles are presented in Fig. \ref{fig:5.1.1} (a) and (b). Moreover, numerical calculations show that the first Lyapunov coefficients at both bifurcation points are negative, indicating that the bifurcating periodic orbits are stable.

For $\theta_2=0.65$, the system also exhibits two Hopf bifurcation points, as illustrated in Fig. \ref{fig:5.1.1} (c). However, unlike in Fig. \ref{fig:5.1.1} (a), the Hopf bifurcation at $ R = R_1 $ is supercritical, whereas at $ R = R_2 $, it is subcritical. The branch of limit cycles still forms a closed loop, but its shape resembles a \lq\lq heart'' due to a saddle-node bifurcation of periodic orbits occurring at $ R = R_3 > R_2 $. This bifurcation results in the coexistence of two periodic orbits within the range $ R_2 < R < R_3 $.
The corresponding phase portrait is shown in Fig. \ref{fig:5.1.1} (d), where the larger limit cycle is stable, while the smaller one is unstable.
Consequently, the system exhibits bistability, with both a stable equilibrium $ E_1^* $ and a stable limit cycle coexisting for $ R_2 < R < R_3 $.

\begin{figure}[!h]
	\centering
	% 两张图片并列
	\subfigure[$\theta_1=15,\, \theta_2=3.95$]{\includegraphics[width=0.32\textwidth]{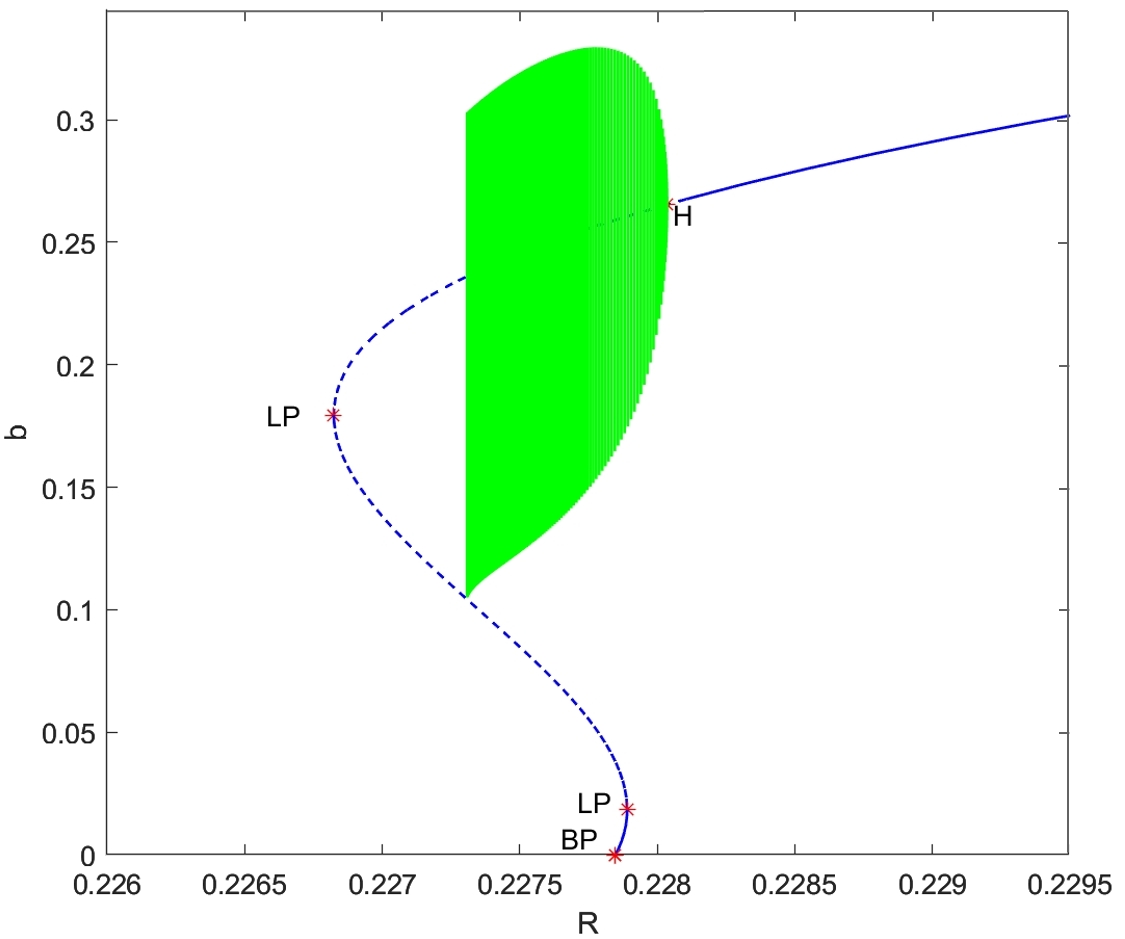}} \hspace{0.5pt}
\subfigure[$\theta_1=15,\, \theta_2=4$]{\includegraphics[width=0.32\textwidth]{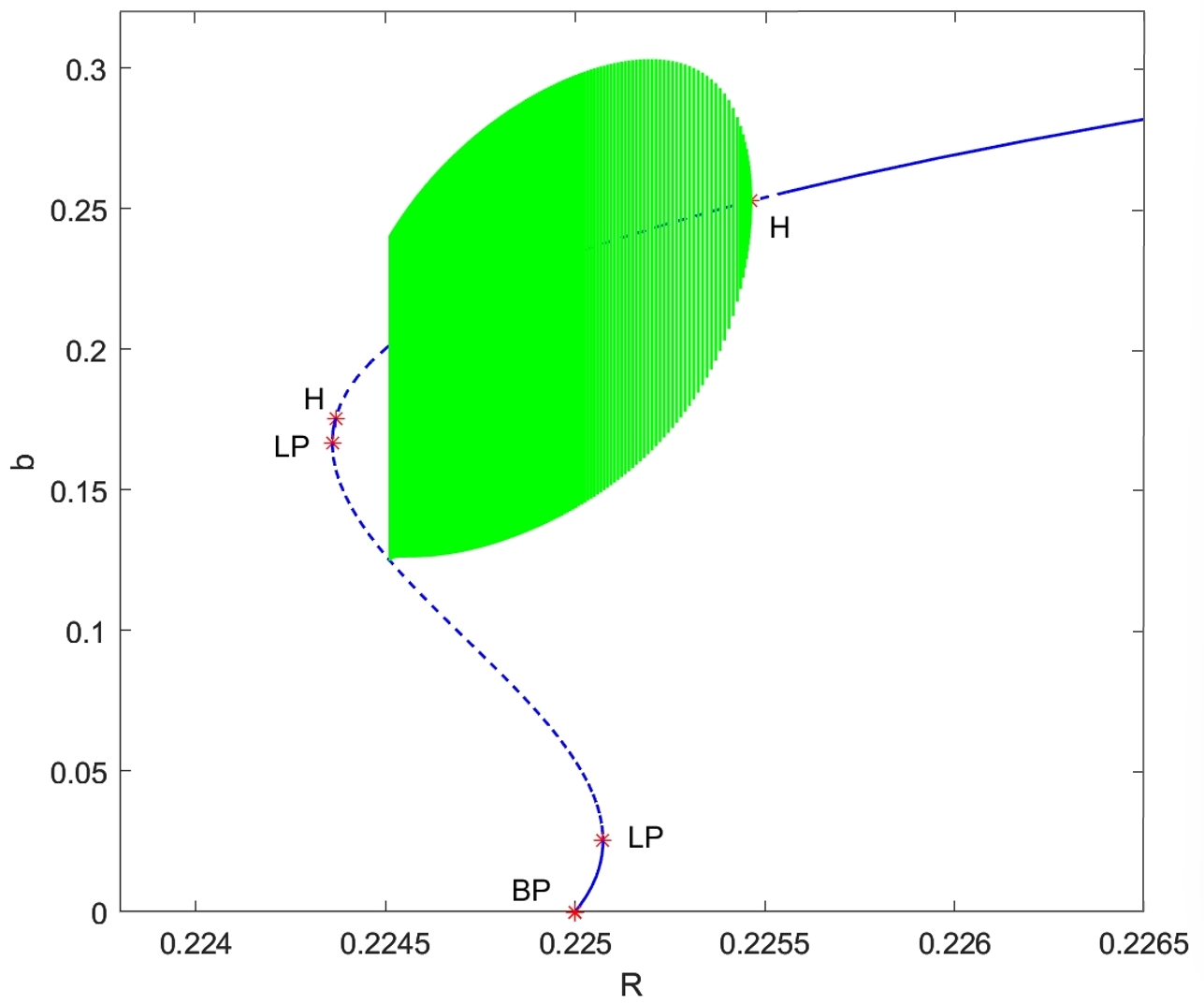}} \hspace{0.5pt}
	\subfigure[$\theta_1=5,\, \theta_2=0.55$]{\includegraphics[width=0.308\textwidth]{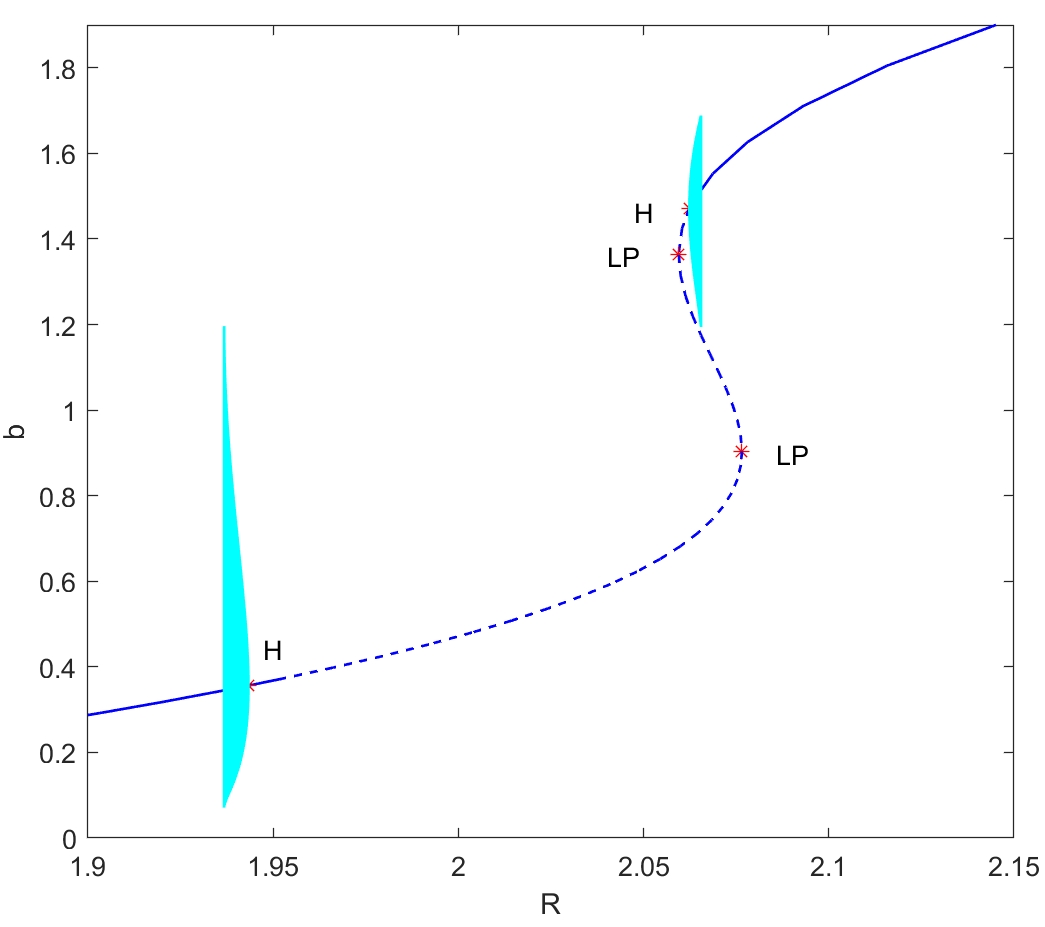}}
	\caption{Bifurcation diagrams of Hopf bifurcation at  $E_{33}^*$. Other parameters: $
a=0.1,\ \delta=0.12,\ \rho=1,\ \mu=10$.}
	\label{fig:5.1.7}
\end{figure}

Finally, we note that,  when system (\ref{eq2.1}) admits three positive equilibria, Hopf bifurcations could also occur  at $ E_{33}^* $, and there may exist one or two Hopf bifurcation points which may be subcritical or
supercritical resulting in   stable or unstable limit cycles, as shown in Fig. \ref{fig:5.1.7}.

\subsection{The spatiotemporal dynamics near the Turing$-$Hopf bifurcation point}
To analyze the effect of the diffusion coefficient $d_1$ on spatiotemporal patterns, we fix the parameters $a, \delta, \rho, \mu, \theta_1, \theta_2, l,$  $d_2$, and treat $R$ and $d_1$ as bifurcation parameters  for numerical simulations. Setting $\theta_1=5$ and $\theta_2=0.8$, we obtain the corresponding bifurcation diagram shown in Fig. \ref{fig:5.1.1} (a). As discussed earlier,
a Hopf bifurcation occurs at $R_1 = 1.400362$ and $R_2=1.564687$ such that $E^*$ is stable in $R_0<R<R_1$ or $R>R_2$ and unstable in $R_1<R<R_2$. In the following, we focus on the range $R\in(1.125, 2.07019)$, as a necessary condition for Turing instability is $a_{22}>0$, which holds for $R$ within this interval. Given $d_2=0.01$ and $l=3$, it follows from (\ref{eq3.5}) that $k_0 = 7$ for $1.37783 \leq R < 1.48432$ and $k_0 = 8$ for $1.48432 \leq R < 1.68646$. Consequently, the critical Turing bifurcation curve is one of $J_7 = 0, J_8 = 0$ and $J_9 = 0$. Further computations indicate that $d_1(R,7^2) \leq d_1(R,8^2)$ for $1.37783 \leq R \leq 1.42198$, $d_1(R,7^2) > d_1(R,8^2)$ for $1.42198 < R < 1.48432$, and $d_1(R,8^2) < d_1(R,9^2)$ for $1.48432 \leq R < 1.68646$. Thus, we obtain $k_*=7$ for $1.37783 \leq R \leq 1.42198$ and $k_*=8$ for $1.42198 < R < 1.68646$. The critical Turing bifurcation curve is defined as:
$$
\mathrm{T} :=
\begin{cases}
	J_7 = 0, & 1.37783 \leq R \leq 1.42198, \\
	J_8 = 0, & 1.42198 < R < 1.68646.
\end{cases}
$$

The Turing bifurcation curve $J_7 = 0$ intersects the Hopf bifurcation curve $R = R_1$ at $(R, d_1)=(1.400362, 0.04211)$, while $J_8 = 0$ intersects $R = R_2$ at $(R, d_1)=(1.564687, 0.03114)$. From Theorem \ref{thm3.3}, these intersections define two TH bifurcation points. Since their theoretical properties are similar, we focus on the TH bifurcation point $(R, d_1)=(R^*, d_1^*)$ with $R^*=R_2=1.564687$ and $d_1^*=0.03114$.

Combining the above analysis, we present these bifurcation curves in Fig. \ref{fig:5.3.1}(b). The solid red curve (T) represents the Turing bifurcation $J_8 = 0$, while the dotted black vertical lines $\mathrm{H}_1$ and $\mathrm{H}_2$ indicate the two Hopf bifurcation curves. These curves partition the plane into different regions, where in region $S$, the positive steady state is asymptotically stable, and the shadowed area represents the Turing instability region.

\begin{figure}[!h]
	\centering
	\centering
	\includegraphics[width=5cm,height=4.5cm]{}
	%\subfigure[]{\includegraphics[width=0.49\textwidth]{Figure/pde/d1-k.eps}} \hspace{0.1pt}
	%\subfigure[]{\includegraphics[width=0.46\textwidth]{Figure/pde/TH-point.png}}
	\caption{ Bifurcation diagram for the diffusive water-vegetation model (\ref{eq1.1}) with $\theta_1=5, \theta_2=0.8, d_2=0.01$ and $l=3$ in the $R-d_1$ plane. The curves
		marked by $\mathrm{H}_1$ and $\mathrm{H}_2$ are Hopf bifurcation curves, and the curve marked by $\mathrm{T}$ is the Turing bifurcation curve.
		The shadow region is the Turing instability region, and the region marked by S is the stable region with respect to $E^*$.}
	\label{fig:5.3.1}
\end{figure}

Next, we are curious about the dynamics of the system near the TH bifurcation point. When $(R, d_1)$ is chosen at the TH singularity $(R^*, d_1^*)$, we have $(w^*, b^*)=(6.91998, 1.05093)$. By direct calculation, the normal form truncated to
the third-order term is
\begin{equation}\label{eq5.1}
	\begin{cases}
		\dot{\rho}=-\rho (0.33423 \varepsilon_1 + \rho^2+1.11749 s^2),\\
		\dot{s}=s(-0.64121 \varepsilon_1+3.76777 \varepsilon_2-7.43065 \rho^2-s^2).
	\end{cases}
\end{equation}

Note that $\rho>0$. System (\ref{eq5.1}) exhibits the following equilibria
\begin{align*}
	&\tilde{A}_0 = (0,\,0),\\
	&\tilde{A}_1 = (\sqrt{-0.33423 \varepsilon_1}, \,0),\,\, \text{ for }\varepsilon_1 < 0,\\
	&\tilde{A}_2^{\pm} = (0,\, \pm\sqrt{-0.64121\varepsilon_1+3.76777\varepsilon_2}),\,\, \text{ for }-0.64121\varepsilon_1+3.76777\varepsilon_2 > 0,\\
	&\tilde{A}_3^{\pm} = (\sqrt{-0.05234 \varepsilon_1+0.57648 \varepsilon_2},\, \pm\sqrt{-(0.25225 \varepsilon_1+0.51587 \varepsilon_2)}), \\
	&\ \ \ \ \ \ \ \ \ \ \  \text{for} -0.05234 \varepsilon_1+0.57648 \varepsilon_2>0 \,\,\text{and} \,\,0.25225 \varepsilon_1+0.51587 \varepsilon_2<0.
\end{align*}

According to the existence and stability of these equilibria, the critical bifurcation line is
obtained as follows:
\begin{align*}
	\mathcal{H}_0 &: \varepsilon_1 = 0;&
	\mathcal{T} &: \varepsilon_2 = 0.17018\varepsilon_1;\\
	\mathcal{T}_1 &: \varepsilon_2 = 0.09079\varepsilon_1,\quad\varepsilon_1 < 0;&
	\mathcal{T}_2 &: \varepsilon_2 = -0.48898\varepsilon_1,\quad\varepsilon_1 < 0.
\end{align*}
These bifurcation lines divide the $\varepsilon_1- \varepsilon_2$ plane into six regions,  denoted as $D_i
, i = 1, 2\cdots\ , 6$ (see Fig. \ref{fig:5.3.2}(a)), and the dynamical
behavior of the system in each region is different (see Fig. \ref{fig:5.3.2}(b)). Moreover, the existence and stability properties of the steady states in the six regions are listed in Table \ref{tab1}, which is consistent with Fig.  \ref{fig:5.3.2}(b).
\begin{figure}[!b]
	\centering
	% 两张图片并列
	\subfigure[]{\includegraphics[width=0.44\textwidth]{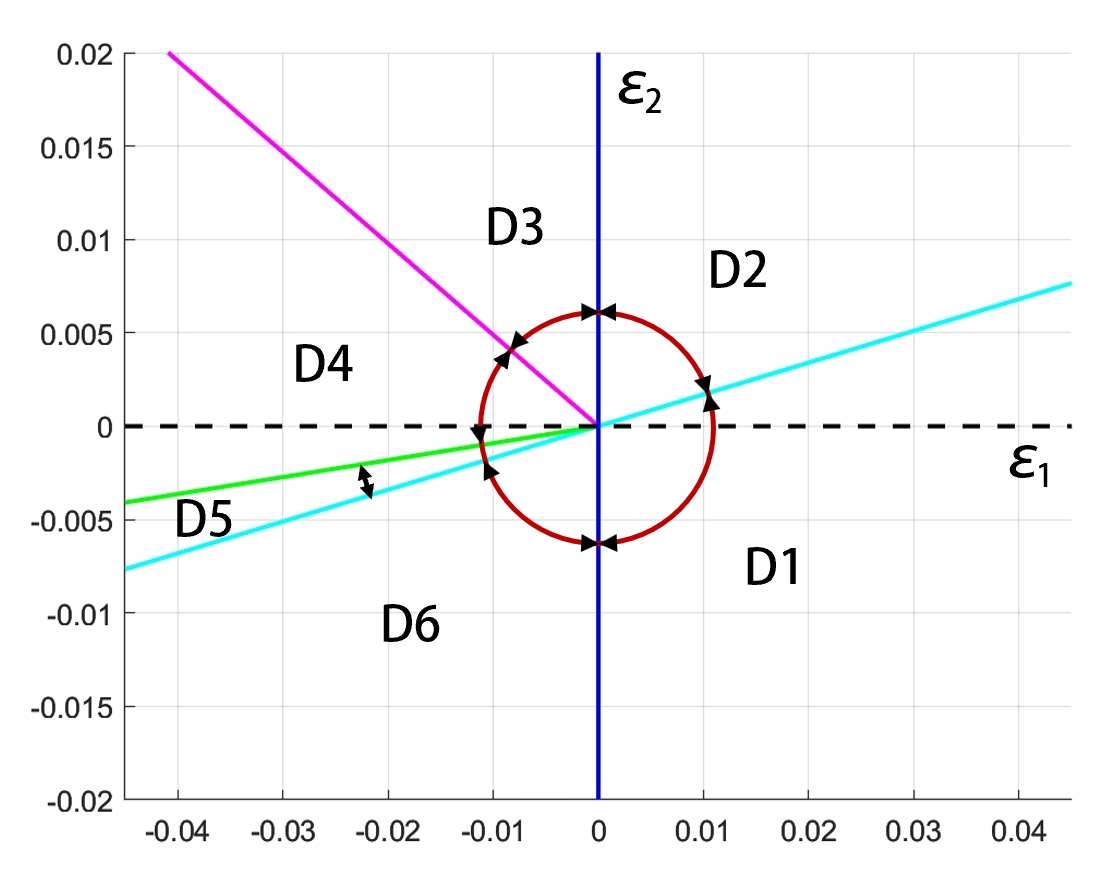}} \hfill
	\subfigure[]{\includegraphics[width=0.48\textwidth]{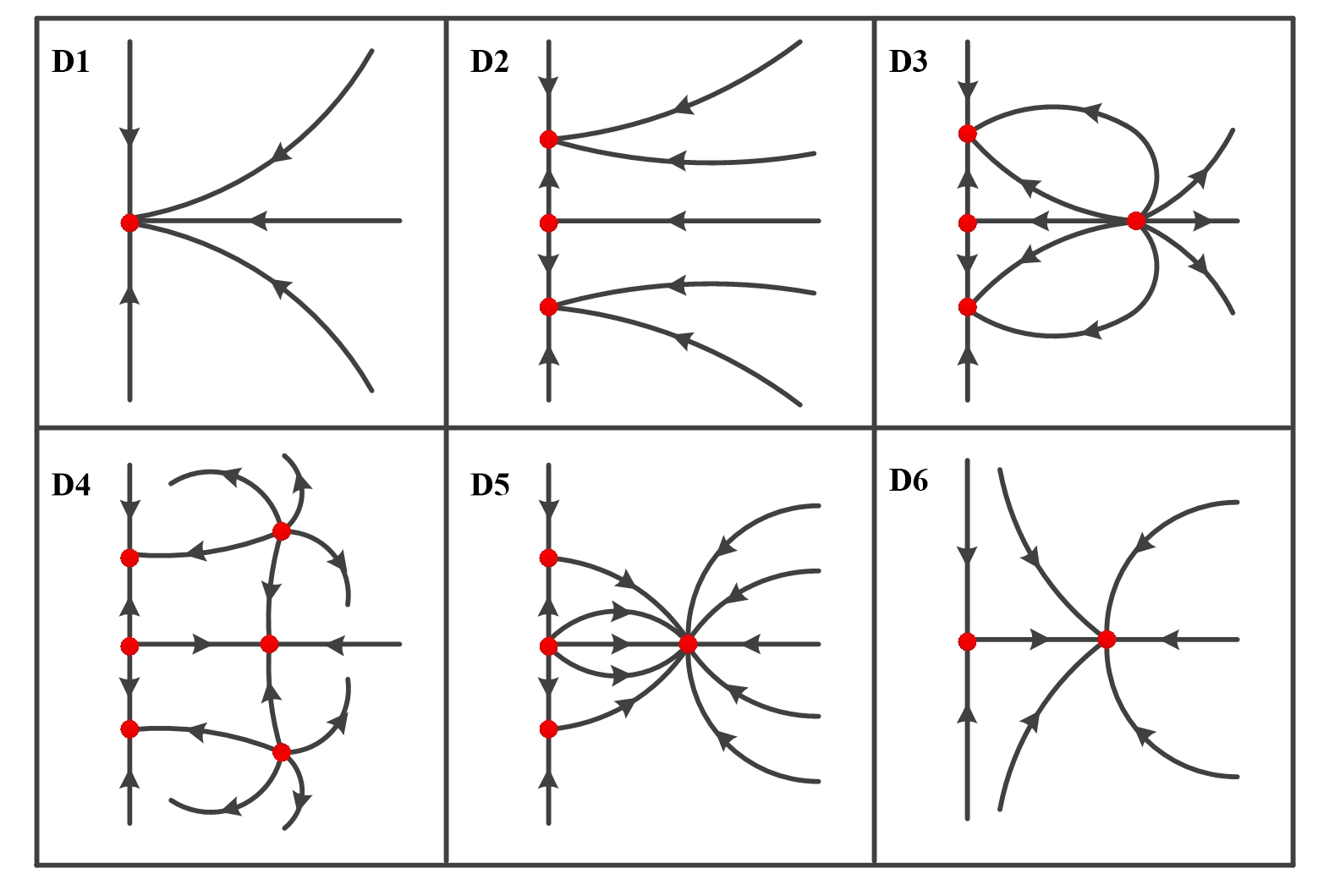}}
	\caption{Bifurcation diagram of the normal form system (\ref{eq5.1}) in the $\varepsilon_1-\varepsilon_2$ plane and the corresponding phase portraits.}
	\label{fig:5.3.2}
\end{figure}
\begin{table}[!h]
	\centering
	\caption{The stability of equilibria in each region.}
	\label{tab1}
	\begin{adjustbox}{width=0.8\textwidth} % 调整表格宽度为文本宽度的80%
		\small % 使用小字体
		\begin{tabular}{lcccccc}
			\toprule
			\diagbox{regions}{equilibria} & $\tilde{A}_0$ & $\tilde{A}_1$ & $\tilde{A}_2^+$ & $\tilde{A}_2^-$ & $\tilde{A}_3^+$ & $\tilde{A}_3^-$ \\
			\midrule
			$D_1$ & stable & --- & --- & --- & --- & --- \\
			$D_2$ & unstable & --- & stable & stable & --- & --- \\
			$D_3$ & unstable & unstable & stable & stable & --- & --- \\
			$D_4$ & unstable & stable & stable & stable & unstable & unstable \\
			$D_5$ & unstable & stable & unstable & unstable & --- & --- \\
			$D_6$ & unstable & stable & --- & --- & --- & --- \\
			\bottomrule
			\multicolumn{7}{l}{--- indicates that the equilibrium point does not exist.} \\
		\end{tabular}
	\end{adjustbox}
\end{table}

As is well known, the equilibria $\tilde{A}_0, \tilde{A}_1, \tilde{A}_2^{\pm}$, and $\tilde{A}_3^{\pm}$ of system (\ref{eq5.1}) correspond to the constant equilibria, the spatially homogeneous periodic solution, the spatially inhomogeneous steady states, and the spatially inhomogeneous periodic solution of system (\ref{eq1.1}), respectively. Therefore, the dynamical behavior of system (\ref{eq1.1}) near the TH singularity in the $R-d_1$ plane can be determined by the dynamical behavior of system (\ref{eq5.1}).

\begin{figure}[!b]
	\centering
	% 两张图片并列
	\subfigure[]{\includegraphics[width=0.49\textwidth]{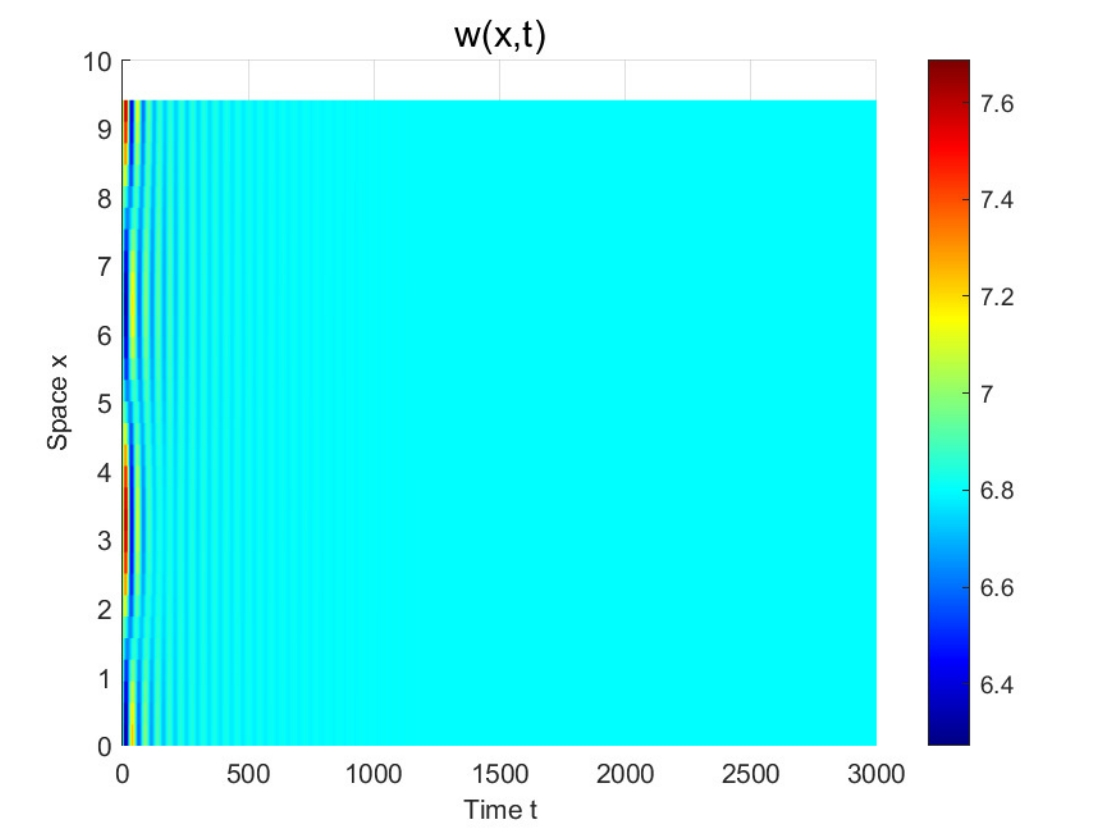}} \hfill
	\subfigure[]{\includegraphics[width=0.49\textwidth]{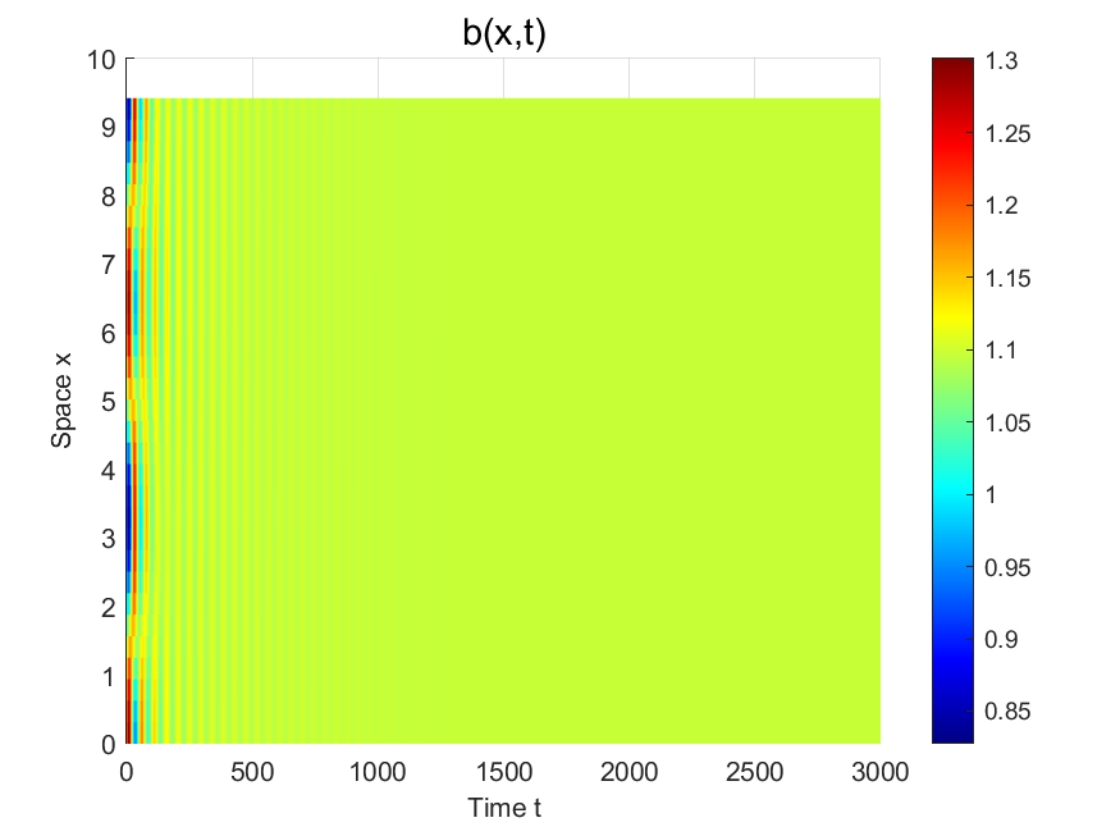}}
	\caption{For $(\varepsilon_1, \varepsilon_2)= (0.01, -0.005) \in D_1$, the positive constant steady state $E^*$ in system (\ref{eq1.1}) is asymptotically stable. The initial conditions are set as $w(x, 0)=6.91998+0.1\text{cos} x,\, b(x, 0)=1.05093+0.1\text{cos} x$.}
	\label{fig:5.3.3}
\end{figure}
When $(\varepsilon_1, \varepsilon_2) \in D_1$, the normal form system (\ref{eq5.1}) has a stable equilibrium $\tilde{A}_0$, indicating that the spatially homogeneous steady state $E^*$ of the diffusive water-vegetation model system (\ref{eq1.1}) is asymptotically stable, as illustrated in Fig. \ref{fig:5.3.3}.

\begin{figure}[!h]
	\centering
	% 两张图片并列
	\subfigure[]{\includegraphics[width=0.49\textwidth]{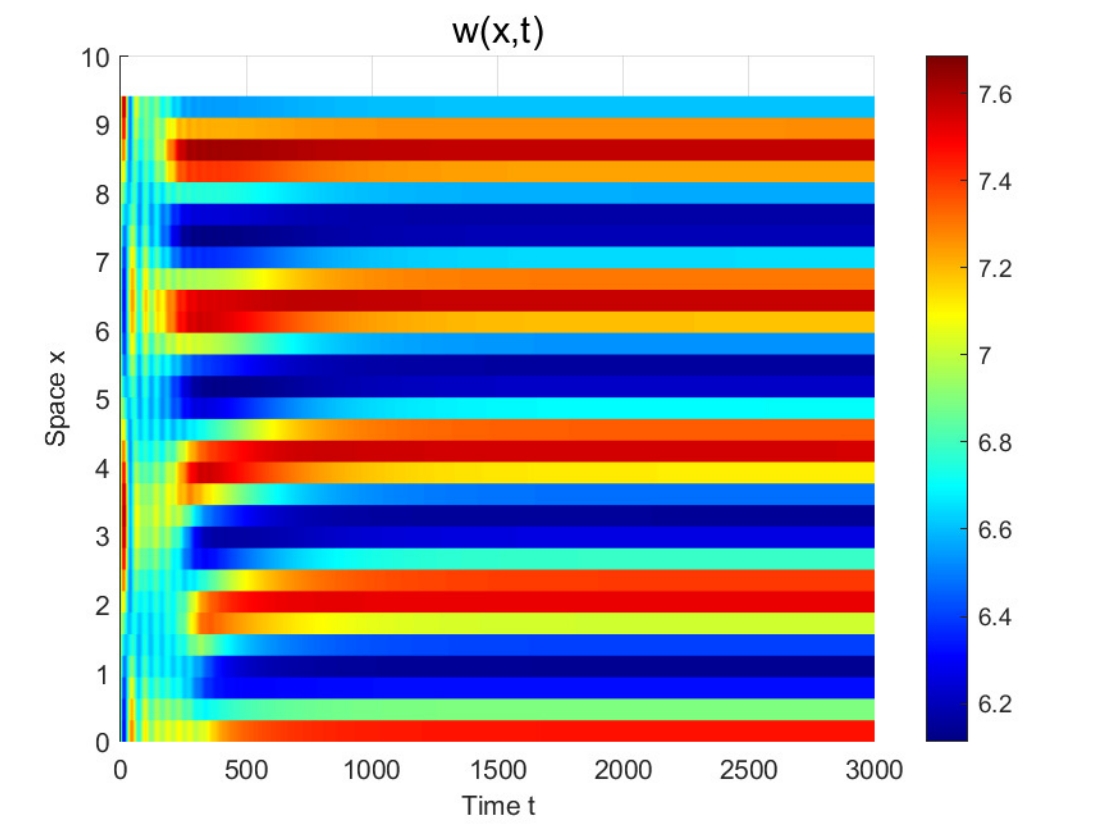}} \hfill
	\subfigure[]{\includegraphics[width=0.49\textwidth]{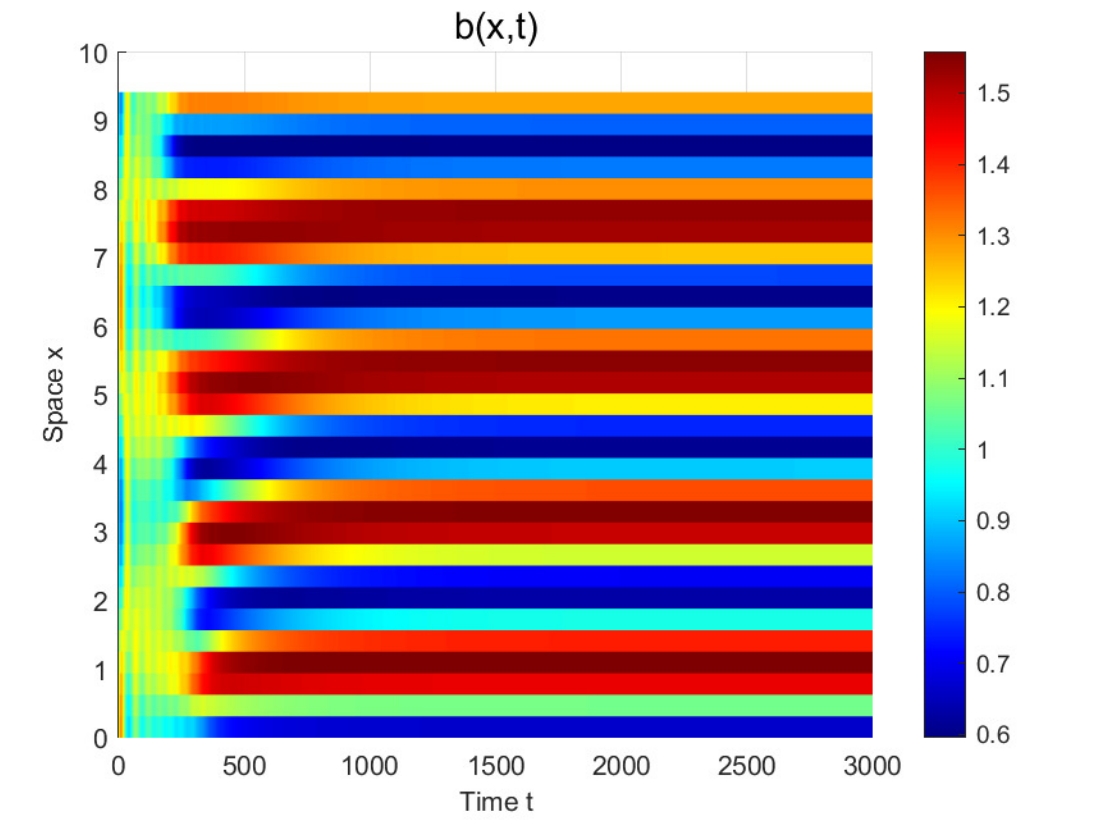}}
	\caption{When $(\varepsilon_1, \varepsilon_2)= (0.01, 0.01) \in D_2$, the spatially inhomogeneous steady states are stable. The initial conditions are set as $w(x, 0)=6.91998+0.1\text{cos} x,\, b(x, 0)=1.05093+0.1\text{cos} x$.}
	\label{fig:5.3.4}
\end{figure}
When $(\varepsilon_1, \varepsilon_2) \in D_2$, system (\ref{eq5.1}) has three equilibria: $\tilde{A}_0$ and $\tilde{A}_2^{\pm}$. Among them, $\tilde{A}_2^+$ and $\tilde{A}_2^-$ are stable, indicating that system (\ref{eq1.1}) possesses two stable spatially inhomogeneous steady states (called \lq\lq Turing pattern"), as illustrated in Fig. \ref{fig:5.3.4}.

\begin{figure}[!h]
	\centering
	% 两张图片并列
	\subfigure[]{\includegraphics[width=0.49\textwidth]{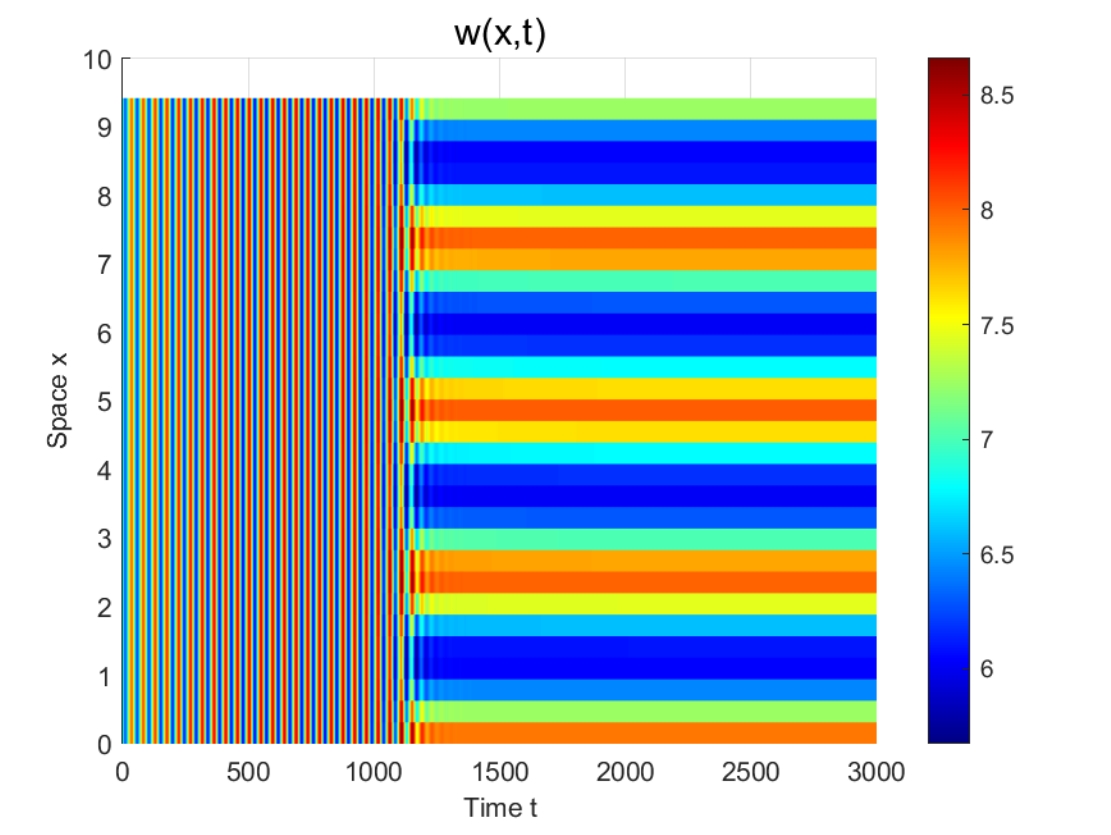}} \hfill
	\subfigure[]{\includegraphics[width=0.49\textwidth]{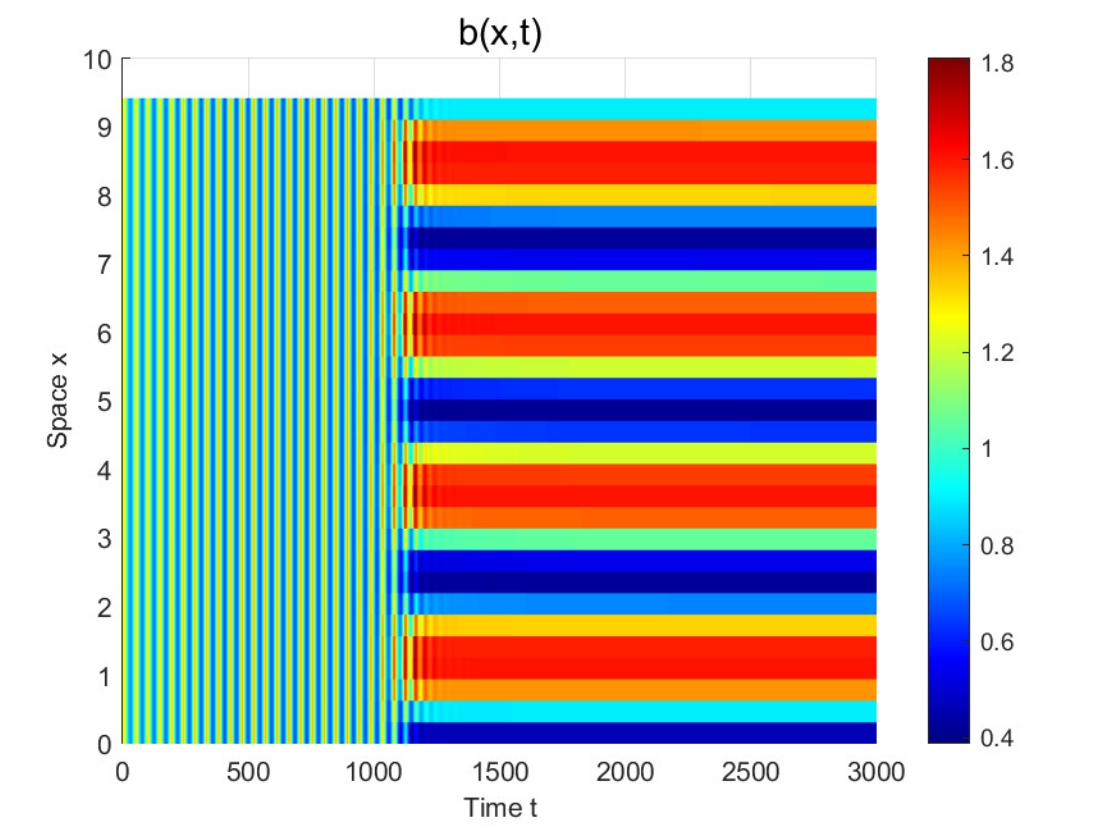}}
	\caption{When $(\varepsilon_1, \varepsilon_2)= (-0.01, 0.015) \in D_3$, system (\ref{eq1.1}) exhibits a pair of stable spatially inhomogeneous steady states alongside an unstable spatially homogeneous periodic solution. The initial values are set as $w(x, 0) = 6.91998 + 0.1$, $b(x, 0) = 1.05093 + 0.1$.}
	\label{fig:5.3.5}
\end{figure}
When $(\varepsilon_1, \varepsilon_2) \in D_3$, system (\ref{eq5.1}) has four equilibria: $\tilde{A}_0$, $\tilde{A}_1$, and $\tilde{A}_2^{\pm}$. While $\tilde{A}_2^+$ and $\tilde{A}_2^-$ are stable, $\tilde{A}_1$ is unstable. Consequently, system (\ref{eq1.1}) exhibits a pair of stable spatially inhomogeneous steady states along with an unstable spatially homogeneous periodic solution, as illustrated in Fig. \ref{fig:5.3.5}. This figure highlights a pattern transition, where unstable spatially homogeneous periodic solutions evolve into stable spatially inhomogeneous steady states.

\begin{figure}[!h]
	\centering
	% 两张图片并列
	\subfigure[]{\includegraphics[width=0.49\textwidth]{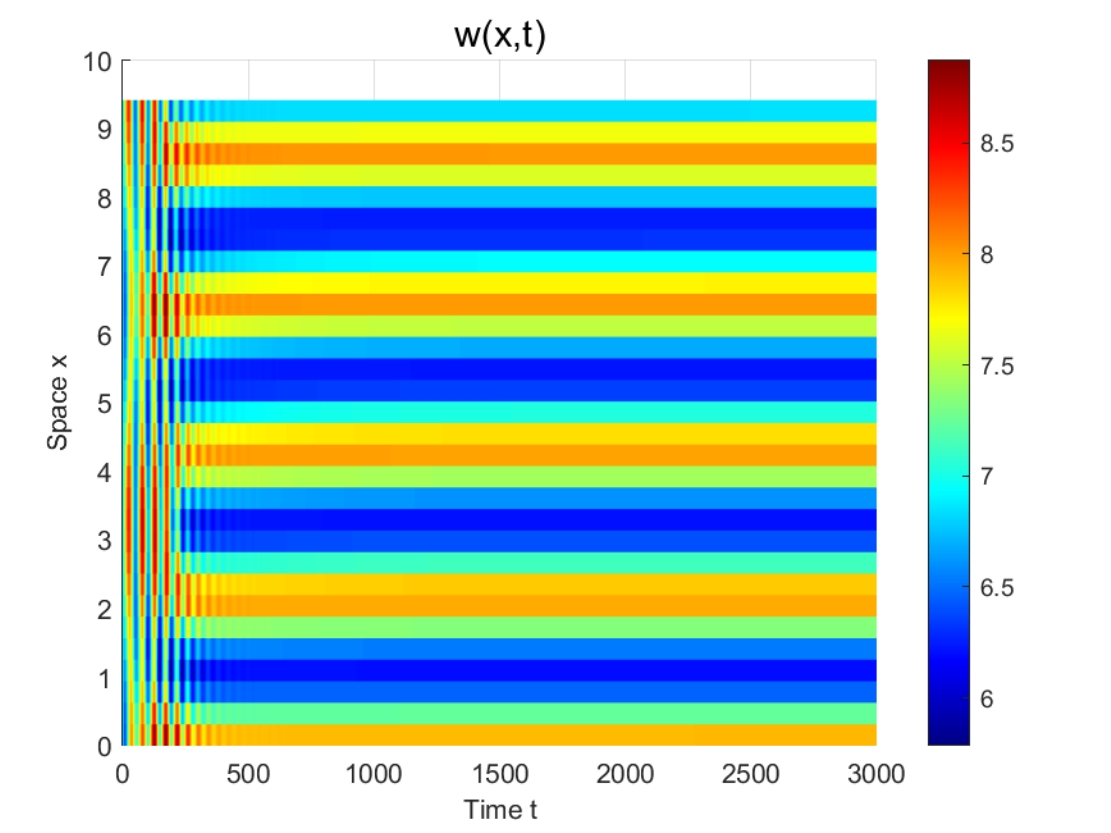}} \hfill
	\subfigure[]{\includegraphics[width=0.49\textwidth]{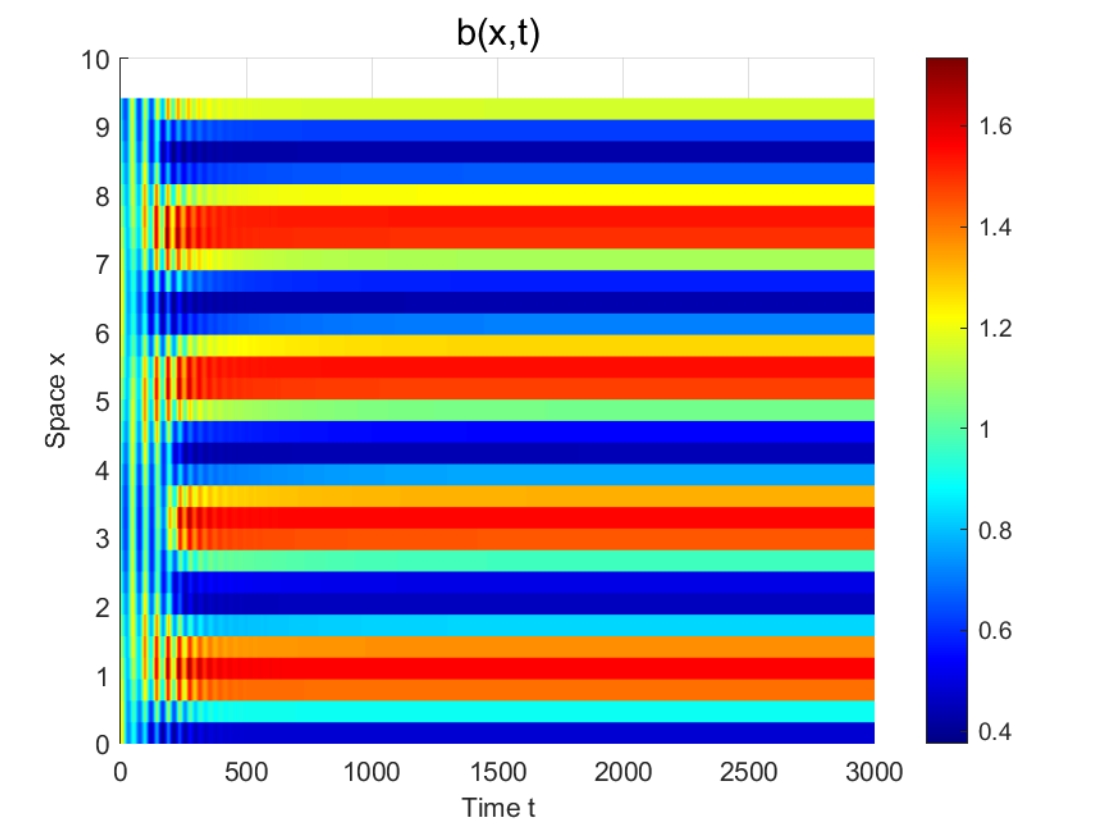}}
	\subfigure[]{\includegraphics[width=0.49\textwidth]{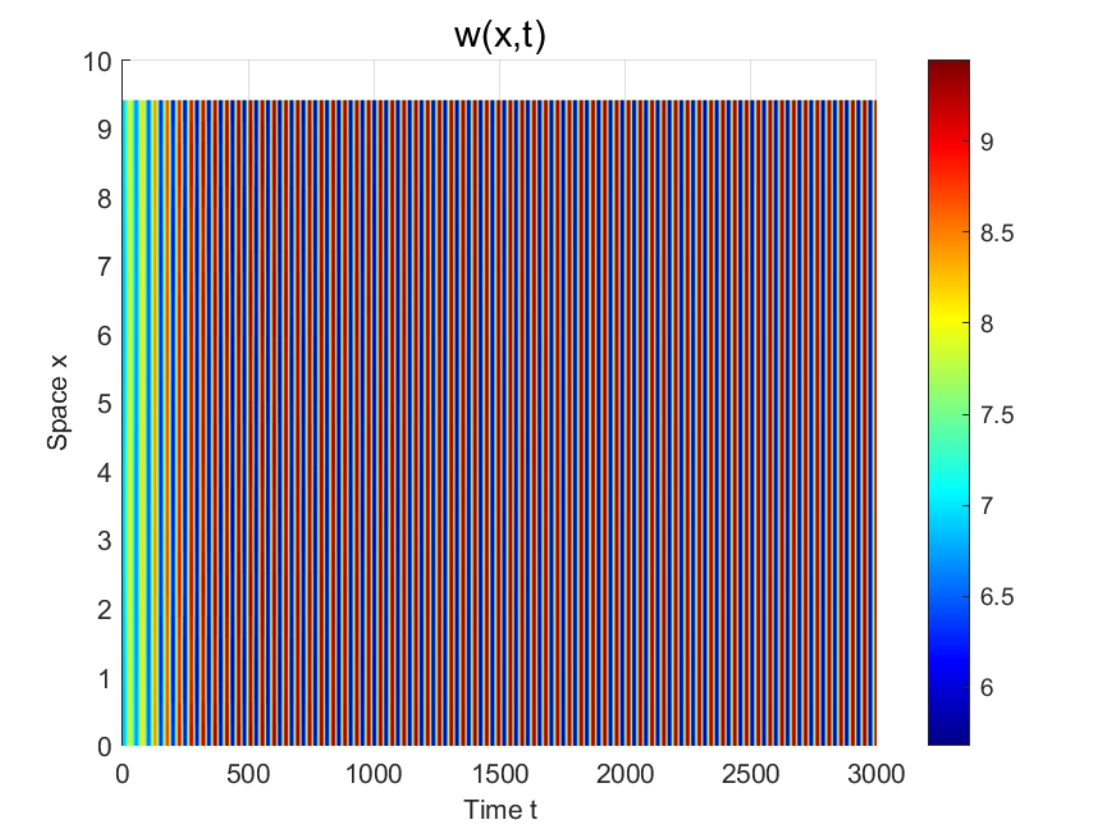}} \hfill
	\subfigure[]{\includegraphics[width=0.49\textwidth]{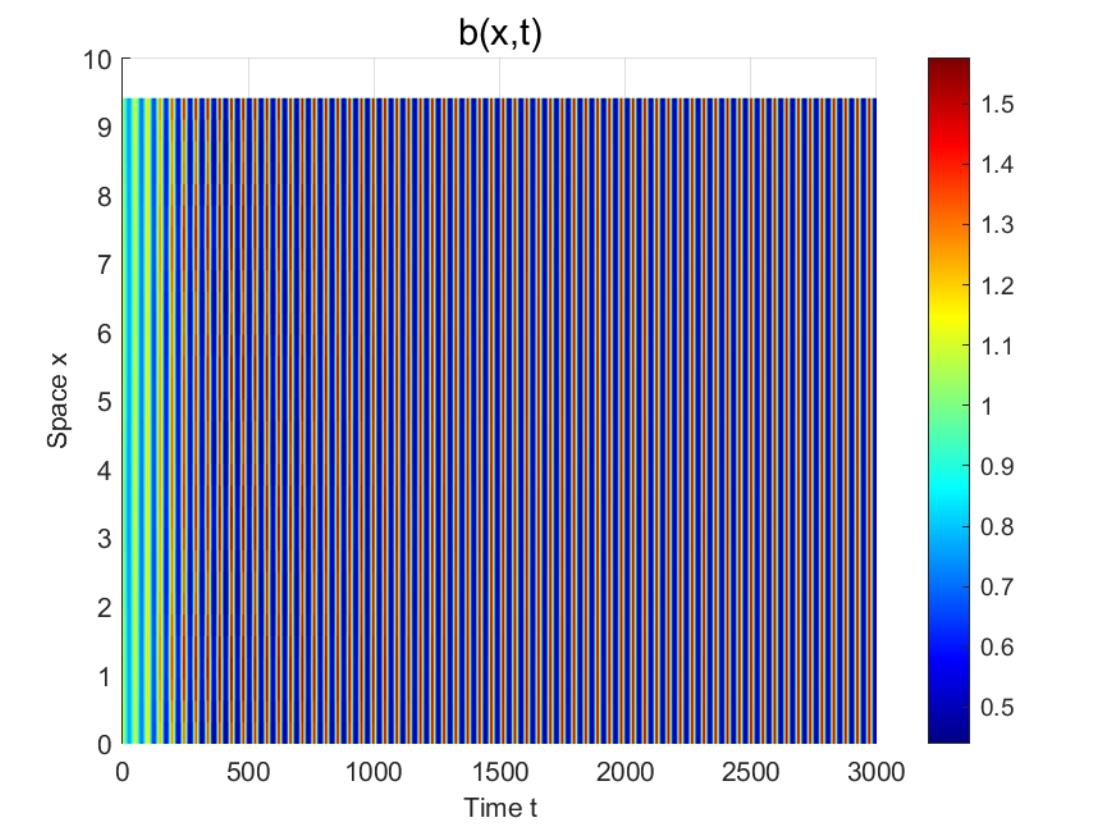}}
	\caption{When $(\varepsilon_1, \varepsilon_2)= (-0.03, 0.01) \in D_4$, 	there exists a stable spatially homogeneous periodic solution and a pair of stable spatially inhomogeneous steady states, as well as a pair of unstable spatially inhomogeneous periodic solutions. The initial values are: in (a) and (b), $w(x, 0) = 6.91998 + 0.1\cos x$, $b(x, 0) = 1.05093 + 0.1\cos x$; in (c) and (d), $w(x, 0) = 6.91998 + 0.01\cos x$, $b(x, 0) = 1.05093 + 0.01\cos x$.}
	\label{fig:5.3.6}
\end{figure}
When $(\varepsilon_1, \varepsilon_2) \in D_4$, system (\ref{eq5.1}) has six equilibria: $\tilde{A}_0, \tilde{A}_1, \tilde{A}_2^{\pm}$ and $\tilde{A}_3^{\pm}$. While $\tilde{A}_1, \tilde{A}_2^+$ and $\tilde{A}_2^-$ are stable, $\tilde{A}_3^+$ and $\tilde{A}_3^-$ are unstable. Consequently, system (\ref{eq1.1}) has a stable spatially homogeneous
periodic solution, a pair of stable spatially inhomogeneous steady states, and a pair of unstable spatially inhomogeneous periodic solutions, as illustrated in Fig. \ref{fig:5.3.6}.

\begin{figure}[!h]
	\centering
	% 两张图片并列
	\subfigure[]{\includegraphics[width=0.49\textwidth]{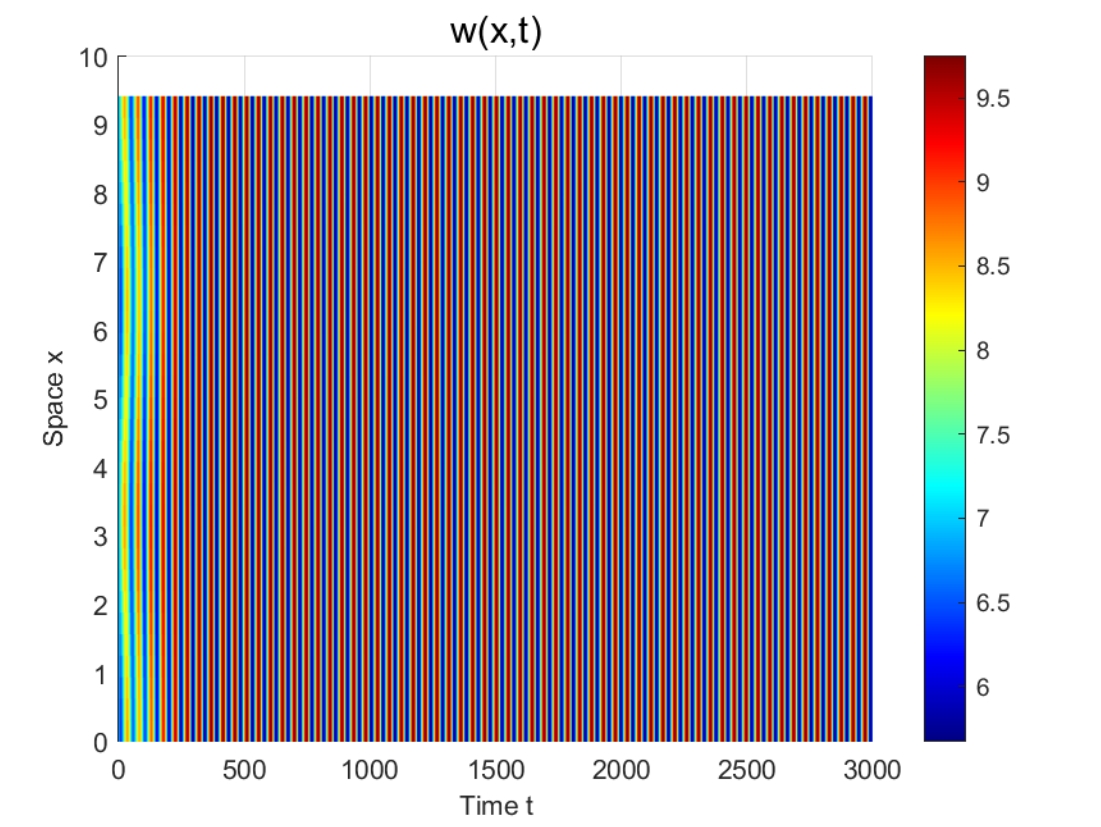}} \hfill
	\subfigure[]{\includegraphics[width=0.49\textwidth]{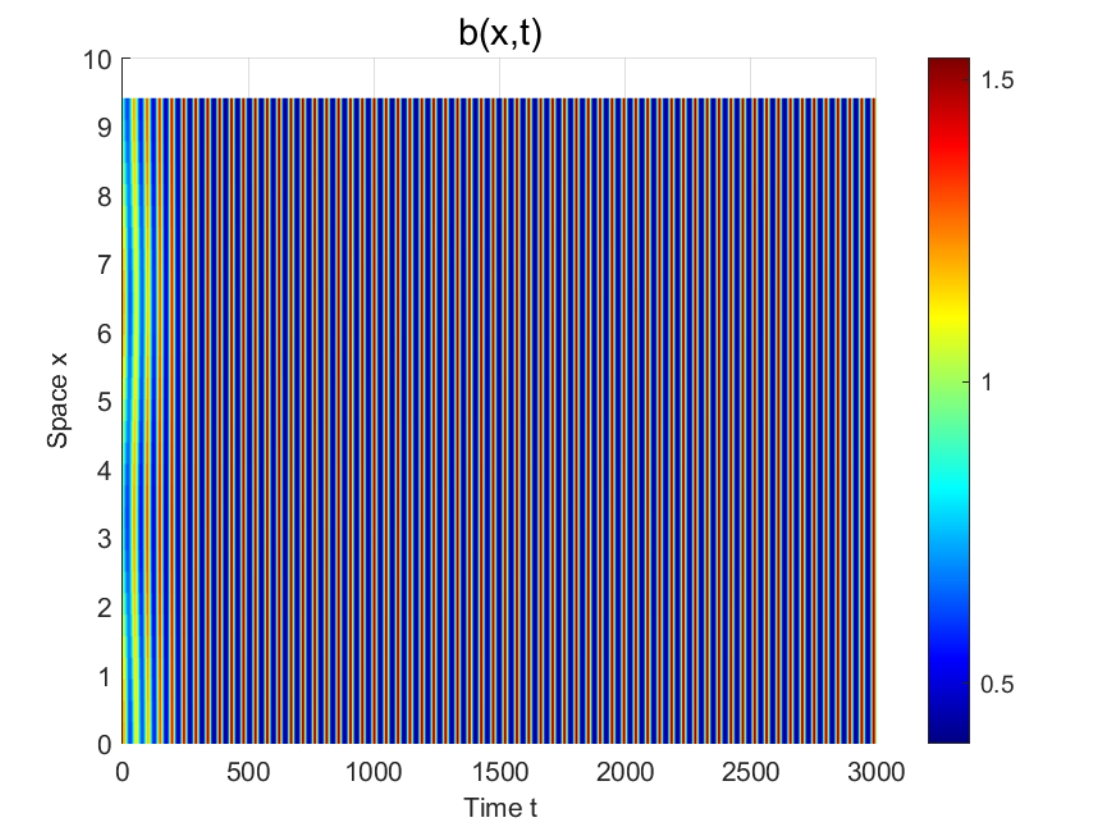}}
	\caption{When $(\varepsilon_1, \varepsilon_2)= (-0.04, -0.005) \in D_5$, the spatially homogeneous periodic solution in system (\ref{eq1.1}) is stable. The initial conditions are set as $w(x, 0)=6.91998+0.1\text{cos} x$ and $b(x, 0)=1.05093+0.1\text{cos} x$.}
	\label{fig:5.3.7}
\end{figure}
When $(\varepsilon_1, \varepsilon_2) \in D_5$, system (\ref{eq5.1}) has four equilibria: $\tilde{A}_0, \tilde{A}_1$ and $\tilde{A}_2^{\pm}$. Since $\tilde{A}_1$ is stable while $\tilde{A}_2^+$ and $\tilde{A}_2^-$ are unstable, system (\ref{eq1.1}) has a stable spatially homogeneous periodic solution, and also has a pair of unstable spatially inhomogeneous steady states, as illustrated in Fig. \ref{fig:5.3.7}.

\begin{figure}[!h]
	\centering
	% 两张图片并列
	\subfigure[]{\includegraphics[width=0.49\textwidth]{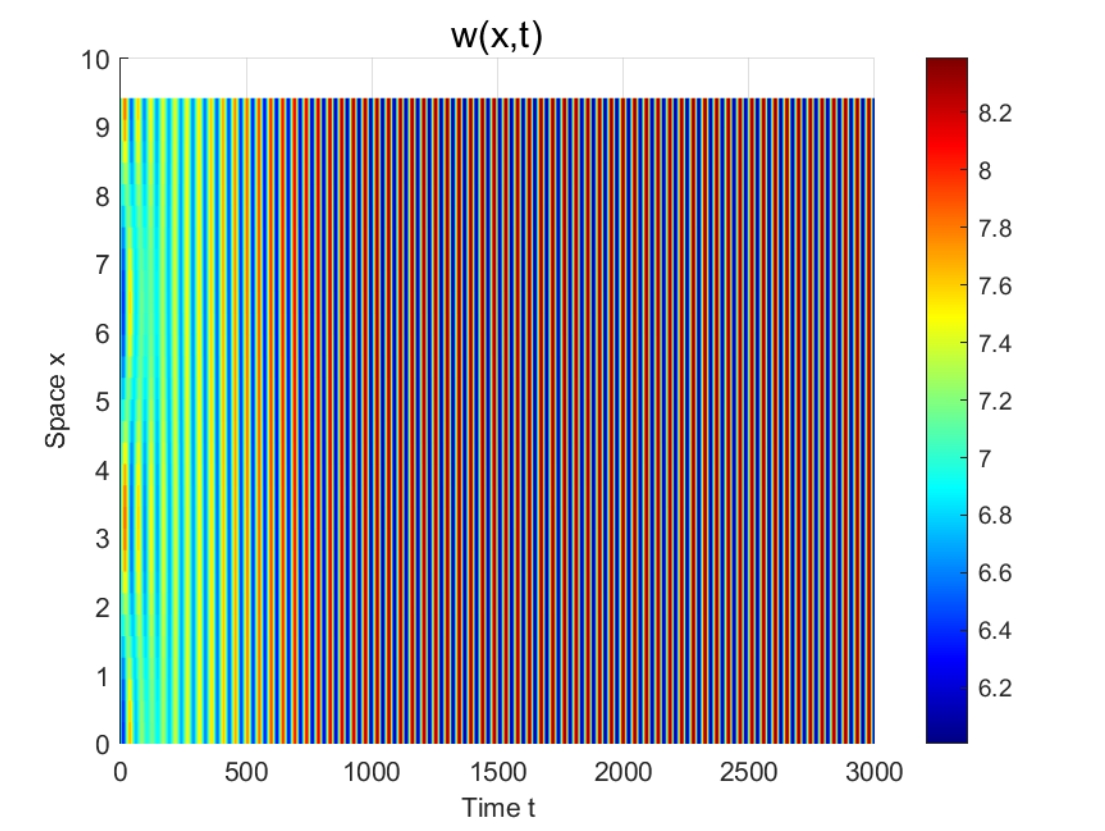}} \hfill
	\subfigure[]{\includegraphics[width=0.49\textwidth]{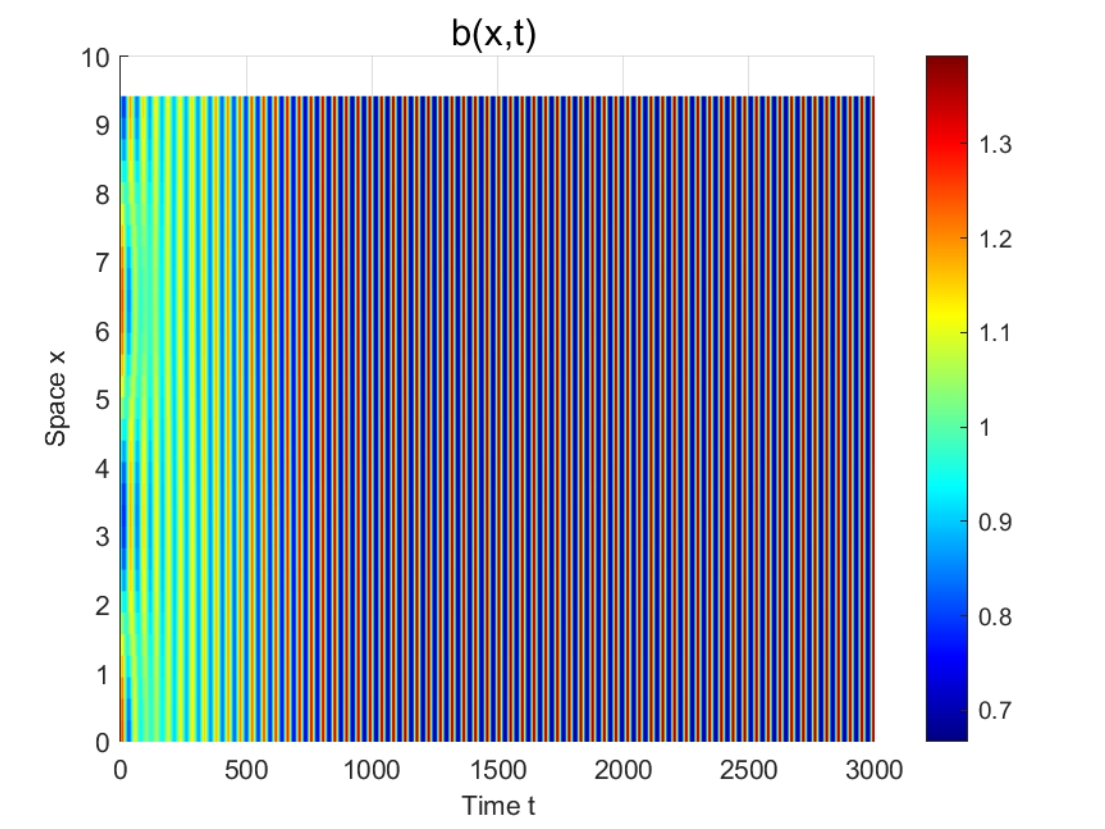}}
	\caption{When $(\varepsilon_1, \varepsilon_2)= (-0.01, -0.005) \in D_6$, the spatially homogeneous periodic solution in system (\ref{eq1.1}) is stable. The initial conditions are set as $w(x, 0)=6.91998+0.1\text{cos} x$ and $b(x, 0)=1.05093+0.1\text{cos} x$.}
	\label{fig:5.3.71}
\end{figure}
When $(\varepsilon_1, \varepsilon_2) \in D_6$, system (\ref{eq5.1}) has an unstable equilibrium $\tilde{A}_0$ and a stable equilibrium $\tilde{A}_1$, which implies that system (\ref{eq1.1}) has a spatially homogeneous periodic solution, as illustrated in Fig. \ref{fig:5.3.71}.

Furthermore, numerical results  show that the spatial distribution of the two substances (the density
of vegetation biomass and the density of water) is in antiphase state, with regions of higher vegetation biomass exhibiting lower water density, and areas of lower vegetation biomass showing higher water density.

\subsection{Effect of soil moisture $\theta_2$  on pattern formation}
In this section, we consider the effect of soil moisture $\theta_2$ on pattern formation. According to Theorem \ref{thm3.3}, system (\ref{eq1.1}) admits pattern formation when $d_1>d_1(R,k_*^2)$ with $k_*^2<\frac{a_{22} l^2}{d_2}$, where $d_1(R,k_*^2)$ is defined as in (\ref{eq3.4}).

\begin{figure}[!h]
	\centering
	% 两张图片并列
	\subfigure[]{\includegraphics[width=0.445\textwidth]{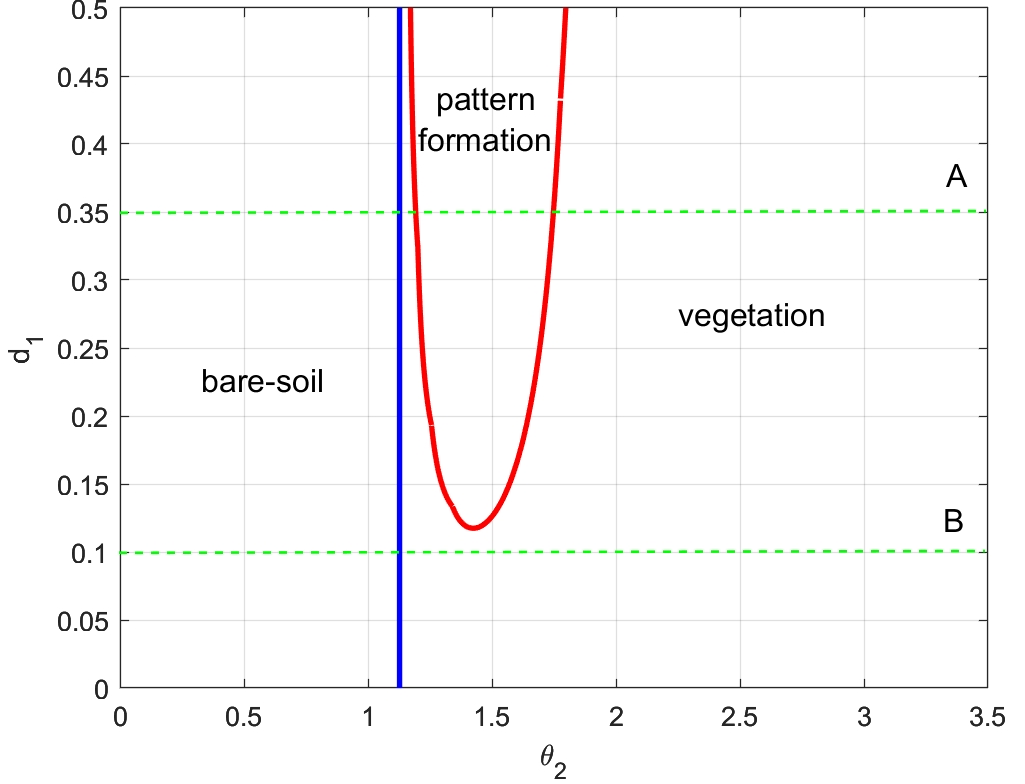}} \hspace{0.5pt}
	\subfigure[]{\includegraphics[width=0.435\textwidth]{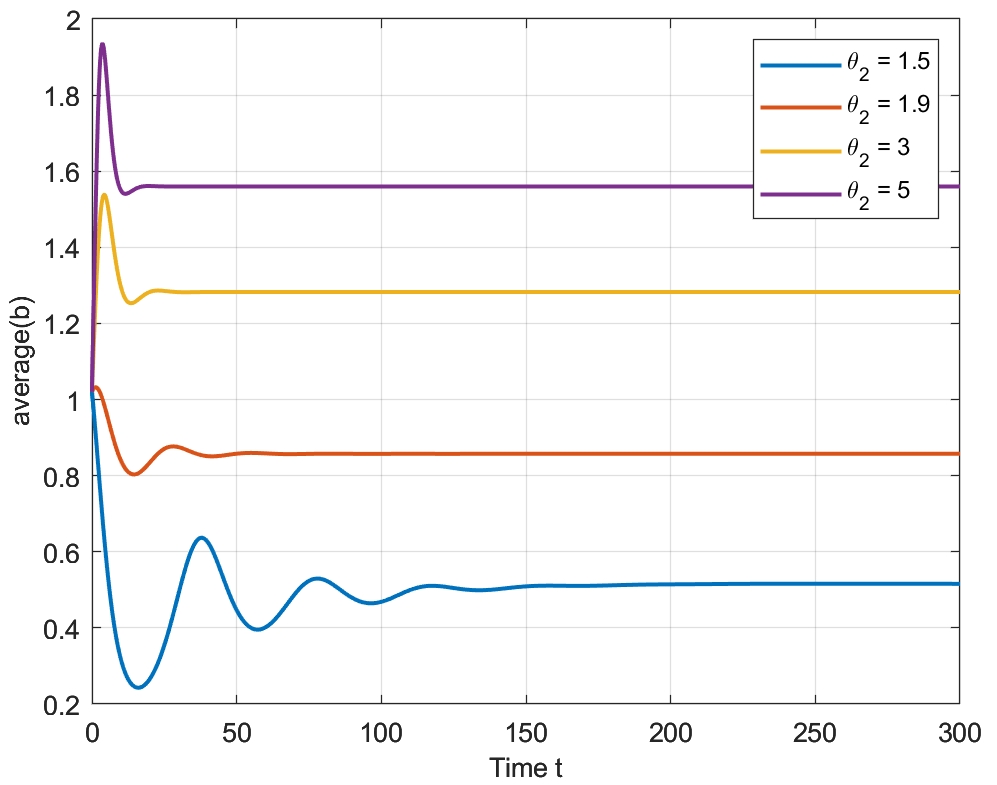}}
	\caption{(a) Effects of soil moisture on the transition between  three states: bare soil, pattern formation, and uniform vegetation,  with $R=0.8,\theta_1=5,d_2=0.01$ and $l=3$. (b) Relationship between soil moisture and the average biomass density of vegetation. }
	\label{fig:5.4.1}
\end{figure}

Note that at the bare-soil equilibrium $E_0=(\frac{R}{a},0)$, we have
$$
\begin{aligned}
	T_k &= -(d_1+d_2)\frac{k^2}{l^2} -a+\rho-\frac{\mu a}{a+\theta_2 R}, \\
	J_k &= d_1 d_2 \frac{k^4}{l^4} - \left[d_1 \left(\rho-\frac{\mu a}{a+\theta_2 R} \right) - d_2 a\right] \frac{k^2}{l^2} -a \left(\rho-\frac{\mu a}{a+\theta_2 R} \right) .
\end{aligned}
$$
Clearly, if $\rho-\frac{\mu a}{a+\theta_2 R}<0$, i.e., $\theta_2<\frac{a(\mu-\rho)}{\rho R}$, then for all $k \geq 0$, we obtain $T_k<0$ and $J_k>0$.
This indicates that $E_0$ is locally stable when $\theta_2<\frac{a(\mu-\rho)}{\rho R}$.
In the following simulations, we demonstrate that pattern formation may occur when $\theta_2>\frac{a(\mu-\rho)}{\rho R}$.

Fix the parameters $R=0.8$ and $\theta_1=5$.  It is straightforward to show that system (\ref{eq2.1}) has no positive equilibrium when $\theta_2<\frac{a(\mu-\rho)}{\rho R}$, while it has a unique, stable positive equilibrium $E_1^*$  when  $\theta_2>\frac{a(\mu-\rho)}{\rho R}$.
For $d_2=0.01$ and $l = 3$, numerical computations reveal the effect of soil moisture $\theta_2$ on pattern formation, as illustrated in Fig. \ref{fig:5.4.1}(a).
Notably, the vertical line $\theta_2=\frac{a(\mu-\rho)}{\rho R}$ divides the $\theta_2-d_1$ parameter plane into two
states where biomass is able to exist or not: the bare-soil equilibrium $E_0$ is locally stable when $\theta_2<\frac{a(\mu-\rho)}{\rho R}$ , and pattern
formation or vegetation exists when $\theta_2>\frac{a(\mu-\rho)}{\rho R}$.
More specifically, pattern formation occurs when $d_1$ exceeds the red curve $d_1(\theta_2)$, while below this curve, no patterns emerge.
Consequently, these boundaries partition the parameter plane into three distinct states: bare soil, pattern formation, and uniform vegetation.
Furthermore, for a lower water diffusion rate $d_1$ (dashed line B), decreasing soil moisture $\theta_2$ results in a direct transition from the vegetation state to the bare-soil state. In contrast, for a higher water diffusion rate $d_1$ (dashed line A), decreasing $\theta_2$ leads to multiple transitions among the vegetation, pattern formation, and bare-soil states. These findings suggest that soil moisture ultimately determines the final state of vegetation.

The relationship between soil moisture and the average biomass density of vegetation is shown in Fig. \ref{fig:5.4.1}(b). The results indicate a positive correlation between average vegetation density and $\theta_2$, suggesting that higher soil moisture levels promote increased vegetation density.

\section{Conclusion}\label{6}

To investigate the effect of soil moisture on the evolution of vegetation ecosystems in arid and semi-arid areas, we establish an improved water-vegetation model. We approach this study from the perspective of Turing patterns, discussing the conditions for diffusion-induced instability and studying the spatiotemporal dynamics of the model near the TH bifurcation point.

Specifically, we first analyze the corresponding ODE system of the model and find that the system may have up to three positive equilibria. Through theoretical analysis and numerical simulations, we uncover a rich bifurcation structure, including forward/backward transcritical bifurcations, saddle-node bifurcations, subcritical/supercritical Hopf bifurcations, and homoclinic bifurcations. Moreover, the system (2.1) exhibits both monostable and bistable dynamics, with a bistable region where bare soil equilibrium coexists with the positive equilibrium, i.e., biomass extinction coexists with biomass “steady-state persistence,” or the positive equilibrium point coexists with a stable limit cycle, or two positive equilibrium points coexist. When the rainfall rate is high, the system is in a higher vegetation equilibrium state; as the rainfall rate drops, the vegetation equilibrium state decreases and may transition into an oscillatory state. When the rainfall rate decreases further, the oscillatory state may contract back to the equilibrium state or may suddenly lead to complete extinction. In both cases, the system will exhibit periodic patterns, which may serve as early indicators of catastrophic transitions.

For the diffusion system, we use the diffusion coefficient of water ($d_1$) and the rainfall rate ($R$) as the bifurcation parameters for Turing and Hopf bifurcations, respectively, and determine the TH bifurcation point through bifurcation analysis. Using the normal form theory of reaction-diffusion equations, we derive the normal form for the TH bifurcation. Through theoretical analysis and numerical simulations, we examine the system's dynamics in different parameter regions and highlight the significant impact of small perturbations in the system parameters on the its stability. Studying the normal form of the TH bifurcation point helps us gain a deeper understanding of the system's dynamics and provides theoretical guidance for addressing potential ecological disasters, such as desertification.

In addition, we also consider the effect of soil moisture ($\theta_2$) on vegetation pattern formation. The study finds that when the diffusion coefficient of water ($d_1$) is small, as $\theta_2$ decreases, a sudden transition from a vegetation state to a bare soil state occurs. However, for larger $d_1$, there are multiple transitions among vegetation state, pattern formation, and bare soil state. This suggests that diffusion may be the primary driving force for vegetation pattern formation, while soil moisture may influence the final state of vegetation evolution.

Finally, we point out that many factors influence vegetation pattern structure. How to establish a model that comprehensively considers various influencing factors remains an important issue worth further exploration. Addressing this problem will help better protect the ecological environment.

\appendix
\section*{Appendix A}
In this section, we provide the calculation formulas for $C_{ijk}, D_{ijk}$ and $E_{ijk}$. Firstly, denote
$$
G_{j_1j_2}=(G_{j_1j_2}^{(1)},G_{j_1j_2}^{(2)})^T,
$$
with
$$
G_{j_1j_2}^{(k)}=\frac{\partial G^{(k)}(0,0,0,0)}{\partial w^{j_1}\partial b^{j_2}},\, k = 1,2,\, j_1 + j_2=2,3.
$$
Then, we can figure out by calculation that
\begin{align*}
	A_{200}&=G_{20} + 2p_{02}G_{11}+p_{02}^2G_{02}=\overline{A}_{020},\\
	A_{002}&=G_{20} + 2p_{k_*2}G_{11}+p_{k_*2}^2G_{02},\\
	A_{110}&=2(G_{20}+2Re(p_{02})G_{11}+|p_{02}|^2G_{02}),\\
	A_{101}&=2(G_{20}+(p_{02}+p_{k_*2})G_{11}+p_{02}p_{k_*2}G_{02})=\overline{A}_{011},\\
	A_{210}&=3(G_{30}+(2p_{02}+\overline{p}_{02})G_{21}+(p_{02}^2 + 2|p_{02}|^2)G_{12}+|p_{02}|^2p_{02}G_{03}),\\
	A_{102}&=3(G_{30}+(p_{02}+2p_{k_*2})G_{21}+(p_{k_*2}^2+2p_{02}p_{k_*2})G_{12}+p_{02}p_{k_*2}^2G_{03}),\\
	A_{111}&=6(G_{30}+(p_{k_*2}+2Re(p_{02}))G_{21}+(|p_{02}|^2 + 2p_{k_*2}Re(p_{02}))G_{12}+|p_{02}|^2p_{k_*2}G_{03}),\\
	A_{003}&=G_{30}+3(p_{k_*2}G_{21}+p_{k_*2}^2G_{12})+p_{k_*2}^3G_{03}.
\end{align*}
Therefore, we can obtain
\begin{align*}
	C_{210}&=\frac{1}{6l\pi}q_0^T A_{210}, \ C_{102}=\frac{1}{6l\pi}q_0^T A_{102},\\
	C_{111}&=\frac{1}{6l\pi}q_{k_*}^T A_{111}, \ C_{003}=\frac{1}{6l\pi}q_{k_*}^T A_{003},
\end{align*}
\begin{align*}
	D_{210}&=\frac{1}{6l\pi i\omega_0}\left[ - (q_0^T A_{200})(q_0^T A_{110})+(q_0^T A_{110})(\overline{q_0^T} A_{110})+\frac{2}{3}(q_0^T A_{020})(\overline{q_0^T} A_{200})\right],\\
	D_{102}&=\frac{1}{6l\pi i\omega_0}\left[ - 2(q_0^T A_{200})(q_0^T A_{002})+(q_0^T A_{110})(\overline{q_0^T} A_{002})+2(q_0^T A_{002})(\overline{q_{k_*}^T} A_{101})\right],\\
	D_{111}&=\frac{1}{6l\pi i\omega_0}\left[ (q_{k_*}^T A_{011})(\overline{q_0^T} A_{110})-(q_{k_*}^T A_{101})(q_0^T A_{110})\right],\\
	D_{003}&=\frac{1}{6l\pi i\omega_0}\left[ (q_{k_*}^T A_{011})(\overline{q_0^T} A_{002})-(q_{k_*}^T A_{101})(q_0^T A_{002})\right],\\
	E_{210}&=\frac{1}{6}q_0^T\left[ S_{yz_1}\langle h_{00110}\rangle+S_{yz_2}\langle h_{00200}\rangle\right],\\
	E_{102}&=\frac{1}{6}q_0^T\left[ S_{yz_1}\langle h_{00002}\rangle+S_{yz_2}\langle h_{k_*0101}\rangle\right],\\
	E_{111}&=\frac{1}{6}q_{k_*}^T\left[ S_{yz_1}\langle h_{0k_*011}\rangle+S_{yz_2}\langle h_{0k_*101}\rangle+S_{yz_3}\langle h_{k_*k_*110}\rangle\right],\\
	E_{003}&=\frac{1}{6}q_{k_*}^T\left[ S_{yz_3}\langle h_{k_*k_*002}\rangle\right],
\end{align*}
with
\begin{align*}
	S_{yz_1}&=\begin{pmatrix}
		(2A_{13}+A_{14}p_{02})|_{\varepsilon_1=0} & (A_{14}+2A_{15}p_{02})|_{\varepsilon_1=0}\\
		(2A_{23}+A_{24}p_{02})|_{\varepsilon_1=0} & (A_{24}+2A_{25}p_{02})|_{\varepsilon_1=0}
	\end{pmatrix},
\end{align*}
\begin{align*}
	S_{yz_2}&=\begin{pmatrix}
		(2A_{13}+A_{14}\overline{p}_{02})|_{\varepsilon_1=0} & (A_{14}+2A_{15}\overline{p}_{02})|_{\varepsilon_1=0}\\
		(2A_{23}+A_{24}\overline{p}_{02})|_{\varepsilon_1=0} & (A_{24}+2A_{25}\overline{p}_{02})|_{\varepsilon_1=0}
	\end{pmatrix},\\
	S_{yz_3}&=\begin{pmatrix}
		(2A_{13}+A_{14}\overline{p}_{k_* 1})|_{\varepsilon_1=0} & (A_{14}+2A_{15}\overline{p}_{k_*1})|_{\varepsilon_1=0}\\
		(2A_{23}+A_{24}\overline{p}_{k_*1})|_{\varepsilon_1=0} & (A_{24}+2A_{25}\overline{p}_{k_*1})|_{\varepsilon_1=0}
	\end{pmatrix},
\end{align*}
and
\begin{align*}
	&h_{00110}=\frac{1}{l\pi}\left\{ -L_0^{-1}A_{110}+\frac{1}{i\omega_0}\left[q_0^T A_{110}p_0+\frac{1}{3}\overline{q_0^T}A_{110}\overline{p_0}\right]\right\},\\
	&h_{00200}=\frac{1}{l\pi}\left\{(2i\omega_0 - L_0)^{-1}A_{200}-\frac{1}{i\omega_0}\left[q_0^T A_{200}p_0+\frac{1}{3}\overline{q_0^T}A_{200}\overline{p_0}\right]\right\},\\
	&h_{00002}=-\frac{1}{l\pi}L_0^{-1}A_{002}+\frac{1}{i\omega_0 l\pi}\left[q_0^T A_{002}p_0-\overline{q_0^T}A_{002}\overline{p_0}\right],\\
	&h_{k_*0101}=\frac{1}{l\pi}\left[i\omega_0+\frac{k_*^2}{l^2}D - L_0\right]^{-1}A_{101}-\frac{1}{i\omega_0 l\pi}q_{k_*}^T A_{101}p_{k_*},\\
	&h_{0k_*011}=\frac{1}{l\pi}\left[-i\omega_0+\frac{k_*^2}{l^2}D - L_0\right]^{-1}A_{011}+\frac{1}{i\omega_0 l\pi}q_{k_*}^T A_{011}p_{k_*},\\
	&h_{k_*k_*002}=\frac{1}{2l\pi}\left[\frac{(2k_*)^2}{l^2}D - L_0\right]^{-1}A_{002}+h_{00002},\\
	&h_{0k_*101}=h_{k_*0101},\ h_{k_*k_*110}=h_{00110}.
\end{align*}

\bibliographystyle{plain}
\bibliography{references} % 指定 .bib 文件的名称

\begin{thebibliography}{10}

\bibitem{an2018turing}
Q.~An and W.H. Jiang.
\newblock Turing-{H}opf bifurcation and spatio-temporal patterns of a
  ratio-dependent {H}olling-{T}anner model with diffusion.
\newblock {\em International Journal of Bifurcation and Chaos}, 28(09):1850108,
  2018.

\bibitem{chen2014using}
T.~Chen, R.~A.~M. de~Jeu, Y.~Y. Liu, G.~R. van~der Werf, and A.~J. Dolman.
\newblock Using satellite based soil moisture to quantify the water driven
  variability in {NDVI}: a case study over mainland {A}ustralia.
\newblock {\em Remote Sensing of Environment}, 140:330--338, 2014.

\bibitem{danielsen2005}
F.~Danielsen, M.~K. S{\o}rensen, M.~F. Olwig, V.~Selvam, F.~Parish, N.~D.
  Burgess, T.~Hiraishi, V.~M. Karunagaran, M.~S. Rasmussen, L.~B. Hansen,
  et~al.
\newblock The {A}sian tsunami: a protective role for coastal vegetation.
\newblock {\em Science}, 310(5748):643--643, 2005.

\bibitem{d2013}
P.~D'Odorico, A.~Bhattachan, K.~F. Davis, S.~Ravi, and C.~W. Runyan.
\newblock Global desertification: drivers and feedbacks.
\newblock {\em Advances in Water Resources}, 51:326--344, 2013.

\bibitem{duan2019spatial}
K.~X. Duan, S.~L. Zhang, C.~Z. Zhao, X.~B. Peng, W.~C. Juan, P.~Yang, Y.~Yang,
  Y.~J. Jiang, and Y.~Luo.
\newblock Spatial pattern of soil moisture and vegetation coverage in {C}ao
  {L}ake wetland.
\newblock {\em Chinese Journal of Ecology}, 38(3):726, 2019.

\bibitem{eigentler2018analysis}
L.~Eigentler and J.~A. Sherratt.
\newblock Analysis of a model for banded vegetation patterns in semi-arid
  environments with nonlocal dispersal.
\newblock {\em Journal of Mathematical Biology}, 77(3):739--763, 2018.

\bibitem{faria1995normal}
T.~Faria and L.~T. Magalh{\~a}es.
\newblock Normal forms for retarded functional differential equations with
  parameters and applications to {H}opf bifurcation.
\newblock {\em Journal of Differential Equations}, 122(2):181--200, 1995.

\bibitem{gallagher2019}
R.~V. Gallagher, S.~Allen, and I.~J. Wright.
\newblock Safety margins and adaptive capacity of vegetation to climate change.
\newblock {\em Scientific Reports}, 9(1):8241, 2019.

\bibitem{gilad2007}
E.~Gilad, J.~von Hardenberg, A.~Provenzale, M.~Shachak, and E.~Meron.
\newblock A mathematical model of plants as ecosystem engineers.
\newblock {\em Journal of Theoretical Biology}, 244(4):680--691, 2007.

\bibitem{guo2024pattern}
G.~H. Guo, Q.~J. Qin, H.~Cao, Y.~F. Jia, and D.~F. Pang.
\newblock Pattern formation of a spatial vegetation system with cross-diffusion
  and nonlocal delay.
\newblock {\em Chaos, Solitons \& Fractals}, 181:114622, 2024.

\bibitem{jiang2020formulation}
W.~H. Jiang, Q.~An, and J.~P. Shi.
\newblock Formulation of the normal form of {T}uring-{H}opf bifurcation in
  partial functional differential equations.
\newblock {\em Journal of Differential Equations}, 268(10):6067--6102, 2020.

\bibitem{jones1994}
C.~G. Jones, J.~H. Lawton, and M.~Shachak.
\newblock Organisms as ecosystem engineers.
\newblock {\em Oikos}, 69(3):373--386, 1994.

\bibitem{klausmeier1999}
C.~A. Klausmeier.
\newblock Regular and irregular patterns in semiarid vegetation.
\newblock {\em Science}, 284(5421):1826--1828, 1999.

\bibitem{Lemordant2018}
L.~Lemordant, P.~Gentine, A.~S. Swann, B.~I. Cook, and J.~Scheff.
\newblock Critical impact of vegetation physiology on the continental
  hydrologic cycle in response to increasing {CO2}.
\newblock {\em Proceedings of the National Academy of Sciences},
  115(16):4093--4098, 2018.

\bibitem{liang2022nonlocal}
J.~Liang, C.~Liu, G.~Q. Sun, L~Li, L.~Zhang, M.~T. Hou, H.~Wang, and Z.~Wang.
\newblock Nonlocal interactions between vegetation induce spatial patterning.
\newblock {\em Applied Mathematics and Computation}, 428:127061, 2022.

\bibitem{liu2025spatiotemporal}
L.~Q. Liu and Y.~Maimaiti.
\newblock Spatiotemporal dynamics of nonlocal water-plant models: insights into
  the mechanisms of vegetation pattern formation.
\newblock {\em Advances in Continuous and Discrete Models}, 2025(1):55, 2025.

\bibitem{liu2023turing}
Y.~Y. Liu and X.~Wei.
\newblock Turing-{H}opf bifurcation analysis of the {S}el'kov-{S}chnakenberg
  system.
\newblock {\em International Journal of Bifurcation and Chaos}, 33(01):2350012,
  2023.

\bibitem{lv2024turing}
Y.~H. Lv.
\newblock Turing-{H}opf bifurcation analysis and normal form in delayed
  diffusive predator-prey system with taxis and fear effect.
\newblock {\em Journal of Applied Mathematics and Computing}, pages 1--41,
  2024.

\bibitem{nega2023investigating}
W.~Nega, W.~Nigussie, and A.~Balew.
\newblock Investigating spatial relationship between vegetation cover and soil
  moisture using sentinel satellites: a case study of {N}orth {G}ondar {Z}one,
  {E}thiopia.
\newblock {\em Journal of Spatial Science}, 68(4):667--683, 2023.

\bibitem{saco2007}
P.~M Saco, G.~R. Willgoose, and G.~R. Hancock.
\newblock Eco-geomorphology of banded vegetation patterns in arid and semi-arid
  regions.
\newblock {\em Hydrology and Earth System Sciences}, 11(6):1717--1730, 2007.

\bibitem{sherratt2005}
J.~A. Sherratt.
\newblock An analysis of vegetation stripe formation in semi-arid landscapes.
\newblock {\em Journal of Mathematical Biology}, 51(2):183--197, 2005.

\bibitem{sherratt2012}
J.~A. Sherratt and A.~D. Synodinos.
\newblock Vegetation patterns and desertification waves in semi-arid
  environments: mathematical models based on local facilitation in plants.
\newblock {\em Discrete and Continuous Dynamical Systems-B}, 17(8):2815--2827,
  2012.

\bibitem{song2019turing}
Y.~L. Song, H.~P. Jiang, and Y.~Yuan.
\newblock Turing-{H}opf bifurcation in the reaction-diffusion system with delay
  and application to a diffusive predator-prey model.
\newblock {\em Journal of Applied Analysis and Computation}, 9(3):1132--1164,
  2019.

\bibitem{song2020spatio}
Y.~L. Song and G.~X. Yang.
\newblock Spatio-temporal dynamics of a reaction-diffusion equation with the
  nonlocal spatial average and delay.
\newblock {\em Applied Mathematics Letters}, 107:106388, 2020.

\bibitem{song2016turing}
Y.~L. Song, T.~H. Zhang, and Y.~H. Peng.
\newblock Turing-{H}opf bifurcation in the reaction-diffusion equations and its
  applications.
\newblock {\em Communications in Nonlinear Science and Numerical Simulation},
  33:229--258, 2016.

\bibitem{song2017spatiotemporal}
Y.L. Song, H.P. Jiang, Q.X. Liu, and Y.~Yuan.
\newblock Spatiotemporal dynamics of the diffusive {M}ussel-{A}lgae model near
  {T}uring-{H}opf bifurcation.
\newblock {\em SIAM Journal on Applied Dynamical Systems}, 16(4):2030--2062,
  2017.

\bibitem{sun2022spatial}
G.~Q. Sun, L.~F. Hou, L.~Li, Z.~Jin, and H.~Wang.
\newblock Spatial dynamics of a vegetation model with uptake-diffusion feedback
  in an arid environment.
\newblock {\em Journal of Mathematical Biology}, 85(5):50, 2022.

\bibitem{sun2022}
G.~Q. Sun, L.~Li, J.~Li, C.~Liu, Y.~P. Wu, S.~P. Gao, Z.~Wang, and G.L. Feng.
\newblock Impacts of climate change on vegetation pattern: mathematical
  modeling and data analysis.
\newblock {\em Physics of Life Reviews}, 43:239--270, 2022.

\bibitem{sun2018}
G.~Q. Sun, C.~H. Wang, L.~L. Chang, Y.~P. Wu, L.~Li, and Z.~Jin.
\newblock Effects of feedback regulation on vegetation patterns in semi-arid
  environments.
\newblock {\em Applied Mathematical Modelling}, 61:200--215, 2018.

\bibitem{tanner1975stability}
J.~T. Tanner.
\newblock The stability and the intrinsic growth rates of prey and predator
  populations.
\newblock {\em Ecology}, 56(4):855--867, 1975.

\bibitem{Turing}
A.~M. Turing.
\newblock The chemical basis of morphogenesis.
\newblock {\em Philosophical Transactions of the Royal Society}, 237:37--72,
  1952.

\bibitem{van2013rise}
S.~van~der Stelt, A.~Doelman, G.~Hek, and J.~D.~M. Rademacher.
\newblock Rise and fall of periodic patterns for a generalized
  {K}lausmeier-{G}ray-{S}cott model.
\newblock {\em Journal of Nonlinear Science}, 23:39--95, 2013.

\bibitem{von2001}
J.~von Hardenberg, E.~Meron, M.~Shachak, and Y.~Zarmi.
\newblock Diversity of vegetation patterns and desertification.
\newblock {\em Physical Review Letters}, 87(19):198101, 2001.

\bibitem{yuan2023}
C.~Wang, H.~Wang, and S.~Yuan.
\newblock Precipitation governing vegetation patterns in an arid or semi-arid
  environment.
\newblock {\em Journal of Mathematical Biology}, 87:22, 2023.

\bibitem{wang1999}
Q.~X. Wang and H.~Takahashi.
\newblock A land surface water deficit model for an arid and semiarid region:
  impact of desertification on the water deficit status in the loess plateau,
  {C}hina.
\newblock {\em Journal of Climate}, 12(1):244--257, 1999.

\bibitem{wang2017interaction}
X.~L. Wang, J.~P. Shi, and G.~H. Zhang.
\newblock Interaction between water and plants: rich dynamics in a simple
  model.
\newblock {\em Discrete and Continuous Dynamical Systems-B}, 22(7):2971--3006,
  2017.

\bibitem{wu2020spatiotemporal}
S.~H. Wu and Y.~L. Song.
\newblock Spatiotemporal dynamics of a diffusive predator-prey model with
  nonlocal effect and delay.
\newblock {\em Communications in Nonlinear Science and Numerical Simulation},
  89:105310, 2020.

\bibitem{xing2024turing}
Y.~Xing and W.~H. Jiang.
\newblock Turing-{H}opf bifurcation and bi-stable spatiotemporal periodic
  orbits in a delayed predator-prey model with predator-taxis.
\newblock {\em Journal of Mathematical Analysis and Applications},
  533(1):127994, 2024.

\bibitem{xue2020spatiotemporal}
Q.~Xue, G.~Q. Sun, C.~Liu, Z.~G. Guo, Z.~Jin, Y.~P. Wu, and G.~L. Feng.
\newblock Spatiotemporal dynamics of a vegetation model with nonlocal delay in
  semi-arid environment.
\newblock {\em Nonlinear Dynamics}, 99:3407--3420, 2020.

\bibitem{yang2014response}
L.~Yang, W.~Wei, L.~D. Chen, W.~L. Chen, and J.~L. Wang.
\newblock Response of temporal variation of soil moisture to vegetation
  restoration in semi-arid {L}oess {P}lateau, {C}hina.
\newblock {\em Catena}, 115:123--133, 2014.

\bibitem{yi2009bifurcation}
F.~Q. Yi, J.~J. Wei, and J.~P. Shi.
\newblock Bifurcation and spatiotemporal patterns in a homogeneous diffusive
  predator-prey system.
\newblock {\em Journal of Differential Equations}, 246(5):1944--1977, 2009.

\bibitem{zelnik2015gradual}
Y.~R. Zelnik, E.~Meron, and G.~Bel.
\newblock Gradual regime shifts in fairy circles.
\newblock {\em Proceedings of the National Academy of Sciences},
  112(40):12327--12331, 2015.

\end{thebibliography}

\end{document}